\definecolor{nota}{rgb}{1,0,0.8}
\newtheorem{thm}{Theorem}[section]
\newtheorem{cor}[thm]{Corollary}
\newtheorem{lemma}[thm]{Lemma}
\newtheorem{prop}[thm]{Proposition}
\newtheorem{defn}[thm]{Definition}
\newtheorem{rem}[thm]{Remark}
\newtheorem{conj}{Conjecture}
\def\N{\mathbb{N}}
\def\R{\mathbb{R}}
\newcommand{\bd}{\partial}
\newcommand{\ep}{\varepsilon}
\def\norma#1#2{\|#1\|_{\lower 4pt \hbox{$\scriptstyle #2$}}}
\newcommand\la{\lambda}
\begin{document}
	
\title{On the quantitative isoperimetric inequality in the plane}
		
\author{Chiara Bianchini}
\author{Gisella Croce} 
\author{Antoine Henrot} 

\address{\emph{C. Bianchini:}  Dipartimento di Matematica ed Informatica ``U.~Dini'',
	Universit\`a di Firenze, Viale Morgagni 67/A, 50134 Firenze, Italy.}
	\email{chiara.bianchini@math.unifi.it}

\address{\emph{G. Croce:} 
Normandie Univ, France; ULH, LMAH, F-76600 Le Havre; FR CNRS 3335, 25 rue Philippe Lebon, 76600 Le Havre, France.}
\email{gisella.croce@univ-lehavre.fr}

\address{\emph{A. Henrot:} Institut \'Elie Cartan de Lorraine UMR CNRS 7502, Universit\'e de Lorraine, BP 70239 54506 Vandoeuvre-les-Nancy Cedex, France.} 
\email{antoine.henrot@univ-lorraine.fr}

\subjclass{(2010) 28A75, 49J45, 49J53, 49Q10, 49Q20}
\keywords{Isoperimetric inequality, quantitative isoperimetric inequality, isoperimetric deficit, 
Fraenkel asymmetry, rearrangement, shape derivative, optimality conditions.}

%%%%%%%%%%%%%%%%%%%%%%%%%%%%%%%%%%%%%%%%%%%5

\maketitle

\begin{abstract}
In this paper we study the quantitative isoperimetric inequality in the plane. We prove the existence of a 
set $\Omega$, different from a ball,  which minimizes the ratio $\delta(\Omega)/\lambda^2(\Omega)$, where 
$\delta$ is the isoperimetric deficit and $\lambda$ the Fraenkel asymmetry, giving a new proof of
the quantitative isoperimetric inequality. Some new properties of the optimal set are also shown.
\end{abstract}

%%%%%%%%%%%%%%%%%%%%%%%%%%%%%%%%%%%%%%%%%%%%%%%%%%%

\section{Introduction}
The last few years have seen several remarkable breakthroughs in the study of quantitative isoperimetric 
inequalities. These are refinements of the classical isoperimetric inequality, since they control the area distance to the ball with a function of the perimeters difference, and can be viewed as stability results. In this paper we will deal with the so called \emph{Fraenkel asymmetry} and the \emph{isoperimetric deficit}. 

Let $\Omega\subset \R^N$ be a  Borel set, with Lebesgue measure $|\Omega|$. Its isoperimetric deficit is defined as
$$
\delta(\Omega)=\frac{P(\Omega)-P(B)}{P(B)}, \quad |B|=|\Omega|,
$$
where $B$ is a ball and $P(\Omega)$ is the perimeter of $\Omega$ in the sense of De Giorgi.
 The isoperimetric inequality guarantees that $\delta(\Omega)$ is positive and null only if $\Omega$ is a ball.
The Fraenkel asymmetry of the set $\Omega$
is defined as
$$
\lambda(\Omega)=\min_{x \in \R^N}\left\{\frac{|\Omega\Delta B_x|}{|\Omega|}, \ |B_x|=|\Omega|\right\},
$$
where $B_x$ is a ball centered at $x$.
It is a natural $L^1$ distance between $\Omega$ and its {\it closest} ball.
The quantitative isoperimetric inequality, first proved by N. Fusco, F. Maggi and A. Pratelli in \cite{FMP}, affirms the existence of a constant $C_N$, depending only on the dimension, such that for every $\Omega \subset \R^N$ one has
\begin{equation}\label{qisop}
\lambda^2(\Omega) \leq C_N \delta(\Omega).
\end{equation}
See \cite{Ci-Le}, \cite{FiMP} and \cite{LZDJ} for alternative proofs of the same result. For more on the surrounding literature, in particular for variants or generalizations of inequality (\ref{qisop}), we refer to \cite{Fuscopreprint} and \cite{Maggibulletin} and the references therein.

\medskip
Despite this recent progress, the optimal value $C^*_N$ of the constant
in (\ref{qisop}) is still not known, even in dimension two. The value of $C^*_N$ can be defined as
\begin{equation}\label{Shop}
\displaystyle \frac{1}{C_N^*}\,=\inf \left\{\frac{\delta(\Omega)}{\lambda^2(\Omega)}\;,\Omega\subset\R^N, \Omega\not= B\right\}\,.
\end{equation} 
This can be seen as a non-standard shape optimization problem,  since the class of admissible domains
is the class of all measurable sets but the balls.
In this paper we prove existence of a minimizer for this problem in the plane:
\begin{thm}\label{thm_existence_minimizer}
There exists  a set $\Omega_0$
which minimizes the shape functional
$$
\mathcal{F}(\Omega)=\frac{\delta(\Omega)}{\lambda^2(\Omega)},
$$ 
among all the subsets of $\R^2$ 
(the ball excluded).
\end{thm}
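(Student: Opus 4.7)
The plan is to run the direct method on a minimizing sequence, with the main care going into ensuring that the limiting set is not a ball. Since $\mathcal{F}$ is invariant under dilations and translations, I take a minimizing sequence $\{\Omega_n\}$, normalize so that $|\Omega_n|=\pi$, and translate each term so that the ball realizing the infimum in the definition of $\lambda(\Omega_n)$ is the unit disk $B$; then $|\Omega_n\Delta B|=\pi\lambda(\Omega_n)\leq 2\pi$. Any explicit non-circular competitor gives a finite upper bound on $\inf\mathcal{F}$, and combined with $\lambda(\Omega_n)\leq 2$ this yields uniform control of $\delta(\Omega_n)$, hence of $P(\Omega_n)$. A standard concentration argument, using the planar isoperimetric inequality to rule out components escaping to infinity (each of which would either break the perimeter bound or inflate $|\Omega_n\Delta B|$), lets me further assume $\Omega_n\subset B_R$ for some fixed $R$.

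By the compactness theorem for sets of finite perimeter I extract a subsequence with $\chi_{\Omega_n}\to \chi_{\Omega_0}$ in $L^1(\R^2)$ and $|\Omega_0|=\pi$. Lower semicontinuity of perimeter yields $\delta(\Omega_0)\leq \liminf_n \delta(\Omega_n)$, while the Fraenkel asymmetry is upper semicontinuous under $L^1$-convergence of equimeasurable sets: testing the infimum defining $\lambda(\Omega_n)$ with the optimal centre for $\Omega_0$ gives $\limsup_n \lambda(\Omega_n)\leq \lambda(\Omega_0)$. Combining these two facts, provided $\lambda(\Omega_0)>0$, one obtains
\[
\mathcal{F}(\Omega_0) \ \leq\ \liminf_n \mathcal{F}(\Omega_n)\ =\ \inf\mathcal{F},
\]
so $\Omega_0$ is a minimizer.

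The crux of the proof is thus to exclude the degenerate scenario $\Omega_0=B$. Here I would carry out a Fuglede-type analysis: for $\Omega$ in a small $L^1$-neighborhood of $B$, parametrizing $\partial\Omega$ as a normal graph $u$ over $\partial B$ and using the quadratic expansion $\delta(\Omega)\gtrsim \|u\|_{H^{1/2}}^2$ together with the linear upper bound $\lambda(\Omega)\lesssim \|u\|_{L^1}$---after correcting $u$ for the barycentre in order to absorb the translation freedom in the infimum defining $\lambda$---produces an explicit constant $\gamma_0>0$ with $\mathcal{F}(\Omega)\geq \gamma_0$ throughout this neighborhood. Exhibiting a single global competitor $\Omega^*$ with $\mathcal{F}(\Omega^*)<\gamma_0$, for instance a stadium or a suitable mass-preserving deformation of $B$ whose $\delta$ and $\lambda$ can be computed in closed form, then forces the minimizing sequence to stay at positive $L^1$-distance from $B$, so $\lambda(\Omega_0)>0$ and the theorem follows. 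This local/global comparison---executing the Fuglede expansion with the barycentre correction on one side, and producing an explicit competitor strictly below the resulting threshold on the other---is the main obstacle I anticipate, the rest being essentially a routine application of the compactness and semicontinuity machinery for sets of finite perimeter.
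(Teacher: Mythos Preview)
Your overall architecture---direct method, with the crux being that a minimizing sequence cannot converge to the ball---matches the paper's, and the compactness/semicontinuity steps are indeed routine (your upper semicontinuity of $\lambda$ suffices, though the paper actually proves continuity of $\lambda$ under $L^1$ convergence).

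The genuine gap is in how you rule out the ball. A Fuglede expansion requires $\partial\Omega$ to be a normal graph over $\partial B$, but a set in a small $L^1$-neighborhood of $B$ need not have this structure: it may have many connected components, thin filaments, or pieces of boundary far from $\partial B$. Reducing from arbitrary $L^1$-close sets to nearly spherical ones is itself the substantive step---this is precisely what the Cicalese--Leonardi selection principle accomplishes, and what the paper replaces by a new circular rearrangement (Definition~\ref{rearrangement}): any $\Omega$ transversal to its optimal ball is sent to a set $\Omega^*$ bounded by four arcs of circle, Proposition~\ref{prop_simmetrizzazione} shows this asymptotically decreases $\mathcal{F}$, and Theorem~\ref{casABCD} then computes explicitly that $\liminf\mathcal{F}(\Omega_\varepsilon^*)\geq \pi/(8(4-\pi))\approx 0.457$. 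The competitor beating this threshold is the optimal stadium of Theorem~\ref{thmAFN}, for which $\mathcal{F}\approx 0.406$. Your proposal does not supply any such reduction, so the Fuglede bound---even if you carried out the barycentre correction and made $\gamma_0$ explicit---would not apply to the minimizing sequence at hand. A secondary looseness: the ``standard concentration argument'' for confining $\Omega_n$ in a fixed ball is too quick, since small components can drift to infinity without violating the perimeter bound, and translating them back can \emph{decrease} $\lambda$ (hence increase $\mathcal{F}$); the paper handles this by first modifying the sequence to have at most seven connected components (Proposition~\ref{perimeter}), which are then packed into a fixed rectangle.
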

This existence result in fact provides a new proof of the quantitative isoperimetric inequality in the plane.
%We also present some qualitative properties of the optimal set $\Omega_0$. In particular we prove that it has
%{\it at least} two optimal balls realizing the Fraenkel asymmetry and we give a bound on the number of its
%connected components, see Theorem \ref{thmqualit}.

\medskip
In \cite{HH}, \cite{HHW}, \cite{Ca}, \cite{AFN} (see also \cite{AFN2} and \cite{Ca2}), the minimization problem (\ref{Shop}) was studied in the restricted class of planar convex sets. In particular, one may find the following result in \cite{AFN}. 
\begin{thm}[\cite{AFN}]\label{thmAFN}
Let $\mathcal{C}$ be the class of planar convex sets; then
$$
\inf_{\Omega\in\mathcal{C}} \mathcal{F}(\Omega)=0.405585\,,
$$
and the minimum is attained at a particular ``stadium''.
\end{thm}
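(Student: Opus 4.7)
The plan is to combine Blaschke compactness in the convex class, shape-derivative optimality conditions, and a one-parameter minimization among stadia. Normalizing $|\Omega|=\pi$ by scale invariance of $\mathcal{F}$, any minimizing sequence $\{\Omega_n\}\subset\mathcal{C}$ has $\mathcal{F}(\Omega_n)$ bounded, hence $P(\Omega_n)$ bounded (using $\lambda\le 2$) and, by convexity, bounded diameter. Blaschke's selection theorem yields a Hausdorff-convergent subsequence with limit $\Omega_0$ convex of area $\pi$. On convex sets, both $\delta$ and $\lambda$ are Hausdorff-continuous (the optimal Fraenkel ball center ranges in a compact set and is extracted by compactness), so $\mathcal{F}(\Omega_n)\to\mathcal{F}(\Omega_0)$ provided $\Omega_0\neq B$. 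The case $\Omega_0=B$ is excluded by Fuglede's nearly-spherical estimate, which supplies a constant $C_{\mathrm{F}}$ with $\mathcal{F}\geq 1/C_{\mathrm{F}}$ for convex domains close to a ball; it then suffices to exhibit one convex set (a moderately elongated stadium works) with $\mathcal{F}<1/C_{\mathrm{F}}$.

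Next I write the first-order optimality conditions. Volume-preserving smooth normal perturbations $\partial\Omega_0\mapsto\partial\Omega_0+\varepsilon V\nu$ yield, at leading order, the shape derivative of $P$ (the integral of $\kappa V$) and of $|\Omega\Delta B_{x_0}|$, with $x_0$ the optimal center, stationary at first order by the envelope theorem. Cancellation for all admissible $V$ produces the Euler--Lagrange equation
\[
\kappa(x)=\Lambda_1+\Lambda_2\,\sigma(x)\qquad\text{on }\partial\Omega_0,
\]
with $\sigma(x)=+1$ on $\partial\Omega_0\cap(\Omega_0\setminus B_{x_0})$ and $\sigma(x)=-1$ on $\partial\Omega_0\cap(B_{x_0}\setminus\Omega_0)$, so $\partial\Omega_0$ is piecewise of constant curvature taking at most two values. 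Steiner symmetrizations along any axis through $x_0$ do not increase $\mathcal{F}$ in the convex class, so I may assume $\Omega_0$ is symmetric across two orthogonal axes. Convexity together with the $C^1$-gluing of arcs of two different curvatures then forces the smaller curvature to vanish, so $\Omega_0$ is the convex hull of two equal discs, i.e.\ a stadium.

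It remains to minimize $\mathcal{F}$ over the one-parameter family of stadia. Writing the stadium as the Minkowski sum of a segment of length $2a$ with the unit disc, I have $|\Omega|=\pi+4a$, $P=2\pi+4a$, $R=\sqrt{1+4a/\pi}$, and
\[
\delta(a)=\frac{\pi+2a}{\pi R}-1.
\]
The intersection $|\Omega\cap B_{x_0}|$, with $B_{x_0}$ the equi-area ball centered at the center of the stadium, admits a closed-form expression by circular-segment formulas after a short case analysis on $R$ versus $1$ and $\sqrt{1+a^2}$, yielding $\lambda(a)$. Minimizing the explicit ratio $\delta(a)/\lambda^2(a)$ over $a>0$ reduces to a single transcendental equation whose unique root $a^\ast$ gives numerically $\mathcal{F}(\Omega_0)=0.405585$.

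The delicate step is the passage from the pointwise Euler--Lagrange equation to the identification of $\Omega_0$ as a stadium. The optimal Fraenkel ball may be non-unique, $\sigma$ is only measurable a priori, and circular arcs of two different curvatures could in principle be glued in many combinatorial patterns. One must therefore show, via contact-set analysis of $\partial\Omega_0\cap\partial B_{x_0}$, symmetrization, and convexity, that $\sigma$ has finitely many sign changes and that the resulting convex body, bounded by arcs of at most two curvatures, is forced to be a stadium rather than some other piecewise-circular convex shape.
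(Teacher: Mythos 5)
This theorem is cited in the paper from \cite{AFN} and is not proved here; the paper uses it as a known input (to rule out minimizing sequences converging to a ball, once Corollary~\ref{corfond} gives a lower bound $\pi/(8(4-\pi))\approx0.45$ strictly above $0.4056$). So there is no ``paper's own proof'' to compare with; you have attempted to reconstruct the Alvino--Ferone--Nitsch argument from scratch, and the outline is a plausible shape-optimization scheme, but it has two genuine gaps.

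First, the claim that ``Steiner symmetrizations along any axis through $x_0$ do not increase $\mathcal{F}$ in the convex class'' is unjustified and, as stated, almost certainly false. Steiner symmetrization decreases the perimeter (hence $\delta$), but it also tends to decrease the Fraenkel asymmetry $\lambda$, so the ratio $\delta/\lambda^2$ may increase. This is precisely why the present paper does not use Steiner symmetrization: it introduces the special ``pillow'' rearrangement of Definition~\ref{rearrangement} tailored to conserve $\lambda$, and Cicalese--Leonardi in \cite{CiLe2} rely on Bonnesen's annular symmetrization, which preserves the inradius/circumradius structure. Your reduction to two orthogonal axes of symmetry therefore needs a different tool; without it, the subsequent identification of the optimizer as a stadium is unsupported.

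Second, the exclusion of $\Omega_0=B$ via ``Fuglede's nearly-spherical estimate'' requires more than a qualitative Fuglede bound: you need a quantitative constant $1/C_{\mathrm F}$ that strictly exceeds $0.4056$. The relevant sharp value near the ball (even in the convex class) is $\pi/(8(4-\pi))\approx 0.4476$, as established here in Theorem~\ref{casABCD} and Theorem~\ref{minimizing_sequences_general} and in \cite{CiLe2} via ovals; a generic Fuglede constant is not sharp and may fall below the stadium value, in which case the comparison fails. Finally, you acknowledge yourself that the passage from the Euler--Lagrange condition $\kappa=\Lambda_1+\Lambda_2\sigma$ to ``$\Omega_0$ is a stadium'' is incomplete: a convex body bounded by arcs of two curvatures could be an oval, a lens, or a more elaborate piecewise-circular shape, and ruling these out needs a genuine combinatorial/contact-set analysis (analogous to what this paper does in Section~\ref{subsection_number_optimal_balls} for its optimal, non-convex set). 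As written, the proposal identifies the correct candidate and reproduces the correct value $0.405585$, but the structural steps reducing to a stadium are not established.
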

This optimal stadium will be useful for us in excluding possible minimizing sequences
converging to the ball.
In  \cite{CiLe2}, Cicalese and Leonardi addressed the same question, among all the subsets of $\R^N$,  by considering
the functional $\widehat{\mathcal{F}}(\Omega)$ (extended by relaxation to the ball $B$) defined by:
$$\widehat{\mathcal{F}}(\Omega)=\left\lbrace\begin{array}{lc}
\mathcal{F}(\Omega) & \mbox{if $\Omega\not= B$},\\
\inf \{\liminf \mathcal{F}(\Omega_n), \lambda(\Omega_n)>0, |\Omega_n\Delta B|\to 0\} & \mbox{if $\Omega=B$}.
\end{array}\right.
$$
By using an \emph{iterative selection principle} and by 
applying Bonnesen's annular symmetrization, they showed that a minimizing sequence for the above infimum is 
made up of \emph{ovals}, that is, $C^1$ convex sets, with two 
orthogonal axes of symmetry, whose boundary is the union of two congruent arcs of circle. 
They proved this result using properties of this family of sets established in \cite{Ca} and \cite{AFN}.
  
In this paper we present a different approach based on a new kind of symmetrization. We replace any set $\Omega$ by a new set $\Omega^*$ having two orthogonal axes of symmetry and whose boundary is composed by four arcs of circle. This is done in such a way that the areas of $B\setminus \Omega$ and $\Omega\setminus B$, where $B$ is an optimal ball, are conserved. 
The key point in our construction is Proposition \ref{prop_simmetrizzazione} where we prove that, for sets converging to the ball, this symmetrization decreases the functional $\mathcal{F}$ asymptotically.
It suffices then to explicitly compute  $\mathcal{F}$ for this family of sets and to prove that the limit of the 
symmetrized minimizing sequence  is greater than the quantity $\frac{\pi}{8(4-\pi)}$. 
This is done in Theorem \ref{casABCD}, Section \ref{sec2}.
Finally, Theorem \ref{thmAFN} shows that any sequence converging to the ball is excluded from being a minimizing sequence for $\mathcal{F}$, thereby concluding the argument.

In Section \ref{sec3}, we prove the existence result, Theorem \ref{thm_existence_minimizer}. 
Once we know that a minimizing sequence $\{\Omega_n\}$ cannot converge to the ball, we need to prove that it is
uniformly bounded in a sufficiently large set $R$.
This is done by proving that we can replace the sequence $\{\Omega_n\}$ by a new sequence
with a fixed finite number of connected components, see  Proposition \ref{perimeter}. We conclude
in a classical way, using the compact embedding $BV(R)\hookrightarrow L^1(R)$.

In Section \ref{sec4} we establish some qualitative properties of the optimal set. 
Our main contribution is the proof that the optimal set has at least two balls realizing the Fraenkel asymmetry. The case by case proof we provide is lengthy, but rather simple. We also give a bound on the number of connected components of an optimal set in Theorem \ref{thmqualit}.

In the last section, we present in detail a possible (non convex) candidate for the optimal set for which the value of the functional $\mathcal{F}$ approximatively equals 0.3931. This would give a value of 2.5436 for the optimal constant $C_2^*$. 
Notice that the same value $C^*_2$ was conjectured in \cite{CiLe2}, where it was also shown that it corresponds to  a certain shape referred to as a \emph{mask}. We will keep this denomination. We also explain what remains to show that the mask is an optimal set for $\mathcal{F}$.

%%%%%%%%%%%%%%%%%%%%%%%%%%%%%%%
%%%%%%%%%%%%%%%%%%%%%%%%%%%%%%%
%%%%%%%%%%%%%%%%%%%%%%%%%%%%%%%

\section{Sequences converging to a ball and a new rearrangement}\label{sec2}
Since the class of admissible domains for $\mathcal{F}$ is composed of any domains but the balls, a convergent minimizing sequence
has two possible behaviours:
 either it converges to a ball
 or it converges to a domain $\Omega_0$ different to a ball and this will provide
a minimizer of $\mathcal{F}.$
Notice that, since we are dealing with the Fraenkel asymmetry, the convergences of sets that we consider are convergences of the measure of the symmetric difference to zero.

In this section we are going to study the behaviour of sequences of sets converging to a ball. 
To do that, we will define a new kind of symmetrization of sets which turns out
to decrease (at least asymptotically) the functional $\mathcal{F}$. Since our functional is scale invariant,
without loss of generality, we can fix the area of admissible domains equal to $\pi$, if not differently stated.

In the sequel a special class of sets, named \emph{transversal}, will be useful for our pourposes.
\begin{defn}\label{transversal}
	We say that a set $\Omega$ is transversal to a ball $B$ if the number of intersection points $M_i\equiv(x_i,y_i), i\in \{1,...,2p\}$, between  $\partial \Omega$ and $\partial B$ is finite.
\end{defn}

\begin{defn}\label{rearrangement}
Let $\Omega$ be a planar set and let $B$ be a ball such that $|\Omega|=|B|$ and $\Omega$ transversal to $B$. We define the rearranged set $\Omega^*$ as follows.
Let
$$
\Omega^{OUT}=\Omega \setminus B
\,,\,\,\,\,\,\,\,\,\Omega^{IN}= B \setminus \Omega\,.
$$
Let us now consider the parts of the circle which bound $\Omega^{IN}$ and $\Omega^{OUT}$.
Let $\gamma^{IN}=\mathcal{H}^1(\partial \Omega^{IN}\setminus \partial \Omega)$
and $\gamma^{OUT}=\mathcal{H}^1(\partial \Omega^{OUT}\setminus \partial \Omega)=\mathcal{H}^1(\partial B)-\gamma^{IN}$.

Let $A_1, A_2, A_3, A_4$ be four points on  $\partial B$  defined as follows. The length of the arcs of circle   $\sigma_{12}$ and $\sigma_{34}$, with endpoints
 $A_1, A_2$, and $A_3, A_4$, respectively, equals $\gamma^{OUT}/2$.
 The length of the arcs of circle  $\sigma_{23}$ and $\sigma_{41}$, with endpoints
 $A_2, A_3$, and $A_4, A_1$, respectively, equals $\gamma^{IN}/2$.
We next  consider another arc of circle, $\tilde{\sigma}_{12}$, with endpoints $A_1, A_2$, 
outside $B$, such that the measure of the surface $a_{12}$ between
$\tilde{\sigma}_{12}$ and
$\sigma_{12}$ is equal to $|\Omega^{OUT}|/2$.
In an analogous way we define $\tilde{\sigma}_{34}$.
We consider an arc of circle $\tilde{\sigma}_{23}$ with endpoints
 $A_2, A_3$,
inside $B$, such that the measure of the surface $a_{23}$ between
$\tilde{\sigma}_{23}$ and 
$\sigma_{23}$ is equal to  $|\Omega^{IN}|/2$.
We define $\tilde{\sigma}_{41}$ in an analogous way.

$\Omega^*$ is the set whose boundary is the union of the arcs $\tilde{\sigma}_{12}, \tilde{\sigma}_{23}, \tilde{\sigma}_{34}, \tilde{\sigma}_{41}$.
\end{defn}

\begin{rem}
The previous rearrangement can be extended to non-transversal sets in a natural way. In this general case, the boundary
of $\Omega^*$ will contain 4 congruent arcs of the boundary of the ball with length $(2\pi-\gamma^{IN}-\gamma^{OUT})/4$ each.
We point out that we will use this rearrangement
for domains $\Omega$
such that $|\Omega \Delta B|$ will be small enough, in such a way that the above construction is always possible.
\end{rem}

% % % % % % % % % % % % % % % % % % % % % %
\newcommand{\omegain}{(-50:5)arc(-30:30:7.6)} 
\newcommand{\omegainn}{(130:5)arc(150:210:7.6)}
\newcommand{\disegnoomega}{(0:5.5)to[out=90,in=-30](30:4.3)to[out=150,in=0](90:6.5)to[out=180,in=45](135:4.3)to[out=225,in=90](180:5.5)to[out=270,in=210](240:4.6)to[out=30,in=150](270:4.5)to[out=-30,in=210](320:4.7)to[out=30,in=270](0:5.5)}
\newcommand{\pillow}{
	\fill[opacity=0.7, gray] (50:5)arc(20:160:3.42);\fill[opacity=0.7,gray](230:5)arc(200:340:3.42);
	\fill[white](0,0)circle(5);
	\fill[gray!20] (-50:5)arc(-50:50:5); \fill[gray!20](130:5)arc(130:230:5);
	%\fill[blue!10] (150:5)arc(150:210:5);
	\begin{scope}
		\clip (0,0)circle(5);
		\fill[white] \omegain;
	\end{scope}
	\begin{scope}
		\clip (0,0)circle(5);
		\fill[white] \omegainn;
	\end{scope}
	%	\fill[gray!20] (0,3)--(0,4)arc(90:135:1);\draw (0,2.7)++(110:2)node{\footnotesize$\theta_1$};
	%	\fill[gray!50] (0,0)--(0,1.5)arc(90:110:1.5);\draw[<-] (100:1.5)--(130:3)node[left]{\footnotesize$\eta_1$};
	%	\fill[gray!20] (-2.5,0)--(-1.5,0)arc(0:20:1);\draw[<-] (-2,0)--(-2,-1)node[below]{\footnotesize$\theta_2$};
	%	\fill[gray!50] (0,0)--(1.5,0)arc(0:30:1.5);\draw (10:2)node{\footnotesize$\eta_2$};
	\draw (0,0) circle(5); 
	\draw[blue] (120:6.5)node{$\Omega^*$};
	\draw[thick, blue] (50:5)arc(20:160:3.42);
	\draw[thick, blue] (-50:5)arc(-30:30:7.6); 
	\draw[thick,blue](130:5)arc(150:210:7.6); 
	\draw[thick, blue] (230:5)arc(200:340:3.42);
	\fill (50:5)node[right]{$A_2$}circle(2pt);\fill(130:5)node[left]{$A_1$}circle(2pt);\fill (230:5)node[left]{$A_4$}circle(2pt);\fill (-50:5)node[right]{$A_3$}circle(2pt);
	\draw (170:6)node{$\sigma_{41}$};\draw (260:4)node{$\sigma_{34}$};\draw (10:6)node{$\sigma_{23}$};\draw (100:4)node{$\sigma_{12}$};
	\draw[blue] (190:3)node{$\widetilde{\sigma}_{41}$};\draw[blue] (285:7)node{$\widetilde{\sigma}_{34}$};\draw[blue] (-10:3)node{$\widetilde{\sigma}_{23}$};\draw[blue] (80:7)node{$\widetilde{\sigma}_{12}$};
	\begin{scope}[xshift=-6cm]
		\fill[opacity=0.7, gray] \disegnoomega;
		\fill[gray!20](0,0)circle(5);
		\begin{scope}
			\clip \disegnoomega;
			\fill[white] (0,0)circle(5);
		\end{scope}
		\draw (0,0)circle(5);
		\draw[thick, blue] \disegnoomega;
		\draw[blue] (75:6)node[right]{$\Omega$};
		\draw [<-] (5:5.2)--(350:6.5)node[right]{$\Omega^{OUT}$};
		\draw[<-] (140:4.7)--(140:5.5)node[left]{$\Omega^{IN}$};
		% %
		\draw[thick,->] (35:8)arc(120:60:7); 
	\end{scope}
}
%%%%%%%%%%%%%%%%%%%%%%%%%%%%%%
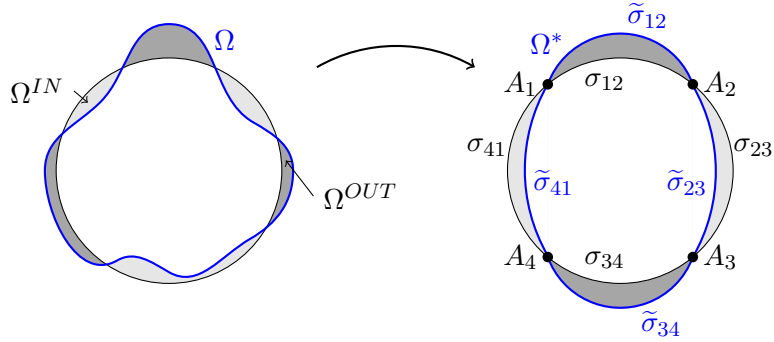
\begin{figure}[h]
	\centering
	\begin{tikzpicture}[x=3mm,y=3mm]
	\pillow
	\end{tikzpicture}
	\caption{A set $\Omega$ and its symmetrization $\Omega^*$.}\label{pillow}
\end{figure}

\begin{rem}
The previous symmetrization does not coincide with the circular Bonnesen symmetrization, used in \cite{Ca2} and \cite{CiLe2}. 
This can be easily seen by noticing that the circular Bonnesen symmetrization of a set $\Omega$ necessarily intersects the largest ball containing $\Omega$ (at least at one point). This is not the case for the symmetrized set $\Omega^*$.
Moreover the boundary of the set $\Omega^*$ is in general not  of class $C^1$, even in the convex case.
\end{rem}

%%%%%%%%%%%%%%%%%%%%%%%%%%%%%%%%%%%%
The first order optimality condition satisfied by a ball which realizes the Fraenkel asymmetry gives 
 a constraint on the coordinates of the intersections points. 
More precisely we prove the following.
\begin{prop}\label{cdt_optimalite}
	Let  $\Omega$ be a transversal set to an optimal ball $B$. Then the intersection points $M_i\equiv(x_i,y_i), i\in\{1,..., 2p\}$ between $\partial\Omega$ and 
	$\partial B$ satisfy
	\begin{eqnarray*}
	x_1+x_3+...+x_{2p-1} - (x_2+x_4+...+ x_{2p})=0,\\
	y_1+y_3+...+y_{2p-1} - (y_2+y_4+...+ y_{2p})=0.
	\end{eqnarray*}
\end{prop}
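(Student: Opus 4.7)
The plan is to exploit that the optimal ball $B$, i.e.\ the one realizing the Fraenkel asymmetry, is by definition a minimizer of $c\mapsto |\Omega \Delta B_c|$ on $\R^2$, where $B_c$ denotes the translate of $B$ with center $c$ (its radius being fixed by $|B_c|=|\Omega|$). Since $|\Omega|$ and $|B_c|$ are constant, this is equivalent to maximizing $c\mapsto |\Omega\cap B_c|$, and the two identities of the proposition should appear as the two components of the vanishing gradient of this map at the optimal center.

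First I would compute the directional derivative. Translating $B_c$ by $tv$ for $v\in\R^2$, a standard shape-derivative computation gives
$$
\lim_{t\to 0}\frac{|\Omega\cap B_{c+tv}|-|\Omega\cap B_c|}{t}=\int_{\partial B_c\cap \Omega} v\cdot n\, d\mathcal{H}^1,
$$
with $n$ the outer unit normal to $B_c$: the signed crescent $B_{c+tv}\Delta B_c$ has asymptotic width $t\,(v\cdot n)$ along $\partial B_c$. The transversality hypothesis plays a key role, as it guarantees $\mathcal{H}^1(\partial B_c\cap \partial \Omega)=0$, so the integrand is defined $\mathcal{H}^1$-almost everywhere on $\partial B_c$ and the limit is a bona fide derivative. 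Vanishing of this quantity for every $v\in\R^2$ then forces
$$
\int_{\partial B_c\cap \Omega} n\, d\mathcal{H}^1 = 0\in\R^2.
$$

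The last step is to unfold this vector identity in coordinates. After translating so that $B$ is centered at the origin with radius $r$, parameterize $\partial B$ by $\theta\mapsto (r\cos\theta, r\sin\theta)$, so that $n(\theta)=(\cos\theta,\sin\theta)$ and $M_i=(x_i,y_i)=(r\cos\theta_i, r\sin\theta_i)$ with $\theta_1<\theta_2<\cdots<\theta_{2p}$. By transversality the $2p$ arcs cut from $\partial B$ by these points alternate between being inside and outside $\Omega$; up to swapping the labeling (which only reverses the sign of each of the two identities), one may assume that the arcs inside $\Omega$ are the $[\theta_{2k-1},\theta_{2k}]$. The first component of the optimality condition reads
$$
\sum_{k=1}^p \int_{\theta_{2k-1}}^{\theta_{2k}} r\cos\theta\, d\theta = \sum_{k=1}^p (y_{2k}-y_{2k-1})=0,
$$
and the second reads
$$
\sum_{k=1}^p \int_{\theta_{2k-1}}^{\theta_{2k}} r\sin\theta\, d\theta = \sum_{k=1}^p (x_{2k-1}-x_{2k})=0,
$$
which rearrange exactly into the two equations of the proposition.

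The main obstacle I expect is the rigorous justification of the shape derivative when $\Omega$ is only assumed to be a Borel (or finite-perimeter) set rather than a smooth one. The transversality assumption is tailor-made for this purpose, but one should verify that, together with the finite-perimeter framework implicit in the paper, it is enough to turn the formal crescent computation into an honest limit uniformly in $v$. Once that is in place, the rest is direct.
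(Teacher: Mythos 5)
Your proposal is correct, and it arrives at the same first-order optimality condition as the paper, but the technical route differs enough to be worth noting. The paper proves the statement as a corollary of Lemma \ref{lemmapsi}: it writes $\psi(x,y)=2\pi-2\int_{B_{(x,y)}}\chi_\Omega$ as an iterated Cartesian integral, differentiates under the integral sign in $y$ (and, symmetrically, in $x$), and identifies $\partial\psi/\partial y$ with a signed length of the projection of $\partial B\cap\Omega$ onto the $x$-axis, which is exactly the alternating sum of the $x_i$. You instead pass to $|\Omega\cap B_c|$, apply the transport / shape-derivative formula for a moving domain of integration to get the single vector identity $\int_{\partial B\cap\Omega}n\,d\mathcal{H}^1=0$, and then parameterize the circle so that integrating $(\cos\theta,\sin\theta)$ over the relevant arcs yields both alternating sums at once. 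Your version is more symmetric (the two coordinates fall out simultaneously from one vector identity) and makes the geometric content explicit --- the first moment of the outer normal over the inside arcs vanishes --- whereas the paper's is more elementary and closer to the definitions, anticipating the shape-derivative machinery it introduces only later for $\lambda$. Both rely on transversality for the same purpose: making the derivative exist (in the paper, so that the inner integral is differentiable except at finitely many abscissae; in yours, so that $\mathcal{H}^1(\partial B_c\cap\partial\Omega)=0$ and the boundary integral is well defined). Your closing caveat about full rigor in the finite-perimeter setting is fair, but it applies equally to the paper's argument, so you are not at a disadvantage. The only step I would tighten is the labeling discussion: state explicitly that the arcs of $\partial B$ cut out by the $M_i$ alternate between $\Omega$ and its complement, fix the convention so that $[\theta_{2k-1},\theta_{2k}]$ are the arcs lying in $\Omega$, and note that the opposite choice merely flips the overall sign of both identities, which does not affect the statement.
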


\begin{rem}\label{remarknumberintersection}
The assumption that $\Omega$ is transversal to the ball $B$ will be used to prove that the function $\psi(x,y)=|B_{(x,y)}\Delta \Omega|$ is differentiable.
	Moreover, if $\Omega$ is  a transversal set to an optimal ball $B$, then by previous result, the intersection points between the boundary of a minimizing set $\partial \Omega$ and $\partial B$ are at least four.
\end{rem}

Proposition \ref{cdt_optimalite} is in fact a corollary of the following differentiability result.
\begin{lemma}\label{lemmapsi}
Assume that $\Omega$ is transversal to a ball $B_{(x,y)}$ centered in ${(x,y)}$.  Then 
the function $\psi(x,y)=|\Omega\Delta B_{(x,y)}|$ is differentiable and 
\begin{equation}\label{DpsiDy}
\frac{\partial \psi}{\partial y}=-2\Big(x_1+x_3+...+x_{2p-1} - (x_2+x_4+...+ x_{2p})\Big).
\end{equation}
\end{lemma}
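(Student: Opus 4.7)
The plan is to reduce the lemma to a shape-derivative computation for a translating ball. Writing
$$\psi(x,y) = |\Omega| + |B| - 2g(x,y), \qquad g(x,y) := |\Omega \cap B_{(x,y)}|,$$
and noting that $|\Omega|$ and $|B|$ are independent of $(x,y)$ (the radius is fixed by $|B|=|\Omega|$), the claim reduces to showing that $g$ is differentiable at the given center with
$$\frac{\partial g}{\partial y} = x_1+x_3+\cdots+x_{2p-1} - (x_2+x_4+\cdots+x_{2p}).$$
Note that $g$ is globally Lipschitz, since $|g(p+h) - g(p)| \leq |B \Delta (B+h)| = O(|h|)$; what remains is to identify its partial derivatives at centers of transversality.

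For $\partial g/\partial y$ at $p=(x,y)$, I would introduce polar coordinates $(r,\theta)$ around $p$ and write
$$g(p + t e_2) - g(p) = |\Omega \cap (B_{p+te_2}\setminus B_p)| - |\Omega \cap (B_p \setminus B_{p+te_2})|.$$
A direct expansion shows that, for small $t>0$, $B_{p+te_2}\setminus B_p$ is a crescent of radial width $t\sin\theta + O(t^2)$ on the upper arc $\{\sin\theta>0\}$, while $B_p\setminus B_{p+te_2}$ is a symmetric crescent of width $-t\sin\theta + O(t^2)$ on $\{\sin\theta<0\}$. Transversality ensures that
$$I := \{\theta \in [0,2\pi) : p + R(\cos\theta,\sin\theta) \in \Omega\}$$
is a finite disjoint union of open arcs whose endpoints are the angles $\theta_i$ of the $M_i$; away from the $\theta_i$, membership of strip points in $\Omega$ is stable for $t$ small, so the two increments combine into
$$g(p+te_2) - g(p) = tR\int_I \sin\theta \, d\theta + o(t).$$
Labelling the $M_i$ cyclically so that the components of $I$ are precisely the arcs $(\theta_{2k-1},\theta_{2k})$, a direct computation gives
$$R\int_I \sin\theta\, d\theta = R\sum_{k=1}^p (\cos\theta_{2k-1} - \cos\theta_{2k}) = \sum_{k=1}^p (x_{2k-1} - x_{2k}),$$
using $x_i = x + R\cos\theta_i$ together with the fact that both sums contain $p$ terms, so the additive constant cancels. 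Multiplying by $-2$ yields \eqref{DpsiDy}; the formula for $\partial \psi/\partial x$ is entirely analogous and, combined with the Lipschitz bound, produces full (not merely directional) differentiability of $\psi$.

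The main technical point is controlling the $o(t)$ remainder near each angle $\theta_i$, where the strip straddles $\partial \Omega$ in a way that cannot be pinned down pointwise (since $\partial \Omega$ need only be measurable). I would handle this by taking, for each $\theta_i$, an angular window of width $\delta$ centered at $\theta_i$: the contribution from these windows is bounded by the total strip area there, of order $t\delta$, while outside the windows the characterization is stable and the error is $O(t^2)$. Letting $\delta \to 0$ after $t\to 0$ yields the required $o(t)$ bound. The finiteness of the set $\{\theta_i\}$, which is exactly what transversality supplies, is what makes this error control possible in the first place.
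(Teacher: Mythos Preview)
Your proof is correct and follows a somewhat different route from the paper's. The paper writes $\int_{B_{(x,y)}}\chi_\Omega$ as an iterated Cartesian integral with inner limits $y\pm\sqrt{1-(u-x)^2}$ and differentiates in $y$ via Leibniz's rule; the resulting boundary terms are then read as the signed horizontal projection of $\partial B\cap\Omega$, which unpacks into the alternating sum of the $x_i$. You instead work in polar coordinates about the moving center, identify the leading-order crescent contribution as $tR\int_I\sin\theta\,d\theta$, and control the remainder near the intersection angles with an explicit $\delta$-window argument. Both computations land on the same boundary integral; the paper's version is terser, while yours makes the role of transversality in the $o(t)$ estimate more explicit and is closer in spirit to a first-variation formula. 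One small remark: Lipschitz continuity together with existence of the two partials does not by itself yield Fr\'echet differentiability; you would need to observe that the partial-derivative formulas you obtain depend continuously on the center (transversality being an open condition), which then gives $C^1$ and hence full differentiability. The paper is equally brief on this point.
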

%Indeed let $\Omega_h$ be a vertical translation of $\Omega$ of the quatity $h$: $\Omega_h=\Omega+ h\ex_y$, being $\ex_y$ the vertical unit vector.
\begin{proof}
Notice that 
$$
|\Omega\Delta B_{(x,y)}|=\int_{\R^2}|\chi_{\Omega}-\chi_{B_{(x,y)}}|^2\;dxdy=\int_{\R^2}\chi_{\Omega}\;dxdy-2\int_{\R^2}\chi_{\Omega}\chi_{B_{(x,y)}}\;dxdy+\int_{\R^2}\chi_{B_{(x,y)}}\;dxdy,
$$
that is, $\displaystyle \psi({(x,y)})=2\pi-2\int_{B_{(x,y)}}\chi_{\Omega}$.
Moreover
$$
\int_{B_{{(x,y)}}}\chi_{\Omega}=\int_{x-1}^{x+1}\int_{y-\sqrt{1-(u-x)^2}}^{y+\sqrt{1-(u-x)^2}}\chi_{\Omega}(u,v)\;dv\;du.
$$
Now,  the number of points where $\displaystyle \int_{B_{{(x,y)}}}\chi_{\Omega}$ is not differentiable is finite, due to the assumption that $\Omega$ is transversal to $B_{(x,y)}$. Therefore one can compute the derivative:
$$
\frac{\bd}{\bd y}\int_{B_{(x,y)}}\chi_\Omega  =\int_{x-1}^{x+1}\left( \chi_{\Omega}(u,y+\sqrt{1-(u-x)^2}) - \chi_{\Omega}(u,y-\sqrt{1-(u-x)^2})\right)\; du.
$$
Notice that these two integrals measure the length of the projection on the horizontal axis of $\partial B \cap \Omega$
(counted positively on the upper half plane, negatively on the lower half plane).
This entails (see Figure \ref{fig-epi})
$$
\frac{\bd}{\bd y}\int_{B_{(x,y)}}\chi_\Omega =(x_1-x_{2})+(x_3-x_4)+...+(x_{2k+1}-x_{2k})+(x_{2p-1}-x_{2p})].
$$
\end{proof}

%%%%%%%%%%%%%%%%%%%%%%%%%%%%%%%%%%%%%%
In the following theorem, we study the asymptotic behaviour  of $\mathcal{F}(\Omega_{\varepsilon} ^*)$,  where $\Omega_{\varepsilon}$ is a sequence
of sets converging to a ball. In particular, we prove that the limit value is always greater than the value
of $\mathcal{F}$ for the optimal stadium of Theorem \ref{thmAFN}. We will work in the general case where the
boundary of $\Omega_{\varepsilon}^*$ may contain arcs of the ball (see Figure \ref{thetaeta}), even if we will use later this
theorem only for transversal domains.
\begin{thm}\label{casABCD}
Let $\{\Omega_{\varepsilon}\}_{ \varepsilon>0}$,  be a sequence of sets, such that $|\Omega_{\varepsilon}|=\pi=|B|$ where $B$ is a ball. 
 Assume that  
$|B\Delta \Omega_{\varepsilon} |= {4\varepsilon}$.
Then
$$
\liminf_{\varepsilon\to 0}\mathcal{F}({\Omega_{\varepsilon} ^*})\geq \dfrac{\pi}{8(4-\pi)}\approx0.45.
$$
\end{thm}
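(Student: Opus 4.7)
The plan is to parametrise $\Omega_\varepsilon^*$ explicitly, bound its Fraenkel asymmetry from above by using $B$ as a competitor, carry out a second-order expansion of its perimeter in $\varepsilon$, and then minimise the resulting leading expression over the free parameters.

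\emph{Setup.} Place $B$ at the origin with unit radius. By the two orthogonal symmetries of $\Omega_\varepsilon^*$, the quadruple $A_1,\dots,A_4$ is determined by two half-angles $\theta=\theta_\varepsilon$ and $\eta=\eta_\varepsilon$, where $2\theta$ (resp.\ $2\eta$) is the central angle subtended by each of $\sigma_{12},\sigma_{34}$ (resp.\ $\sigma_{23},\sigma_{41}$); in particular $\theta+\eta\le\pi/2$, with equality exactly when $\partial\Omega_\varepsilon^*$ contains no arc of $\partial B$. Since $|B|=|\Omega_\varepsilon|=\pi$ and $|B\Delta\Omega_\varepsilon|=4\varepsilon$, one has $|\Omega_\varepsilon^{IN}|=|\Omega_\varepsilon^{OUT}|=2\varepsilon$; the construction preserves these quantities, so $|B\Delta\Omega_\varepsilon^*|=4\varepsilon$ and $|\Omega_\varepsilon^*|=\pi$. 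Testing the infimum defining $\lambda$ with $B$ itself already yields $\lambda(\Omega_\varepsilon^*)\le 4\varepsilon/\pi$, which is all that is needed since I seek a lower bound on $\mathcal F$.

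\emph{Second-order expansion of the perimeter.} Let $f(\theta):=\theta-\sin\theta\cos\theta$ denote the area of a unit-circle segment of central angle $2\theta$. I parametrise $\tilde\sigma_{12}$ by its own half-central-angle $\alpha$ and radius $r=\sin\theta/\sin\alpha$ (the chord length being $2\sin\theta$), so that $A(\alpha)=r^2 f(\alpha)$ is the area of the segment it cuts off and $L(\alpha)=2r\alpha$ is its arc-length. Implicit differentiation gives $dL/dA=1/r$ and, at the base point $\alpha=\theta$, $r=1$, $d^2L/dA^2=\cot\theta/[2(1-\theta\cot\theta)]$. Imposing the area constraints $A=f(\theta)+\varepsilon$ for $\tilde\sigma_{12}$ and $A=f(\eta)-\varepsilon$ for $\tilde\sigma_{23}$ produces
\[
L_{12}=2\theta+\varepsilon+\frac{\cot\theta}{4(1-\theta\cot\theta)}\varepsilon^{2}+O(\varepsilon^{3}),\quad L_{23}=2\eta-\varepsilon+\frac{\cot\eta}{4(1-\eta\cot\eta)}\varepsilon^{2}+O(\varepsilon^{3}).
\]
Summing $2L_{12}+2L_{23}$ and the four leftover arcs of $\partial B$ of total length $2\pi-4(\theta+\eta)$, the $\varepsilon$-linear terms cancel and
\[
P(\Omega_\varepsilon^*)-2\pi=\frac{\varepsilon^{2}}{2}\bigl[\Phi(\theta)+\Phi(\eta)\bigr]+O(\varepsilon^{3}),\qquad \Phi(t):=\frac{\cot t}{1-t\cot t}.
\]
Combining with $\lambda^2(\Omega_\varepsilon^*)\le(4\varepsilon/\pi)^2$ gives $\mathcal F(\Omega_\varepsilon^*)\ge (\pi/64)[\Phi(\theta)+\Phi(\eta)]+O(\varepsilon)$.

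\emph{Minimisation and the conclusion.} A direct computation yields $\Phi'(t)=-(1-t\cot t)^{-2}<0$ and $\Phi''(t)=2f(t)/[\sin^{2}t\,(1-t\cot t)^{3}]>0$ on $(0,\pi/2)$, so $\Phi$ is strictly decreasing and strictly convex. Therefore on $\{(\theta,\eta):\theta+\eta\le\pi/2,\ \theta,\eta>0\}$ the minimum of $\Phi(\theta)+\Phi(\eta)$ is attained at the symmetric extremal point $\theta=\eta=\pi/4$, where it equals $2\Phi(\pi/4)=8/(4-\pi)$; passing to the liminf,
\[
\liminf_{\varepsilon\to 0}\mathcal{F}(\Omega_\varepsilon^*)\ge \frac{\pi}{64}\cdot\frac{8}{4-\pi}=\frac{\pi}{8(4-\pi)}.
\]

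\emph{Main obstacle.} The quadratic expansion above presupposes that $\theta_\varepsilon,\eta_\varepsilon$ stay uniformly away from $0$: should $\theta_\varepsilon\to 0$ fast enough that $f(\theta_\varepsilon)=o(\varepsilon)$, the arc $\tilde\sigma_{12}$ degenerates into a near-full-circle of radius $\sim\sqrt{\varepsilon/\pi}$, the perimeter excess becomes of order $\sqrt\varepsilon$ and $\mathcal F(\Omega_\varepsilon^*)\to+\infty$. This is favourable for a lower bound, but one must still handle it rigorously, either by a case split separating the ``good'' regime (where the expansion applies) from the degenerate one, or by a direct isoperimetric estimate on each pillow $a_{ij}$. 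This bookkeeping is the technical core of the argument; the remainder is the elementary Taylor-style calculation outlined above.
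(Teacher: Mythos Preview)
Your approach is essentially the same as the paper's. Both parametrise $\Omega_\varepsilon^*$ by the half-angles subtended by the four arcs on $\partial B$, bound $\lambda(\Omega_\varepsilon^*)$ above by $4\varepsilon/\pi$, expand the perimeter to second order in $\varepsilon$, and minimise the resulting coefficient by convexity to obtain the value $\pi/[8(4-\pi)]$ at the symmetric point $\theta=\eta=\pi/4$. Your function $\Phi(t)=\cos t/(\sin t - t\cos t)$ is exactly the paper's $\frac{\cos\hat\eta}{\sin\hat\eta-\hat\eta\cos\hat\eta}$, and your final inequality coincides with the paper's estimate in its case $(A_1,A_2)$.

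Where you part ways is in the treatment of the degenerate regimes, and you are right to flag this as the ``main obstacle'': it is indeed where most of the paper's work lies. The paper does not get away with the single dichotomy you suggest (good regime vs.\ $f(\theta_\varepsilon)=o(\varepsilon)$); it splits the degeneration $\eta_i\to 0$ into three sub-cases according to whether $\varepsilon/\sin^2\eta_i$ tends to a positive finite limit, to $0$, or to $+\infty$ (labelled $B_i$, $C_i$, $D_i$), and analyses each via the Taylor expansion of an auxiliary function $F(x,y)=\sin x\,\dfrac{h^{-1}(h(x)+y)}{\sin(h^{-1}(h(x)+y))}-x$. The subtlety is that the outer and inner contributions each carry a divergent term $\pm 1/(2\varepsilon)$ which must be shown to cancel (or to be dominated) across all $4\times 4$ pairings; only in the pairing $(A_1,A_2)$ does one get a finite limit, and in every other pairing the liminf is $+\infty$. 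Your sketch captures the case $D$ correctly (perimeter excess of order $\sqrt\varepsilon$), but the intermediate regimes $B$ and $C$ require their own bookkeeping, which the paper carries out explicitly.
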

From now on we will use the  following functions:
\begin{equation}\label{defn_fonctions_gh}
g(t)=t-\sin(t)\cos(t),
\qquad h(t)=\frac {g(t)}{\sin^2(t)}\,.
\end{equation}
\begin{proof}
Let us fix some notations for $\Omega_{\varepsilon}^*$; let us consider the case (a) of Figure \ref{thetaeta}.
According to the figure,  $R_1^{\varepsilon} =\frac {\sin(\eta_1^{\varepsilon})}{\sin(\theta_1^{\varepsilon})}$ and
$R_2^{\varepsilon}=\frac {\sin(\eta_2^{\varepsilon})}{\sin(\theta_2^{\varepsilon})}$ are the radii of the arcs $A_1B_1,A_2B_2$, respectively. 
We observe that 
$$
0\leq \eta_1^{\varepsilon}+\eta_2^{\varepsilon}\leq \pi/2, \qquad \eta_1^{\varepsilon}\leq \theta_1^{\varepsilon}\leq \pi, \qquad \theta_2^{\varepsilon}\leq \eta_2^{\varepsilon}\leq  \pi/2.
$$
%%%%%%%%%%%%%%%%%%%%%%%%%%%%%
\newcommand{\casoA}{(30:5)arc(15:-15:9.66);}
\newcommand{\casoAA}{(150:5)arc(165:195:9.66);}
\newcommand{\figuraThetaEta}{
\fill[blue!10](70:5)arc(45:135:2.42);\fill[blue!10](250:5)arc(225:315:2.42);\fill[white](0,0)circle(5);
\fill[blue!10] (-30:5)arc(-30:30:5);
\fill[blue!10] (150:5)arc(150:210:5);
	\begin{scope}
		\clip (0,0)circle(5);
		\fill[white] \casoA
	\end{scope}
	\begin{scope}
		\clip (0,0)circle(5);
		\fill[white] \casoAA;
	\end{scope}
\fill[gray!20] (0,3)--(0,4)arc(90:135:1);\draw (0,2.7)++(110:1.5)node{\footnotesize$\theta_1$};
\fill[gray!50] (0,0)--(0,1.5)arc(90:110:1.5); \draw (88:1.7)node{\footnotesize{$\eta_1$}};
\fill[gray!20] (-2.5,0)--(-1.5,0)arc(0:20:1); \draw (-1,0)node{\footnotesize{$\theta_2$}};%\draw[<-] (-2,0)--(-2,-1)node[below]{\footnotesize$\theta_2$};
\fill[gray!50] (0,0)--(1.5,0)arc(0:30:1.5);\draw (10:2)node{\footnotesize$\eta_2$};
	\draw (0,0) circle(5);\draw[dotted] (0,-6)--(0,6);\draw[dotted] (-6,0)--(6,0);
	\draw[very thick, blue] (70:5)arc(45:135:2.42);
	\draw[very thick, blue] (70:5)arc(70:30:5);
	\draw[very thick, blue] (30:5)arc(15:-15:9.66);\draw[very thick,blue](290:5)arc(290:330:5); 
	\draw[very thick, blue] (110:5)arc(110:150:5);
	\draw[very thick,blue] (150:5)arc(165:195:9.66);\draw[very thick,blue](210:5)arc(210:250:5);
	\draw[very thick,blue] (250:5)arc(225:315:2.42);
	\draw[very thin, gray] (0,0)--(70:5);\draw (0,0)node[below]{$O$};\draw (70:5) node[right]{$B_1$};
	\draw[very thin, gray] (0,0)--(110:5);\draw (110:5)node[left]{$A_1$};
	\draw (0,3)--(110:5);\draw (-0.2,3)node[right]{$O_1$};
	\draw[very thin, gray] (0,0)--(30:5);\draw (30:5)node[right]{$A_2$};
	\draw[very thin, gray] (0,0)--(-30:5);\draw (-30:5)node[right]{$B_2$};
	\draw (-2.5,0)--(30:5);\draw (-2.5,0)node[left]{$O_2$};
	\draw[<-](200:5)--(220:7)node[left]{area $\ep$};\draw[->] (220:7)--(265:5.2);
	\draw (-0.2,3.3)node[left]{$R_1$};\draw (-1.5,0.9)node{$R_2$};
	}
\newcommand{\casoB}{(30:5)arc(15:-15:9.66);}
\newcommand{\casoBB}{(150:5)arc(165:195:9.66);}
\newcommand{\figuraThetaEtaB}{
	% % % % aree in blu
	\fill[blue!10](75:5)arc(30:150:1.5);\fill[blue!10](255:5)arc(210:330:1.5);\fill[white](0,0)circle(5);
	\fill[blue!10] (-20:5)arc(-20:20:5);
	\fill[blue!10] (160:5)arc(160:200:5);\fill[blue!10] (160:5)arc(15:-15:6.61);\fill[blue!10] (20:5)arc(165:195:6.61);
	% % % % angoli 
	\fill[gray!20] (0,4)--(0,4.5)arc(90:150:0.5); \draw (0,4.5)node{\footnotesize$\theta_1$};
	\fill[gray!50] (0,0)--(0,1.5)arc(90:105:1.5); \draw (88:1.7)node{\footnotesize{$\eta_1$}};
	\fill[gray!20] (10,0)--(9,0)arc(180:162:1);\draw (9,0.2)node[left]{\footnotesize$\theta_2$};
	\fill[gray!50] (0,0)--(1.5,0)arc(0:20:1.5);\draw (10:2)node{\footnotesize$\eta_2$};
	% % % %
	\draw (0,0) circle(5);\draw[dotted] (0,-6)--(0,6);\draw[dotted] (-6,0)--(10.2,0);
	\draw[very thick, blue] (75:5)arc(30:150:1.5);
	\draw[very thick, blue] (75:5)arc(75:20:5);\draw[very thick,blue](-20:5)arc(340:285:5); \draw[very thick, blue] (200:5)arc(200:255:5); \draw[very thick,blue](105:5)arc(105:160:5);
	\draw[very thick, blue] (20:5)arc(165:195:6.61);
	\draw[very thick,blue] (255:5)arc(210:330:1.5);
	\draw[very thick,blue] (160:5)arc(15:-15:6.61);
	\draw[very thin, gray] (0,0)--(75:5);\draw (0,0)node[below]{$O$};\draw (75:5) node[above right]{$B_1$};
	\draw[very thin, gray] (0,0)--(105:5);\draw (105:5)node[above left]{$A_1$};
	\draw[very thin, gray] (0,4)--(105:5);\draw (0,4)node[below]{$O_1$};
	\draw[very thin, gray] (0,0)--(20:5);\draw (20:5)node[above right]{$A_2$};
	\draw[very thin, gray] (0,0)--(-20:5);\draw (-20:5)node[right]{$B_2$};
	\draw[very thin, gray] (10,0)--(20:5);\draw (10,0)node[right]{$O_2$};
	\draw[<-](185:4.7)--(220:7)node[left]{area $\ep$};\draw[->] (220:7)--(265:5.2);
}	
%%%%%%%%%%%%%%%%%%%%%%%%%%%%%%
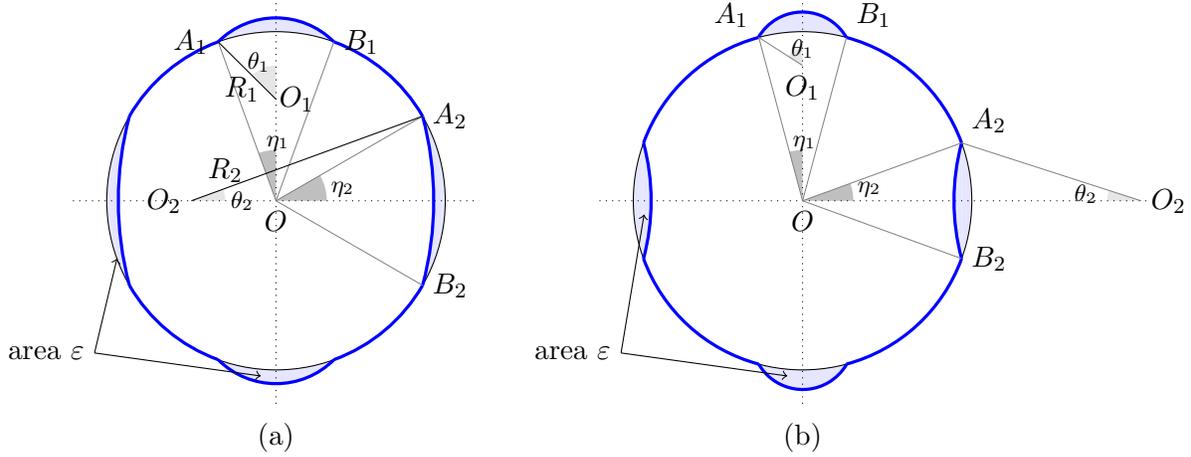
\begin{figure}[h]
	\centering
	\begin{tikzpicture}[x=4.5mm,y=4.5mm]
	\figuraThetaEta
	\draw (0,-7)node{(a)};
	\begin{scope}[xshift=7cm]
	\figuraThetaEtaB
	\draw (0,-7)node{(b)};
	\end{scope}
	\end{tikzpicture}
	\caption{The parametrization of a set $\Omega^*_{\ep}$ in the proof of Theorem \ref{casABCD}.}\label{thetaeta}
\end{figure}

Notice that, by construction, it holds
$$
\lambda(\Omega^*_\ep)=\frac{|\Omega^*_\ep\Delta B|}{\pi}=\frac{4\ep}{\pi},
$$
and hence
\begin{eqnarray}\label{aires}
\varepsilon&=&(R_1^{\varepsilon})^2g(\theta_1^{\varepsilon})-g(\eta_1^{\varepsilon})=\sin^2(\eta_1^{\varepsilon})
h(\theta_1^{\varepsilon})-g(\eta_1^{\varepsilon}),
\\
\varepsilon&=&-(R_2^{\varepsilon})^2g(\theta_2^{\varepsilon})+g(\eta_2^{\varepsilon})=-\sin^2(\eta_2^{\varepsilon})
h(\theta_2^{\varepsilon})+g(\eta_1^{\varepsilon})\,.
\nonumber
\end{eqnarray}
Moreover $\lambda(\Omega_{\varepsilon})\leq \frac{4\varepsilon}{\pi}$ and
\begin{eqnarray*}
\delta(\Omega_{\varepsilon}^*)&=&\frac{1}{2\pi}
\left[
4R_1^{\varepsilon}\theta_1^{\varepsilon}+4R_2^{\varepsilon}\theta_2^{\varepsilon}+4\left(\frac{\pi}{2}-\eta_1^{\varepsilon}-\eta_2^{\varepsilon}\right)-2\pi
\right]\\
&=&
\frac{2}{\pi}
\left(
\sin(\eta_1^{\varepsilon})\frac{\theta_1^{\varepsilon}}{\sin(\theta_1^{\varepsilon})}-\eta_1^{\varepsilon}
+
\sin(\eta_2^{\varepsilon})\frac{\theta_2^{\varepsilon}}{\sin(\theta_2^{\varepsilon})}-\eta_2^{\varepsilon}
\right).
\end{eqnarray*}
We deduce from (\ref{aires}) that
\begin{equation}\label{aires_2}
\frac{\varepsilon}{\sin^2(\eta_1^{\varepsilon})}=h(\theta_1^{\varepsilon})-h(\eta_1^{\varepsilon})\,,\,\,\,\,\,\,
\frac{\varepsilon}{\sin^2(\eta_2^{\varepsilon})}=h(\eta_2^{\varepsilon})-h(\theta_2^{\varepsilon})\,.
\end{equation}
Hence
\begin{equation}\label{deltaA}
\delta(\Omega_\ep^*)=\frac{2}{\pi} \left[F\left(\eta_1^{\varepsilon},\frac{\ep}{\sin^2(\eta_1^{\varepsilon})}\right)+ 
F\left(\eta_2^{\varepsilon},\frac{-\ep}{\sin^2(\eta_2^{\varepsilon})}\right) \right],
\end{equation}
where 
$$
F(x,y)=\sin(x)\frac{h^{-1}(h(x)+y)}{\sin(h^{-1}(h(x)+y))}-x\,.
$$
Observe that $h^{-1}$ exists, since $h'(x)=2\frac{\sin(x)-x\cos(x)}{\sin^3(x)}$ is positive in $(0,\pi)$. 
%$h''(x)=2\frac{3x-2x\sin^2(x)-3\sin(x)\cos(x)}{\sin^4(x)}$
In the sequel we will omit the dependence of $\eta_i,\theta_i,R_i$ on $\varepsilon$.
Notice that the angles $\eta_1, \theta_1, \eta_2, \theta_2$ may have the following behaviours (up to sub-sequences), 
as $\varepsilon\to 0$: 
\begin{itemize}
	\item [$A_i$:] $\eta_i\to \hat{\eta}_i>0$;
	\item [$B_i$:] $\eta_i\to 0$ and $\frac{\varepsilon}{\sin^2(\eta_i)}\to l_i>0$;
	\item [$C_i$:] $\eta_i\to 0$ and $\frac{\varepsilon}{\sin^2(\eta_i)}\to 0$;
	\item [$D_i$:] $\eta_i\to 0$ and $\frac{\varepsilon}{\sin^2(\eta_i)}\to +\infty$.
%	\item [$A_2$:] $\eta_2\to \hat{\eta}_2>0$
%	\item [$C_2$:] $\eta_2\to 0$ and $\frac{\varepsilon}{\sin^2(\eta_2)}\to 0$
\end{itemize}
% % % % Note that the cases $B_2$ and $D_2$ are impossible: this can be seen from (\ref{aires_2}).
In each case, we are going to compute the Taylor expansion of $\displaystyle \frac{1}{\varepsilon^2}F\left(\eta_1, \frac{\varepsilon}{\sin^2(\eta_1)}\right)$
and $\displaystyle \frac{1}{\varepsilon^2}F\left(\eta_2, \frac{-\varepsilon}{\sin^2(\eta_2)}\right)$. 
This will help us to estimate from below the limit of $\mathcal{F}(\Omega^*_\varepsilon)$.

Notice that the analysis of case (b) of Figure \ref{thetaeta} is analogous to that  one of case (a).
Indeed we have 
$$
h(\eta_2)+h(\theta_2)=\frac{\ep}{\sin^2(\eta_2)},
$$
which entails
$$
\theta_2=-h^{-1}\Big(h(\eta_2)-\frac{\ep}{\sin^2(\eta_2)}\Big),
$$
since $h$ is an odd function.
Hence in case (b) we obtain the same expression (\ref{deltaA}) for $\delta(\Omega_\ep)$.

\noindent
{\bf Case $A_1$.}
Since $F$ is analytic, we can write 
$$
F(x,y)=\sum_{k,l}\frac{a_{k,l}}{k!l!}(x-\hat{x})^k y^l\,,\,\,\,\,\,\,\,\,a_{k,l}=\frac{\partial^{k+l}}{\partial x^k\partial y^l}F(\hat{x},0)\,,
$$
where $\hat{x}>0$.
Observe that $a_{k,0}=0$ for every $k$, since $F(x,0)\equiv 0$.
Moreover $a_{0,1}=\sin^2(\hat{x})/2$ and
$$
a_{k,1}=
\left\{
\begin{array}{ll}
\cos(2\hat{x})(-1)^{m+1}2^{2m-2}, & k=2m
\\
\sin(2\hat{x})(-1)^{m}2^{2m-1}, & k=2m+1\,.
\end{array}
\right.
$$
Therefore
$$
F(x,y)=\cos(2\hat{x})\sum_{m\geq 0}(-1)^{m+1}2^{2m-2}(x-\hat{x})^{2m} \frac{y}{(2m)!}+
\sin(2\hat{x})\sum_{m\geq 0}(-1)^{m}2^{2m-1}(x-\hat{x})^{2m+1} \frac{y}{(2m+1)!}+
$$
$$
+\frac y4+
y^2\sum_{k\geq 0,l\geq 2}\frac{a_{k,l}}{k!l!}(x-\hat{x})^k y^{l-2},
$$
that is,
\begin{eqnarray*}
F(x,y)&=&\frac y4[-\cos(2\hat{x})\cos(2(x-\hat{x}))+1+\sin(2\hat{x})\sin(2(x-\hat{x}))]
+
y^2\frac{a_{0,2}}{2}+\\
&+& y^3\sum_{l\geq 0}\frac{a_{0,l+3}}{(l+3)!}y^{l}
+
y^2(x-\hat{x})\sum_{k\geq 0,l\geq 0}\frac{a_{k+1,l+2}}{(k+1)!(l+2)!}(x-\hat{x})^{k} y^{l},
\end{eqnarray*}
where $$
a_{0,2}=\frac{\cos(\hat{x})\sin^4(\hat{x})}{4(\sin(\hat{x})-\hat{x}\cos(\hat{x}))}.
$$
We note that the last two series are convergent for $|x-\hat{x}|$ and $|y|$ sufficiently small, since the Taylor series of $F$ at $(\hat{x},0)$ is absolutely convergent.
Therefore, if $\eta_1=\hat{\eta}_1+\varepsilon_1$, one has
\begin{eqnarray*}
\frac{1}{\varepsilon^2}
F\left(\eta_1, \frac{\varepsilon}{\sin^2(\eta_1)}\right)&=&
\frac{1}{4\sin^2(\eta_1)\varepsilon}[-\cos(2\hat{\eta}_1)\cos(2\varepsilon_1)+1+\sin(2\hat{\eta}_1)\sin(2\varepsilon_1)]+
\frac{1}{\sin^4(\eta_1)}\frac{a_{0,2}}{2}+\\
&+&
\frac{\varepsilon}{\sin^6(\eta_1)}\sum_{l\geq 0}\frac{a_{0,l+3}}{(l+3)!}\frac{\varepsilon^l}{\sin^{2l}(\eta_1)}
+\frac{\varepsilon_1}{\sin^4(\eta_1)}\sum_{k\geq 0,l\geq 0}\frac{a_{k,l+3}}{k!(l+3)!}\varepsilon_1^k \frac{\varepsilon^l}{\sin^{2l}(\eta_1)}.
\end{eqnarray*}
When $\varepsilon, \varepsilon_1\to 0$, the first term is equivalent to 
$
\frac{1}{2\varepsilon}
$, the second one is equal to $\frac{\cos(\hat{\eta}_1)}{8(\sin(\hat{\eta}_1)-\hat{\eta}_1\cos(\hat{\eta}_1))}$
and the third and the fourth ones go to 0. This implies that
\begin{equation}\label{limA1}
\frac{1}{\varepsilon^2}
F\left(\eta_1, \frac{\varepsilon}{\sin^2(\eta_1)}\right)=
\frac{\cos(\hat{\eta}_1)}{8(\sin(\hat{\eta}_1)-\hat{\eta}_1\cos(\hat{\eta}_1))}
+ \frac{1}{2\varepsilon} +o(1),
\end{equation}
and the first term is positive.

\noindent
{\bf Case $A_2$.}
As in  case $A_1$, one can write the series expansion of $F(x,y)$. Since $\eta_2=\hat{\eta}_2+\varepsilon_2$, it holds 
\begin{eqnarray*}
\frac{1}{\varepsilon^2}
F\left(\eta_2, -\frac{\varepsilon}{\sin^2(\eta_2)}\right)&=&
 -\frac{1}{4\sin^2(\eta_2)\varepsilon}[-\cos(2\hat{\eta}_2)\cos(2\varepsilon_2)+1+\sin(2\hat{\eta}_2)\sin(2\varepsilon_2)] +\frac{1}{\sin^4(\eta_2)}\frac{a_{0,2}}{2}\\
&-&\frac{\varepsilon}{\sin^6(\eta_2)}\sum_{l\geq 0}\frac{a_{0,l+3}}{(l+3)!}\frac{(-\varepsilon)^l}{\sin^{2l}(\eta_2)}
+\frac{\varepsilon_2}{\sin^4(\eta_2)}\sum_{k\geq 0,l\geq 0}\frac{a_{k,l+3}}{k!(l+3)!}\varepsilon_2^k \frac{(-\varepsilon)^l}{\sin^{2l}(\eta_2)}.
\end{eqnarray*}
When $\varepsilon, \varepsilon_2\to 0$, the first term is equivalent to 
$
-\frac{1}{2\varepsilon}
$, the second one is equal to $\frac{\cos(\hat{\eta}_2)}{8(\sin(\hat{\eta}_2)-\hat{\eta}_2\cos(\hat{\eta}_2))}$
and the third and the fourth ones go to 0.
This implies that
\begin{equation}\label{limA2}
\frac{1}{\varepsilon^2}
F\left(\eta_2, \frac{\varepsilon}{\sin^2(\eta_2)}\right)=
\frac{\cos(\hat{\eta}_2)}{8(\sin(\hat{\eta}_2)-\hat{\eta}_2\cos(\hat{\eta}_2))}
- \frac{1}{2\varepsilon} +o(1)
\end{equation}
and the first term is positive.

\noindent
{\bf Case $B_1$.} 
The angle $\eta_1$ tends to zero, while $\theta_1=h^{-1}(h(\eta_1)+\frac{\varepsilon}{\sin^2(\eta_1)})$ converges to $h^{-1}(l_1)\neq 0$. 
Hence it holds
\begin{equation*}\label{limB1}
\frac{1}{\varepsilon^2}F\left(\eta_1, \frac{\varepsilon}{\sin^2(\eta_1)}\right)=
\frac{\eta_1}{\varepsilon^2}
\left[
\frac{\sin(\eta_1)}{\eta_1}\frac{h^{-1}(h(\eta_1)+\frac{\varepsilon}{\sin^2(\eta_1)})}{\sin(h^{-1}(h(\eta_1)+\frac{\varepsilon}{\sin^2(\eta_1)}))}-1
\right].
\end{equation*}
We observe that $\frac{\eta_1}{\varepsilon^2}\to +\infty$ and 
$$
\liminf_{\varepsilon\to 0} \frac{\sin(\eta_1)}{\eta_1}\frac{\theta_1}{\sin(\theta_1)}-1>0.
$$

\noindent
{\bf Case $B_2$.} 
In this case $\theta_2=h^{-1}(h(\eta_2)-\frac{\ep}{\sin^2(\eta_2)})$, so that, as in case $B_1$,
\begin{equation*}\label{limB2}
\frac{1}{\varepsilon^2}F\left(\eta_2, -\frac{\varepsilon}{\sin^2(\eta_2)}\right)=
\frac{\eta_2}{\varepsilon^2}
\left[
\frac{\sin(\eta_2)}{\eta_2}\frac{h^{-1}(h(\eta_2)-\frac{\varepsilon}{\sin^2(\eta_2)})}{\sin(h^{-1}(h(\eta_2)-\frac{\varepsilon}{\sin^2(\eta_2)}))}-1
\right].
\end{equation*}
As before, $\frac{\eta_2}{\varepsilon^2}\to +\infty$ and  
$$
\liminf_{\varepsilon\to 0} \frac{\sin(\eta_2)}{\eta_2}\frac{\theta_2}{\sin(\theta_2)}-1>0,
$$  since $\eta_2$ tends to zero, while $\theta_2$ tends to $h^{-1}(l_2)\neq 0$.

\noindent
{\bf Case $C_1$.}
Since $F$ is analytic, we can write 
$$
F(x,y)=\sum_{k,l}\frac{a_{k,l}}{k!l!}x^k y^l\,,\qquad\text{where } a_{k,l}=\frac{\partial^{k+l}}{\partial x^k\partial y^l}F(0,0).
$$
We need the exact value of some of the coefficients $a_{k,l}$.
Observe that $a_{k,0}=0$ for every $k$, since $F(x,0)\equiv 0$; as well, $a_{0,l}=0$ for every $l$, since $F(0,y)\equiv 0$.
Moreover $a_{0,1}=0$ and for $k\geq 1$
$$
a_{k,1}=
\left\{
\begin{array}{ll}
0, & k=2m+1
\\
(-1)^{m+1}2^{2m-2}, & k=2m
\end{array}
\right.
$$
and   $a_{1,2}=3/4$.
Hence $F$ can be written as
$$
F(x,y)=y\sum_{k\geq 1}(-1)^{k+1}\frac{2^{2k-2}}{(2k)!}x^{2k}+
y^2\sum_{k\geq 0,l\geq 0}\frac{a_{k,l+2}}{k!(l+2)!}x^k y^{l}=
\frac{y \sin^2(x)}{2}+
y^2\sum_{k\geq 1,l\geq 0}\frac{a_{k,l+2}}{k!(l+2)!}x^k y^{l},
$$
that is,
$$
F(x,y)=\frac{y \sin^2(x)}{2}+
xy^2\frac 38 +
xy^3 \sum_{l\geq 0}\frac{a_{1,l+3}}{(l+3)!} y^{l}
+
x^2 y^2\sum_{k\geq 0,l\geq 0}\frac{a_{k+2,l+2}}{(k+2)!(l+2)!}x^k y^{l}.
$$
We observe that the last two terms are convergent for $|x|$ and $|y|$ sufficiently small, since the Taylor series of $F$ at $(0,0)$ is absolutely convergent.
Therefore
\begin{eqnarray*}
&\displaystyle \frac{1}{\varepsilon^2}F\left(\eta_1,\frac{\varepsilon}{\sin^2(\eta_1)}\right)=&\\
&=\displaystyle {\frac{1}{2\varepsilon}}
+\frac{\eta_1}{\sin^4(\eta_1)}\left[\frac 38 +
\frac{\varepsilon}{\sin^2(\eta_1)}  \sum_{l\geq 0}\frac{a_{1,l+3}}{k!(l+3)!} \frac{\varepsilon^l}{\sin^{2l}(\eta_1)}
+
{\eta_1}\sum_{k\geq 0,l\geq 0}\frac{a_{k+2,l+2}}{(k+2)!(l+2)!} \frac{\eta_1^k\varepsilon^l}{\sin^{2l}(\eta_1)}\right],&
\end{eqnarray*}
and the last term tends to $\frac 38$. This implies that
\begin{equation*}\label{limC1}
\frac{1}{\varepsilon^2}F\left(\eta_1,\frac{\varepsilon}{\sin^2(\eta_1)}\right)= {\frac{1}{2\varepsilon}}+\frac 38\frac{\eta_1}{\sin^4(\eta_1)}\,+o(1).
\end{equation*}

{\bf Case $C_2$.}
In this case, $\theta_2=h^{-1}(h(\eta_2)-\frac{\ep}{\sin^2\eta_2})$. Using the same argument as in case $C_1$, we have
\begin{equation*}\label{limC2} 
 \frac{1}{\varepsilon^2}F\left(\eta_2,-\frac{\varepsilon}{\sin^2(\eta_2)}\right)= {\frac{-1}{2\varepsilon}}+\frac38 \frac{\eta_2}{\sin^4\eta_2} +o(1)\,.
\end{equation*}

\noindent
{\bf Case $D_1$.}
We claim that
$$
h(\pi-\alpha)\geq \frac{\pi}{\alpha^2}\geq h(\pi-\alpha-\alpha^2)\,,\,\,\,\,\,\,0\leq \alpha<0.9.
$$
This can be easily proved recalling that 
\begin{equation}\label{sinus}
t-\frac{t^3}{6}\leq \sin(t)\leq t-\frac{t^3}{6}+\frac{t^5}{120}.
\end{equation}
Indeed the first inequality to prove is equivalent to $\alpha^2(\pi-\alpha+\sin(2\alpha)/2)-\pi \sin^2(\alpha)\geq 0$
and by (\ref{sinus}) a bound from below of the left hand side is $\frac{\pi}{3}\alpha^4-\frac 23 \alpha^5-\frac{2\pi}{45}\alpha^6+\frac{\pi}{360}\alpha^8-\frac{\pi}{14400}\alpha^{10}$ which is positive for $\alpha<1$.
On the other hand, the second inequality is equivalent to $\alpha^2(\pi-\beta+\sin(2\beta)/2)-\pi \sin^2(\beta)\leq 0$, where $\beta=\alpha+\alpha^2$ and by (\ref{sinus}) a bound from above of the left hand side is $\alpha^2\left(\pi-\frac 23 \beta^3+\frac{2}{15}\beta^5\right)-\pi(\beta-\frac{\beta^3}{6})^2$ which is negative for $\alpha<0.9$.
\\
Let us set $\frac{\pi}{\alpha^2}=h(\eta_1)+\frac{\varepsilon}{\sin^2(\eta_1)}$. The hypotheses on $\eta_1$ imply that $\alpha\to 0$. Therefore
$$
\lim_{\varepsilon\to 0}  F\left(\eta_1,\frac{\varepsilon}{\sin^2(\eta_1)}\right)=\lim_{\varepsilon\to 0}  \sin(\eta_1)\frac{h^{-1}(\pi/\alpha^2)}{\sin(h^{-1}(\pi/\alpha^2))}-\eta_1=
\lim_{\varepsilon\to 0}  \sin(\eta_1)\frac{\pi-\alpha}{\sin(\alpha)}-\eta_1.
$$
Since $\alpha$ is equivalent to $\frac{\sqrt{\pi}}{\sqrt{\varepsilon}}\sin(\eta_1)$, one has
$$
 F\left(\eta_1,\frac{\varepsilon}{\sin^2(\eta_1)}\right)=
\frac{{\pi-\alpha}}{\sqrt{\pi}}\sqrt{\varepsilon}-\eta_1 +o(\sqrt{\varepsilon}).
$$
The hypotheses on $\eta_1$ imply that 
\begin{equation*}\label{limD1}
F\left(\eta_1,\frac{\varepsilon}{\sin^2(\eta_1)}\right)=
 {\sqrt{\pi}}\sqrt{\varepsilon} +o(\sqrt{\varepsilon}).
\end{equation*}

\noindent
{\bf Case $D_2$}.
Set $-\frac{\pi}{\alpha^2}=h(\eta_2)-\frac{\ep}{\sin^2\eta_2}$. Since $h(t)$ is an odd function, we have
$$
F\left(\eta_2,-\frac{\ep}{\sin^2\eta_2}\right)= \sin\eta_2 \frac{h^{-1}(\frac{\pi}{\alpha^2})}{\sin({h^{-1}(\frac{\pi}{\alpha^2})})}-\eta_2,
$$ 
which is analogous to  case $D_1$. Hence 
\begin{equation*}\label{limD2}
F\left(\eta_2,-\frac{\ep}{\sin^2\eta_2}\right)=\sqrt{\pi\ep}+o(\sqrt{\varepsilon}).
\end{equation*}

\bigskip

We are now able to compute
 $\displaystyle 
\liminf_{\varepsilon\to 0} \mathcal{F}(\Omega^*_\varepsilon),
$ 
by observing that
$$
\liminf_{\varepsilon\to 0}\mathcal{F}(\Omega^*_\varepsilon)=
\liminf_{\varepsilon\to 0}\frac{\delta(\Omega^*_\varepsilon)}{\lambda^2(\Omega_\varepsilon^*)}\geq
\liminf_{\varepsilon\to 0}\frac{\delta(\Omega^*_\varepsilon) \pi^2}{16\varepsilon^2}
\geq\frac{\pi}{8}\liminf_{\varepsilon\to 0} 
\left[\frac{1}{\varepsilon^2}F\left(\eta_1, \frac{\varepsilon}{\sin^2(\eta_1)}\right)
+
\frac{1}{\varepsilon^2}F\left(\eta_2, \frac{-\varepsilon}{\sin^2(\eta_2)}\right)\right].
$$
The technique consists in combining the behaviour of the function $F(x,y)$ for $x=\eta_i$ and $y=\pm\frac{\ep}{\sin^2\eta_i}$. This obviously depends on the behaviour of the angles $\eta_i,\theta_j$ ($i,j\in\{1,..,4\}$). Hence all the different possible situations  have to be considered.
In all the cases, except the case $(A_1,A_2)$,   $\liminf_{\varepsilon\to 0}\mathcal{F}(\Omega^*_\varepsilon)$ is infinite. 
In the case $(A_1,A_2)$, the liminf is finite, but larger than $\frac{\pi}{8(4-\pi)}$. Indeed thanks to (\ref{limA1}) and (\ref{limA2}), it holds 
\begin{equation}\label{eq=}
 \liminf_{\varepsilon\to 0}\frac{\delta(\Omega_\varepsilon^*)}{\lambda^2(\Omega_\varepsilon^*)}
\geq \frac{\pi}{8}
\left[\frac{\cos(\hat{\eta}_1)}{8(\sin(\hat{\eta}_1)-\hat{\eta}_1\cos(\hat{\eta}_1))}+
\frac{\cos(\hat{\eta}_2)}{8(\sin(\hat{\eta}_2)-\hat{\eta}_2\cos(\hat{\eta}_2))}\right],
\end{equation}
since the terms in $\varepsilon$ cancel each other. Now, by the convexity of the function $x\mapsto 
\frac{\cos x}{8(\sin x- x\cos x)}$, it is easy to see that
the minimum of the above function of $(\hat{\eta}_1,\hat{\eta}_2)$ is attained for
$(\hat{\eta}_1,\hat{\eta}_2)=(\pi/4,\pi/4)$, that is,
\begin{equation}\label{A1A2}
\lim_{\varepsilon\to 0}\frac{\delta(\Omega_\varepsilon^*)}{\lambda(\Omega_\varepsilon^*)^2}
\geq
\frac{\pi}{4}
\frac{\cos(\pi/4)}{8(\sin(\pi/4)-\pi/4\cos(\pi/4))}=\frac{\pi}{8(4-\pi)}>0.44.
\end{equation}
\end{proof}

%%%%%%%%%%%%%%%%%%%%%%%%%%%%
In the following proposition we prove that  the symmetrization of Definition \ref{rearrangement} makes $\mathcal{F}$  asymptotically decreasing. This is the key point of our approach.
\begin{prop}\label{prop_simmetrizzazione}
For every $\alpha>0$ there exists $\beta>0$ such that for every $\Omega$ transversal to an optimal ball $B$ with $\lambda(\Omega)\leq \beta$, one has 
$
\mathcal{F}(\Omega^*)
\leq
\mathcal{F}(\Omega)
+\alpha
$.
\end{prop}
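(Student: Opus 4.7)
The plan is to first show that the rearrangement $\Omega\mapsto\Omega^{*}$ preserves the Fraenkel asymmetry exactly, so that the inequality $\mathcal{F}(\Omega^{*})\leq\mathcal{F}(\Omega)+\alpha$ reduces to an asymptotic comparison of perimeters. The identity $\lambda(\Omega^{*})=\lambda(\Omega)$ follows from two observations. By construction $|\Omega^{*}\Delta B|=|\Omega^{IN}|+|\Omega^{OUT}|=|\Omega\Delta B|$. Moreover $\Omega^{*}$ inherits two orthogonal axes of symmetry through the centre of $B$ (the common perpendicular bisector of $\sigma_{12}$ and $\sigma_{34}$, and that of $\sigma_{23}$ and $\sigma_{41}$), so every optimal ball for $\Omega^{*}$ must be centred at their intersection. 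Hence $B$ itself is optimal for $\Omega^{*}$, and the claim reduces to the perimeter bound $P(\Omega^{*})\leq P(\Omega)+2\pi\alpha\lambda^{2}(\Omega)$.

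Next I would apply the classical isoperimetric inequality with fixed endpoints to each connected component of $\Omega^{OUT}$ and of $\Omega^{IN}$: the bounding piece of $\partial\Omega$ can be replaced by a circular arc with the same endpoints on $\partial B$ enclosing the same area, without increasing its length. This produces a set $\widehat\Omega$ with $P(\widehat\Omega)\leq P(\Omega)$ whose rearrangement coincides with $\Omega^{*}$ (since $\gamma^{OUT}$, $\gamma^{IN}$ and the total cap areas are preserved). One may therefore assume without loss of generality that $\Omega$ itself is a finite union of circular caps attached to or cut from $B$, parameterised by base half-angles $\alpha_{i}$ and enclosed areas $a_{i}$ on the outside, and $\beta_{j},b_{j}$ on the inside.

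Using the function $F$ introduced in the proof of Theorem~\ref{casABCD}, the two perimeter excesses can then be written as
\begin{align*}
P(\Omega)-2\pi &= 2\sum_{i}F\!\left(\alpha_{i},\tfrac{a_{i}}{\sin^{2}\alpha_{i}}\right)+2\sum_{j}F\!\left(\beta_{j},-\tfrac{b_{j}}{\sin^{2}\beta_{j}}\right), \\
P(\Omega^{*})-2\pi &= 4\,F\!\left(\eta_{1},\tfrac{\varepsilon}{\sin^{2}\eta_{1}}\right)+4\,F\!\left(\eta_{2},-\tfrac{\varepsilon}{\sin^{2}\eta_{2}}\right),
\end{align*}
subject to $\sum 2\alpha_{i}=4\eta_{1}=\gamma^{OUT}$, $\sum a_{i}=2\varepsilon=|\Omega^{OUT}|$ and the analogous identities on the inside. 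The heart of the proof is then to establish the functional inequality $2F(\eta_{1},\varepsilon/\sin^{2}\eta_{1})\leq\sum_{i}F(\alpha_{i},a_{i}/\sin^{2}\alpha_{i})+o(\lambda^{2})$ (and its inside analogue) as $\lambda\to 0$. I would prove this by splitting into the asymptotic regimes $A_{i}$--$D_{i}$ of Theorem~\ref{casABCD}, combining the corresponding Taylor expansions of $F$ with convexity-type estimates such as the Cauchy--Schwarz inequality $(\sum a_{i})^{2}\leq\sum\alpha_{i}^{3}\cdot\sum a_{i}^{2}/\alpha_{i}^{3}$.

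The main obstacle will be the uniformity of this comparison across all admissible configurations of caps: different numbers and sizes of caps place the inside and outside contributions into different regimes of the case analysis, and the leading $O(\varepsilon)$ terms in the expansion of $F$ cancel only after pairing the inside and outside contributions correctly. The first-order optimality condition of Proposition~\ref{cdt_optimalite} should play a role here in ruling out the unbalanced configurations that would otherwise make the $O(\varepsilon^{2})$ excess too large, ensuring that $P(\Omega^{*})-P(\Omega)$ can be made smaller than $2\pi\alpha\lambda^{2}$ once $\lambda(\Omega)\leq\beta$ is small enough.
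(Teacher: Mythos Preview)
Your architecture is right and matches the paper: preserve $\lambda$ via the symmetry of $\Omega^{*}$, apply Dido on each cap to reduce to circular arcs, and compare the second–order perimeter excess. The gap is precisely at what you call ``the heart of the proof''.

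The inequality you want, $2F(\eta_{1},\cdot)\leq\sum_{i}F(\alpha_{i},\cdot)+o(\lambda^{2})$, is \emph{false} under the constraints $\sum a_{i}=2\varepsilon$ and $\sum 2\alpha_{i}=\gamma^{OUT}$ alone. After optimising the areas $a_{i}$ by Lagrange multipliers, the second–order term becomes proportional to $\bigl(\sum_{i}1/f(\alpha_{i})\bigr)^{-1}$ with $f(x)=\cos x/(\sin x-x\cos x)$, so minimising the perimeter excess amounts to \emph{maximising} $\sum_{i}\tan\alpha_{i}$ under $\sum\alpha_{i}=\gamma^{OUT}/2$. By convexity of $\tan$, a \emph{single} arc with $\alpha\to\pi/2$ beats any splitting into two, and the supremum is $+\infty$. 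Neither your Cauchy--Schwarz estimate nor the $A_{i}$--$D_{i}$ case analysis from Theorem~\ref{casABCD} can repair this: the former is not sharp at the two–arc configuration, and the latter classifies limits for a fixed configuration rather than comparing configurations.

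What actually pins the problem down is a \emph{quantitative} use of the optimality condition of Proposition~\ref{cdt_optimalite}, not merely a balancing role. The relations $\sum(\cos\varphi_{2i}-\cos\varphi_{2i-1})=0$ and the analogue for sines are exploited through a geometric projection argument to prove the uniform bound
\[
\alpha_{i}\ \leq\ \frac{\pi}{2}-\frac{\gamma^{IN}}{4}\qquad\text{for every }i,
\]
which gives compactness of the admissible $\alpha_{i}$'s strictly below $\pi/2$. With this bound in hand, a second Lagrange–multiplier step on $\max\sum\tan\alpha_{i}$ under all four constraints of \eqref{constraints} forces the $\alpha_{i}$'s to be equal, and then the value $k\tan(\gamma^{OUT}/2k)$ is maximal at the smallest admissible $k$, namely $k=2$ (the case $k=1$ being excluded by the same optimality constraints, cf.\ Remark~\ref{remarknumberintersection}). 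That is exactly the configuration $\Omega^{*}$. Your proposal correctly senses that Proposition~\ref{cdt_optimalite} is needed, but you must use it to derive this compactness bound; without it the minimisation has no interior optimum and the comparison with $\Omega^{*}$ fails.
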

\newcommand{\onda}{(10,0) to[out=90,in=300] (4,4) to[out=120,in=0] (1,10) to[out=180,in=45] (-4,6) to[out=225,in=30] (-10,8) to[out=210,in=90] (-8,0) to[out=270,in=120] (-11,-4) to[out=300, in=135] (-5,-3) to[out=315,in=180] (0,-6) to[out=0,in=240](2,-2) to[out=60,in=270](10,0)}
\newcommand{\cerchio}{(-2,2)circle(8)}
	\newcommand{\figuraEp}{
		\begin{scope}[rotate=45]
		%\draw (-2,2)++(180:9)node{$B$};\draw[blue] (-10,8)node[left]{$\Omega$};
		\fill[gray!30] \onda;	
	 \fill[gray!30]\cerchio;
		\begin{scope}
		\clip \onda;
		\fill[white]\cerchio ;
		\end{scope}
			\draw[thick, blue] \onda; \draw[thick] \cerchio;
			\fill (-2,2)++(-22:8)node[below right]{$A_1$} circle(2pt);\fill (-2,2)++(4:8)node[above]{$A_2$} circle(2pt);\fill (-2,2)++(53:8)node[above right]{$A_3$} circle(2pt);	\fill (-2,2)++(77:8)node[above left]{$A_4$} circle(2pt); \fill (-2,2)++(136:8)node[above left]{$A_5$} circle(2pt);	\fill (-2,2)++(165:8) circle(2pt);\fill (-2,2)++(207:8) circle(2pt);\fill (-2,2)++(223:8) circle(2pt);\fill (-2,2)++(165:8) circle(2pt);\fill (-2,2)++(277:8) node[below right]{$A_{2p-1}$} circle(2pt);\fill (-2,2)++(293:8) node[below right]{$A_{2p}$}circle(2pt);
		\draw[<-, blue] (9,0)--(11,2)node[above]{area $\ep_1$};
		\draw[<-, blue ] (4,6)--(6,7)node[above]{area $\ep_2$};
		\draw[<-, blue] (1.3,9.6)--(2,11)node[above]{area $\ep_3$};	
		\draw[<-, blue] (-4,8)--(-4,11)node[above]{area $\ep_4$};
		\draw[<-,blue] (2,-4)--(5,-7)node[right]{area $\ep_{2p}$};
		\draw (-13,0)node{$\cdots$};\draw[blue] (-9,11)node{$\cdots$};
		% % % % % % % % % % % %  angoli phi_i
		\fill[gray!30] (-2,2)--(-2,2)++(-45:2)arc (-45:4:2)--(-2,2);
		\fill[gray!70] (-2,2)--(-2,2)++(-45:1.5)arc(-45:-22:1.5)--(-2,2);
		\draw[red] (-2,2)node[below]{$O$};
		\draw[red, dashed] (-2,2)++(-22:8)--(-2,2);\draw[red, dashed] (-2,2)++(4:8)--(-2,2);\draw[red, dashed](-2,2)++(53:8)--(-2,2);
		\draw[red, thick] (-2,2)++(-45:1.5)arc (-45:-22:1.5);\draw[<-, red] (-2,2)++(-40:1)--(-2,0.5)node[below]{$\varphi_1$};
		\draw[thick, red] (-2,2)++(-45:2)arc (-45:4:2); \draw[<-, red] (-2,2)++(-10:1.7)--(0,-2)node[below]{$\varphi_2$};
		\draw[thick, red] (-2,2)++(-22:2.5)arc (-22:4:2.5); \draw[red] (-2,2)++(0:2.5) node[right]{\small{$2\eta_1$}};
 	\draw[red] (-2,2)++(-45:17)node[above]{$x$}--(-2,2); \draw[red] (-2,2)++(135:15)--(-2,2);
		\end{scope}	
	}

\begin{proof}
% % %  !!!!!!!
Let $0\leq \varphi_1< \varphi_2<...< \varphi_{2p}\leq 2\pi$ be the angles determined by the intersection points $A_1,...,A_{2p}$ 
between $\partial \Omega$ and $\partial B$ defined as $\varphi_i=(Ox,OA_i)$ as shown in Figure \ref{fig-epi}.
Define  $\eta_j=(\varphi_{j+1}-\varphi_{j})/2$.

\begin{figure}[h]
	\centering
	\begin{tikzpicture}[x=3mm,y=3mm]
	\figuraEp
%	\begin{scope}[xshift=8cm]
%	\figuraEpp
%	\end{scope}
	\end{tikzpicture}
	\caption{The points $A_i$, the angles $\varphi_i$ and the areas $\ep_i$ in the proof of Proposition \ref{prop_simmetrizzazione}.}\label{fig-epi}
\end{figure}
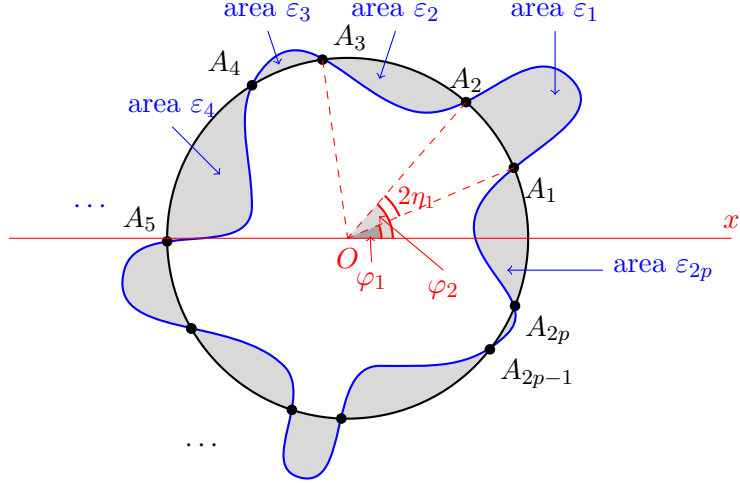

Let $\Gamma_{OUT}=(A_1, A_2)\cup (A_3, A_4)\cup...\cup(A_{2p-1},A_{2p})\subset \partial B$ 
and 
$\Gamma_{IN}=(A_2, A_3)\cup (A_4, A_5)\cup...\cup (A_{2p}, A_1)\subset \partial B$.
According to the figure, let us denote by $\ep_i$, for $i=1,...,2p$, the area of  the connected component of $\Omega\Delta B$ whose boundary contains the points $A_i, A_{i+1}$. 
Using the solution of the Dido problem on each of these connected components (where the arc $A_i, A_{i+1}$
is fixed), we can replace all components of $\Omega\Delta B$
by a set of same measure, bounded by two arcs of circle, then
it holds
\begin{equation}\label{291}
\mathcal{F}(\Omega) \ge \dfrac{\pi^2}{4\ep^2}\left( \frac{\sum_{i=1}^{2p} \mathcal{H}^1(C_i)}{2\pi}-1 \right),
\end{equation}
where $4\ep=|\Omega\Delta B|=\sum_{i=1}^{2p}\ep_i$ and $C_i$ are arcs of circles with end-points $A_i, A_{i+1}$ such that the area of the region enclosed by the arc $C_i$ and the ball $B$ equals $\ep_i$.
Since the area of $\Omega$ is $\pi$, we have  
\begin{equation}\label{epi}
2\varepsilon =\sum_{i=1}^p \varepsilon_{2i-1}=\sum_{i=1}^p \varepsilon_{2i}.
\end{equation}

We are going to minimize the perimeter of $\Omega$ over each of the sets  $\Omega\setminus B$ and $B\setminus \Omega$, separately. We will prove that the minimizer is $\Omega^*$, in both cases. This will imply that the minimizer of  $\mathcal{F}$ is $\Omega^*$.

Notice that the set $\Omega$ satisfies the following conditions:
\begin{equation}\label{constraints}
\begin{cases}
\sum_{i=1}^p(\varphi_{2i}-\varphi_{2i-1})=\mathcal{H}^1(\Gamma_{OUT})=\gamma_{OUT},\\
\sum_{i=1}^{p-1}(\varphi_{2i+1}-\varphi_{2i})+\varphi_{1}-\varphi_{2N}=\mathcal{H}^1(\Gamma_{IN})=\gamma_{IN},\\
\sum_{i=1}^p (\cos(\varphi_{2i})-\cos(\varphi_{2i-1}))=0,\\
\sum_{i=1}^p (\sin(\varphi_{2i})-\sin(\varphi_{2i-1}))=0,
\end{cases}
\end{equation}
where the last two constraints are a consequence of Proposition \ref{cdt_optimalite} and
$$
\lambda(\Omega)=\dfrac{4\varepsilon}{\pi}.
$$

Let us consider the set $\Omega\setminus B$.
Let $f(x)=\frac{\cos x}{\sin x-x\cos x}$; according to the analysis of Case $A_1$ in the proof of Theorem \ref{casABCD},  we have to study the minimization problem
\begin{equation}\label{minimizzazioneasintotica}
\min\left\{\sum_{i=1}^p \left[\frac{\varepsilon_{2i-1}}{2}+\frac{\varepsilon_{2i-1}^2}{8} f(\eta_{2i-1})+o(\varepsilon_{2i-1}^2)\right]\right\}\,,
\end{equation}
under the constraints in (\ref{epi}), (\ref{constraints}).
Instead of solving the complete minimization problem we are going to consider only the first two terms of the developement in (\ref{minimizzazioneasintotica})
that is, we minimize the function
$$
G(\varepsilon_1,\varepsilon_3,...,\varepsilon_{2p-1},\eta_1, \eta_3,...,\eta_{2p-1})=\sum_{i=1}^p {\varepsilon_{2i-1}^2} f(\eta_{2i-1}),
$$ 
under  constraints  (\ref{epi}) and (\ref{constraints}).

Observe that the $\varepsilon_i$'s are in a compact set. Let us first solve the minimization problem with respect to the $\varepsilon_i$.
If we compute the derivative of $G$ with respect to $\varepsilon_i$, by  constraint (\ref{epi}), we get, for every $i=1,..,p$
\begin{equation}\label{firstlambda}
2\varepsilon_{2i-1}f(\eta_{2i-1})=\lambda_0,
\end{equation}
where $\la_0$ is a Lagrange multiplier, that is,
\begin{equation}
\label{secondlambda}
\lambda_0=\frac{4\varepsilon}{\sum_{i=1}^p\frac{1}{f(\eta_{2i-1})}}.
\end{equation}
If we replace $\varepsilon_{2i-1}$ in the expression of  $G$,  problem (\ref{minimizzazioneasintotica}) reduces to find
$
\max \sum_{i=1}^p\frac{1}{f(\eta_{2i-1})}
$, that is,  
\begin{equation}\label{secondaminimizzazioneasintotica}
\max \sum_{i=1}^p \tan \eta_{2i-1}\,,
\end{equation}
due to  constraint (\ref{constraints}).

We are going to prove that $\eta_1, \eta_3..., \eta_{2p-1}$ are in a compact set contained in $[0,\frac{\pi}{2})$
and the existence of a maximizer for (\ref{secondaminimizzazioneasintotica}) will follow.
First of all, we observe  that $\eta_{2i-1}<\frac{\pi}{2}$ for every $i\in \{1,...,p\}$. Indeed, if $\eta_{2i-1}>\frac{\pi}{2}$ for some $i$, then $\lambda_0$ would be negative by (\ref{firstlambda}). 
If $\eta_{2i-1}=\frac{\pi}{2}$ for some $i$,  then $\lambda_0=0$ and we would find {one arc}. 
Notice that this is a contradiction, as observed in Remark \ref{remarknumberintersection}.

Since the problem is rotations invariant, we can assume that $(A_1,A_2)$ is the longest arc and $A_1\equiv(x_1,y)$, $A_2\equiv(-x_1,y)$, with $y>0$, that is, $\varphi_2=\pi-\varphi_1$.  Let us assume that 
$$
\frac{\pi}{2}< \varphi_3\leq  \varphi_4\leq ...\leq \varphi_m<\pi,
\qquad0<\varphi_{q+1}\leq \varphi_{q+2}\leq ...\leq \varphi_{2p}<\frac{\pi}2\,,
$$
for some $m, q \in \N$. 
We claim that if $\mathcal{H}^1(\Gamma_{IN}\cap \{y>0\})\geq 
{\mathcal{H}^1(\Gamma_{IN})}/{2}$, then $\varphi_1\geq {\mathcal{H}^1(\Gamma_{IN})}/{4}$. 
Indeed, constraints (\ref{constraints}) imply that 
\begin{equation}\label{sum_cos}
-2 \cos \varphi_1= \cos \varphi_2 - \cos \varphi_1=
(\cos \varphi_3-\cos \varphi_4)+...+(\cos \varphi_{2p-1}-\cos \varphi_{2p}).
\end{equation}
We are going to estimate from below the right hand side. We will divide our analysis according to the parity of $m$ and $q$. 

If $m$ is even, then in the right hand side of (\ref{sum_cos})
all the terms involving indices less or equal to $m$ are positive.
Assume that $m$ is odd.  Then in the right hand side of (\ref{sum_cos})
the terms $\cos \varphi_3-\cos \varphi_4, ..., \cos \varphi_{2j-1}-\cos \varphi_{2j}$, ... , 
$\cos \varphi_{m-2}-\cos \varphi_{m-1}$ are positive. 
We rewrite $\cos \varphi_m-\cos \varphi_{m+1}=(\cos \varphi_m-\cos \pi)+(\cos \pi-\cos \varphi_{m+1})$. 
The first term is positive and the second one will be treated later.
In conclusion, the sum of these terms is greater than $-1$. 

The points in the first quadrant $\{x\geq 0\}\cap \{y\geq 0\}$ will be treated in the same way.
For the other points, we have to estimate the measure of the projection on the $x$ line of $\Gamma_{IN}\cap \{y\leq 0\}$.
To do that, we observe that if $\Gamma$ is an arc in $\{y\leq 0\}$, then the measure of its projection $P_x$ on 
the $x$ line is greater or equal to 
$
2\left(1-\cos \frac{\mathcal{H}^1(\Gamma)}{2}\right).
$
Since we are assuming that
 $\mathcal{H}^1(\Gamma_{IN}\cap \{y\leq 0\})\leq 
\frac{\mathcal{H}^1(\Gamma_{IN})}{2}$, then 
$$
\mathcal{H}^1(P_x(\Gamma_{IN}\cap \{y\leq 0\}))\geq 
2\left[1-\cos \left(\frac{\mathcal{H}^1(\Gamma_{IN}\cap \{y\leq 0\})}2\right)\right]\geq
2\left(1-\cos \frac{\mathcal{H}^1(\Gamma_{IN})}4\right).
$$ 
The above  estimates  imply that $\cos\varphi_1\leq \cos(\frac{\mathcal{H}^1(\Gamma_{IN})}{4})$ and then 
$$
\eta_i\leq \eta_1=\frac{\varphi_2-\varphi_1}{2}\leq \frac{\pi}{2}-\frac{\mathcal{H}^1(\Gamma_{IN})}{4}.
$$

If $\mathcal{H}^1(\Gamma_{IN}\cap \{y>0\})\leq 
{\mathcal{H}^1(\Gamma_{IN})}/{2}$, then $\varphi_1\geq {\mathcal{H}^1(\Gamma_{IN})}/{4}$. 
Indeed, we get the same estimate as in the previous case:
$$
\mathcal{H}^1(P_x(\Gamma_{IN}\cap \{y\leq 0\}))\geq 
2\left[1-\cos \left(\frac{\mathcal{H}^1(\Gamma_{IN}\cap \{y\leq 0\})}2\right)\right]\geq
2\left(1-\cos \frac{\mathcal{H}^1(\Gamma_{IN})}4\right).
$$ 

We now study problem (\ref{secondaminimizzazioneasintotica}). Let us write the optimality conditions,
we get
\begin{eqnarray}\label{derivation}
-\frac 12(1+\tan^2 \eta_i)&=&-\lambda_0+\lambda_1 \sin \varphi_{2i-1}-\lambda_2\cos \varphi_{2i-1},
\\ \nonumber
\frac 12(1+\tan^2 \eta_i)&=&\lambda_0-\lambda_1 \sin \varphi_{2i}+\lambda_2 \cos \varphi_{2i},
\end{eqnarray}
where $\la_0,\la_1,\la_2$ are Lagrange multipliers.

Assume  now $(\lambda_1,\lambda_2)\neq (0,0)$. Let $a=\sqrt{\lambda_1^2+\lambda_2^2}$. Then $\lambda_1=a\sin\theta_0$ and 
$\lambda_2=a\cos\theta_0$. Summing up in (\ref{derivation}),
we get for any $i$ $\cos(\theta_0+\varphi_{2i})=\cos(\theta_0+\varphi_{2i-1})$. The only possibility is $\theta_{0}+\varphi_{2i}=-\theta_{0}-\varphi_{2i-1}$, that is, the $\eta_i$'s are equal.
Therefore the functional $\mathcal{F}$ is $k\tan(\frac{\mathcal{H}^1(\Gamma_{OUT})}{2k})$, for some $k\in \N$. 
The maximum is attained at the minimal possible value of $k$, which is 2 (notice, indeed, that $k=1$ is impossible due to the constraints).
For $k=2$ we obtain a geometric configuration which coincides with the symmetrized set $\Omega^*$.

Assume   $(\lambda_1,\lambda_2)=(0,0)$. For every $i\in \{1,...,p\}$, $\tan^2 \eta_{2i-1}=2\lambda_0-1$. Therefore $\eta_{2i-1}=\frac{\pi}{2}\pm h$.
 Since $\sum_{i=1}^p \eta_{2i-1}<\pi$, there exist $l$ elements $\eta_i$ equal to $\frac{\pi}{2}-h$ and either 0 or 1 element equal to  $\frac{\pi}{2}+h$.
The last case is impossible, since $l(\frac{\pi}{2}-h)+\frac{\pi}{2}+h<\pi$ is in contradiction with $h\leq \frac{\pi}{2}$.
Therefore we have 
$$
\frac{\sum_{i=1}^{2p}\mathcal{H}^1(C_i)}{2\pi}-1=l\tan\left(\frac{\mathcal{H}^1(\Gamma_{OUT})}{2l}\right),
$$
 as seen in the previous case.
We thus find  that the minimum is attained for $l=2$ and a similar conclusion holds true.

The analysis of $B\setminus\Omega$ is  analogous. Therefore $\Omega^*$ is a minimizer of (\ref{secondaminimizzazioneasintotica}). 

Hence, in every cases, by (\ref{minimizzazioneasintotica}) it holds
$$
\sum_{i=1}^{2p}\mathcal{H}^1(C_i)\ge P(\Omega^*) -\tilde{\alpha},
$$
for some $\tilde{\alpha}>0$.
By coupling the above inequality  with (\ref{291}), and recalling that $\la(\Omega^*)=\la(\Omega)=\frac{4\ep}{\pi}$, we obtain that   
\begin{equation}\label{292}
\mathcal{F}(\Omega)\ge\mathcal{F}(\Omega^*) -\alpha
\end{equation}
for some $\alpha>0$.

\end{proof}

%%%%%%%%%%%%%%%%%%%%%%%%%%%%%

We now describe the possible behaviour of any sequence converging to a ball. A consequence of the previous results is
\begin{cor}\label{corfond}
Let $\{\Omega_{\varepsilon} \}_{\ep>0}$ be a sequence of sets converging to a ball $B$ such that  
$|B\Delta \Omega_{\varepsilon} |= {4\varepsilon}$.
Then
\begin{equation}\label{mino}
\liminf_{\varepsilon\to 0}\mathcal{F}({\Omega_{\varepsilon}}) %=\liminf_{\varepsilon\to 0}\frac{\delta({\Omega_{\varepsilon}})}{\lambda^2({\Omega_{\varepsilon}})} 
\geq  \frac{\pi}{8(4-\pi)}.
\end{equation}
\end{cor}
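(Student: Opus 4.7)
The plan is to combine the two main tools already developed in this section: Theorem \ref{casABCD}, which gives the desired lower bound for the symmetrized sets $\Omega_{\varepsilon}^*$, and Proposition \ref{prop_simmetrizzazione}, which says the symmetrization decreases $\mathcal{F}$ up to an arbitrarily small error. Putting these together immediately yields the conclusion.

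More precisely, first I would use the scale invariance of $\mathcal{F}$ to normalize so that $|\Omega_{\varepsilon}|=\pi=|B|$. Since $|B\Delta\Omega_{\varepsilon}|=4\varepsilon$, the Fraenkel asymmetry satisfies
\[
\lambda(\Omega_{\varepsilon})\le \frac{|B\Delta\Omega_{\varepsilon}|}{|\Omega_{\varepsilon}|}=\frac{4\varepsilon}{\pi}\xrightarrow[\varepsilon\to0]{}0,
\]
so that for every $\beta>0$ the hypothesis $\lambda(\Omega_{\varepsilon})\le\beta$ of Proposition \ref{prop_simmetrizzazione} is eventually fulfilled. For each $\varepsilon$ small enough I would pick an optimal ball $B_{\varepsilon}$ realizing $\lambda(\Omega_{\varepsilon})$ (which exists by compactness, since the ball of optimal center must lie in a bounded region once $\lambda$ is small), and form the rearrangement $\Omega_{\varepsilon}^{*}$ of Definition \ref{rearrangement} with respect to $B_{\varepsilon}$. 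By the construction of $\Omega_{\varepsilon}^{*}$ the symmetric difference satisfies $|B_{\varepsilon}\Delta\Omega_{\varepsilon}^{*}|=|B_{\varepsilon}\Delta\Omega_{\varepsilon}|\le 4\varepsilon$, so the sequence $\Omega_{\varepsilon}^{*}$ is of the form required by Theorem \ref{casABCD}.

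The conclusion is then a two-line computation. Fix $\alpha>0$. By Proposition \ref{prop_simmetrizzazione}, for $\varepsilon$ small,
\[
\mathcal{F}(\Omega_{\varepsilon}^{*})\le \mathcal{F}(\Omega_{\varepsilon})+\alpha.
\]
Taking $\liminf$ on both sides and invoking Theorem \ref{casABCD} on the left,
\[
\frac{\pi}{8(4-\pi)}\le \liminf_{\varepsilon\to0}\mathcal{F}(\Omega_{\varepsilon}^{*})\le \liminf_{\varepsilon\to0}\mathcal{F}(\Omega_{\varepsilon})+\alpha.
\]
Since $\alpha>0$ is arbitrary, inequality \eqref{mino} follows.

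The one technical obstacle is the transversality hypothesis in Proposition \ref{prop_simmetrizzazione}: the corollary is stated for an arbitrary sequence converging to $B$, but the symmetrization/optimality machinery was developed for sets transversal to an optimal ball. For non-transversal $\Omega_{\varepsilon}$ one has to invoke the natural extension of Definition \ref{rearrangement} described in the Remark following it (where the boundary of $\Omega_{\varepsilon}^{*}$ may include four arcs of $\partial B_{\varepsilon}$, a case already covered by Theorem \ref{casABCD} thanks to the analysis of cases $B_i,C_i,D_i$); alternatively one may approximate $\Omega_{\varepsilon}$ by a transversal perturbation $\widetilde{\Omega}_{\varepsilon}$ with $|\Omega_{\varepsilon}\Delta\widetilde{\Omega}_{\varepsilon}|$ and $|P(\Omega_{\varepsilon})-P(\widetilde{\Omega}_{\varepsilon})|$ of order $o(\varepsilon)$, so that $\mathcal{F}(\widetilde{\Omega}_{\varepsilon})=\mathcal{F}(\Omega_{\varepsilon})+o(1)$. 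Either route absorbs the resulting error into the same $\alpha$ above, leaving the final inequality intact.
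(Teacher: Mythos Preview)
Your proposal is correct and follows essentially the same route as the paper: apply Proposition~\ref{prop_simmetrizzazione} to pass from $\Omega_\varepsilon$ to $\Omega_\varepsilon^*$ at the cost of an arbitrary $\alpha$, then invoke Theorem~\ref{casABCD} on the symmetrized sequence and let $\alpha\to0$. The paper handles the transversality issue exactly via your second option---replacing $\Omega_\varepsilon$ by a nearby transversal set $\widehat{\Omega}_\varepsilon$ with $|\mathcal{F}(\widehat{\Omega}_\varepsilon)-\mathcal{F}(\Omega_\varepsilon)|\le\alpha/2$ and $\lambda(\widehat{\Omega}_\varepsilon)\le\beta$, then splitting the total error as $\alpha/2+\alpha/2$.
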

\begin{proof}
	Let $\alpha>0$ and let $\beta$ be the corresponding value to $\alpha/2$ given by  Proposition \ref{prop_simmetrizzazione}. 
Let $B_{\varepsilon}$ be an optimal ball for $\Omega_{\varepsilon}$ so that $\lambda(\Omega_{\varepsilon})\leq {4\varepsilon}/{\pi}$.
We  choose $\ep$ sufficiently small such that $\la(\Omega_\ep)<\beta/2$.
We can modify  $\Omega_{\varepsilon}$ into a transversal set $\widehat{\Omega}_{\varepsilon}$ to its optimal ball $B_{\varepsilon}$ such that
\begin{eqnarray*}
&|\widehat{\Omega}_{\varepsilon}|=\pi,&\\
&\displaystyle \left|\frac{\delta(\widehat{\Omega}_{\varepsilon})}{\lambda^2(\widehat{\Omega}_{\varepsilon})}-\frac{\delta(\Omega_{\varepsilon})}{\lambda^2(\Omega_{\varepsilon})}\right|\le \frac\alpha 2,&\\
&\lambda(\widehat{\Omega}_{\varepsilon})\leq \beta.&
\end{eqnarray*}
Since $\widehat{\Omega}_{\varepsilon}$ is transversal to $B_\varepsilon$,
by Proposition \ref{prop_simmetrizzazione} one has
$$
\frac{\delta(\widehat{\Omega}_{\varepsilon}^*)}{\lambda^2(\widehat{\Omega}_{\varepsilon}^*)}-\frac{\delta(\widehat{\Omega}_{\varepsilon})}{\lambda^2(\widehat{\Omega}_{\varepsilon})}\leq \frac\alpha 2,
$$
and summing up we get
$$
\frac{\delta(\widehat{\Omega}_{\varepsilon}^*)}{\lambda^2(\widehat{\Omega}_{\varepsilon}^*)}\leq \frac{\delta({\Omega_{\varepsilon}})}{\lambda^2({\Omega_{\varepsilon}})}+ \alpha. 
$$
By Theorem \ref{casABCD}, one has
$$
\liminf_{\varepsilon\to 0}\frac{\delta({\Omega_{\varepsilon}})}{\lambda^2({\Omega_{\varepsilon}})} +\alpha\geq\frac{\pi}{8(4-\pi)},
$$
and this entails the result, as $\alpha$ is arbitrary.
\end{proof}

We underline that, although inequality (\ref{mino}) is sufficient  to prove that the sequences converging to the ball are not ``competitive'',  we prove also
that the value $\frac{\pi}{8(4-\pi)}$ is sharp, in the sense that there exists a sequence $\{\Omega_n\}$ such that $\mathcal{F}(\Omega_n)$ 
gives this value at the limit. For that purpose, we need a preliminary result for optimal balls
which has, however, its own interest.

%%%%%%%%%%%%%%%%%%%%%%%%%%%
%%%%%%%%%%%%%%%%%%%%%%%%%%%%%%
%%%%%%%%%%%%%%%%%%%%%%%%%%%%%%
%%%%%%%%%%%%%%%%%%%%%%%%%%%%%%

\begin{prop}\label{centre_optimizing_ball}
	Assume that  $\Omega\subset \R^2$ has a symmetry axis $\Pi$ and  is convex in the perpendicular direction.  
	Then there exists an optimal ball  centered  on $\Pi$.\\
	If, moreover, the domain is transversal to its optimal balls and it does not contain segments which are orthogonal to $\Pi$, then all the optimal balls are  centered  on $\Pi$.
\end{prop}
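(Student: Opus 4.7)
The plan is to reduce the problem to a one-dimensional concavity analysis on vertical slices. Up to rotation assume $\Pi = \{v = 0\}$; the convexity of $\Omega$ in the vertical direction together with the symmetry shows that every vertical slice of $\Omega$ has the form $I_u = [-a(u), a(u)]$ for some $a(u) \geq 0$. Normalize $|\Omega| = \pi$ so that any optimal ball has radius $1$.

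For the first assertion, fix $x \in \R$ and consider $f(y) := |\Omega \cap B_{(x,y)}|$. By Fubini,
\[
f(y) = \int_{x-1}^{x+1} \phi_u(y)\,du, \qquad \phi_u(y) := \bigl|[-a(u), a(u)] \cap [y - r(u), y + r(u)]\bigr|,
\]
where $r(u) := \sqrt{1 - (u-x)^2}$. An elementary computation shows that each $\phi_u$ is even, concave and piecewise linear in $y$, equal to its maximum $2\min(a(u), r(u))$ on the symmetric interval $\bigl[-|r(u) - a(u)|,\; |r(u) - a(u)|\bigr]$ and decreasing linearly to $0$ outside. Hence $f$ is even, concave, and maximized at $y = 0$. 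Since $\psi(x, y) := |\Omega \Delta B_{(x,y)}| = 2\pi - 2 f(y)$ under the area constraint, the map $y \mapsto \psi(x, y)$ is minimized at $y = 0$, so from any optimal ball $B_{(x_0, y_0)}$ one produces an optimal ball $B_{(x_0, 0)}$ centred on $\Pi$.

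For the second assertion, suppose $B_{(x_0, y_0)}$ is optimal with $y_0 > 0$ (the case $y_0 < 0$ is symmetric). Then $f(y_0) = f(0)$, and since $\phi_u(y_0) \leq \phi_u(0)$ pointwise, we must have $\phi_u(y_0) = \phi_u(0)$ for a.e.\ $u \in [x_0 - 1, x_0 + 1]$. Reading off the shape of $\phi_u$, this equality holds iff $y_0$ lies in the flat region of $\phi_u$, i.e.\ iff $|r(u) - a(u)| \geq y_0$; equivalently, for a.e.\ $u$,
\[
a(u) \geq r(u) + y_0 \quad \text{or} \quad a(u) \leq r(u) - y_0.
\]

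The decisive step is to upgrade this almost-everywhere dichotomy into a pointwise one, and this is precisely where the hypothesis that no segment orthogonal to $\Pi$ is contained in $\partial \Omega$ enters: it forces the slice profile $u \mapsto a(u)$ to be continuous, since a jump of $a$ at some $u^*$ would produce, by approximation from both sides, a non-trivial vertical segment of $\partial \Omega$ above $u^*$. Consequently $F(u) := a(u) - r(u)$ is continuous on $[x_0 - 1, x_0 + 1]$ and avoids the open interval $(-y_0, y_0)$; by connectedness either $F \geq y_0$ everywhere or $F \leq -y_0$ everywhere. At the endpoints $u = x_0 \pm 1$ one has $r(u) = 0$ and $a(u) \geq 0$, which excludes the second alternative. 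Thus $a(u) \geq r(u) + y_0$ throughout, and integration gives
\[
\pi = |\Omega| \geq \int_{x_0 - 1}^{x_0 + 1} 2 a(u)\, du \geq \int_{x_0 - 1}^{x_0 + 1} 2\bigl(r(u) + y_0\bigr)\, du = \pi + 4 y_0,
\]
contradicting $y_0 > 0$, so $y_0 = 0$. The concavity of the $\phi_u$'s is the elementary observation driving the whole scheme; the main technical difficulty is precisely this upgrade from a.e.\ to pointwise, which relies crucially on the no-vertical-segment assumption to guarantee that the continuous function $F$ cannot cross the forbidden strip, at which point the area constraint delivers the contradiction.
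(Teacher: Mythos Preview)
Your proof is correct and takes a genuinely different route from the paper's.

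The paper argues the uniqueness case first: assuming transversality, it invokes Lemma~\ref{lemmapsi} to compute $\partial\psi/\partial y$ explicitly in terms of the $x$--coordinates of the intersection points $\partial\Omega\cap\partial B$, then shows via a case analysis on the zeros of the auxiliary functions $g_1(x)=x^2+(f(x)-t)^2-1$ and $g_2(x)=x^2+(f(x)+t)^2-1$ that this derivative is strictly positive when the centre has height $t>0$. The existence statement for general $\Omega$ is then recovered by $L^1$ approximation with transversal domains.

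Your argument bypasses the shape--derivative machinery entirely. The observation that each slice function $\phi_u$ is even and concave gives the first assertion immediately, with no approximation step and no transversality hypothesis. For the second assertion you exploit the \emph{flat} part of $\phi_u$ rather than a first--order condition: equality $f(y_0)=f(0)$ forces $y_0$ into the plateau of almost every $\phi_u$, which translates into the pointwise dichotomy $a(u)\geq r(u)+y_0$ or $a(u)\leq r(u)-y_0$; continuity of $a$ (from the no--vertical--segment hypothesis) plus the endpoint behaviour $r(x_0\pm1)=0$ then pins down the first alternative, and the area constraint finishes. Notice that your argument never actually uses the transversality hypothesis, so you in fact prove a slightly stronger uniqueness statement than the paper claims. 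The trade--off is that the paper's derivative formula \eqref{DpsiDy} is reused elsewhere (Proposition~\ref{cdt_optimalite}), so their approach ties into the broader optimality--condition framework, whereas your concavity argument is self--contained but specific to this proposition.
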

\begin{proof}
	We first prove the result for a transversal domain $\Omega$ to an optimal ball and we assume 
	that its boundary does not contain segments which are orthogonal to the axis of symmetry.
	We can assume that $\Pi$ lies on the $x$ axis. The boundary of $\Omega$ will then be given by $y=\pm f(x)$ for some function $f:\R\to\R$ such that $f\geq 0$ in an interval $[a,b]$, and $f(x)\equiv 0$ if $x\le a$ or $x\ge b$.
	Set 
	$$
	\psi (M)=\psi (x,y)=|\Omega \Delta B_M|,
	$$ 
	where $M=(x,y)$ and $B_M$ denotes the unitary ball of center $M$.
	Let $M_1, M_2, ...M_{2p}$ be the intersection points between $\bd\Omega$ and $\bd B_M$ in an counter-clockwise order such that the boundary of $\Omega$ comes into the disk following the standard orientation of the curve (see Figure \ref{Mi-figure} (a)).
	By Lemma \ref{lemmapsi} we have
	\begin{equation*}
		\frac{\partial \psi}{\partial y}=-2\Big(x_1+x_3+...+x_{2p-1} - (x_2+x_4+...+ x_{2p})\Big)\,.
	\end{equation*}
	Hence in the case where $B_M$ is an optimal ball for $\Omega$, we have that $x_1+x_3+...+x_{2p-1} - (x_2+x_4+...+ x_{2p})=0$. 
	
	In particular we denote by $u_i$ the $x$-coordinate of the point $M_i$ belonging to the half plane $\Pi^+=\{(x,y)\ :\ y\ge 0\}$ and by $z_j$ the $x$-component of the point $M_j$ belonging to $\Pi^-=\{(x,y)\ :\ y<0\}$.
	For simplicity we can assume $b\ge\sqrt{1-t^2}$  in such a way that $u_1>u_2>...>u_k$ and $z_{k+1}<z_{k+2}<...<z_{2p}$. The case $u_{2p}>u_1>u_2>...>u_k$ corresponds to Figure \ref{Mi-figure} (b) and can be treated in an analogous way.
	
	\newcommand{\Mi}{
		\begin{tikzpicture}[x=3mm,y=3mm]
		\draw[->] (-11,0)--(6,0)node[right]{$x$}; 	
		\draw[thick] (5,0) to[out=90,in=0] (0,6) to[out=180,in=30] (-5,3) to[out=210,in=90] (-10,0) to[out=270,in=150] (-5,-3) to[out=330,in=180] (0,-6) to[out=0,in=270] (5,0);
		\fill[black] (5,0)node[above right]{$b$} circle(2pt); \fill[black] (-10,0)node[above left]{$a$} circle(2pt);
		\draw[blue] (-2,0.5) circle(6);
		\fill[red] (0.5,6)node[above]{$M_1$} circle(2pt);\fill[red] (-7.8,2)node[above]{$M_2$} circle(2pt);\fill[red] (-7.2,-2.25)node[below]{$M_3$} circle(2pt);\fill[red] (-1.98,-5.5)node[below]{$M_4$} circle(2pt);
		\draw[red, dashed] (0.5,6)--(0.5,0)node[above right]{$u_1$};\draw[red, dashed]  (-7.8,2)--(-7.8,0)node[above left]{$u_2$};
		\draw[red, dashed] (-7.2,-2.25)--(-7.2,0) node[above right]{$z_3$};\draw[red, dashed] (-1.98,-5.5)--(-1.98,0)node[above left]{$z_4$};
		\draw (4,4)node[above]{$\Omega$};
		\begin{scope}[scale=1.7]
		\draw (0,-5)node[below]{(a)};
		% % % % % % % % % % % % % % %
		\begin{scope}[xshift=4cm]
		\draw[->] (-6,0)--(6,0)node[right]{$x$}; 	
		\draw[thick] (4,0) to[out=90,in=0] (0,4.5) to[out=180,in=60] (-2,2) to[out=240,in=90] (-4,0) to[out=270,in=120] (-2,-2) to[out=300,in=180] (0,-4.5) to[out=0,in=270] (4,0);
		\fill[black] (4,0)node[above right]{$b$} circle(1pt); \fill[black] (-4,0)node[above left]{$a$} circle(1pt);
		\draw[blue] (0.5,0.5) circle(3.7);
		\fill[red] (1.75,4)node[above right]{$M_6$} circle(1.2pt);\fill[red] (-1.1,3.85)node[above left]{$M_1$} circle(1.2pt);\fill[red] (-3.1,1.15)node[above]{$M_2$} circle(1.2pt);\fill[red] (-2.8,-1.25)node[below left]{$M_3$} circle(1.2pt);\fill[red] (-1.75,-2.4)node[below left]{$M_4$} circle(1.2pt);\fill[red] (3.85,-1)node[right]{$M_5$} circle(1.2pt);
		\draw[red, dashed] (1.75,4)--(1.75,0)node[above left]{$u_6$};\draw[red, dashed]  (-1.1,3.85)--(-1.1,0)node[below]{$u_1$};\draw[red, dashed]  (-3.1,1.15)--(-3.1,0)node[below]{$u_2$};\draw[red, dashed]  (-2.8,-1.25)--(-2.8,0)node[above]{$z_3$};
		\draw[red, dashed] (-1.75,-2.4)--(-1.75,0) node[above]{$z_4$};\draw[red, dashed] (3.85,-1)--(3.85,0)node[above left]{$z_5$};
		\draw (2,-4.5)node[right]{$\Omega$};
		\draw (0,-5)node[below]{(b)};
		\end{scope} 
		\end{scope}
		\end{tikzpicture}	
	}
	
	\begin{figure}[h]
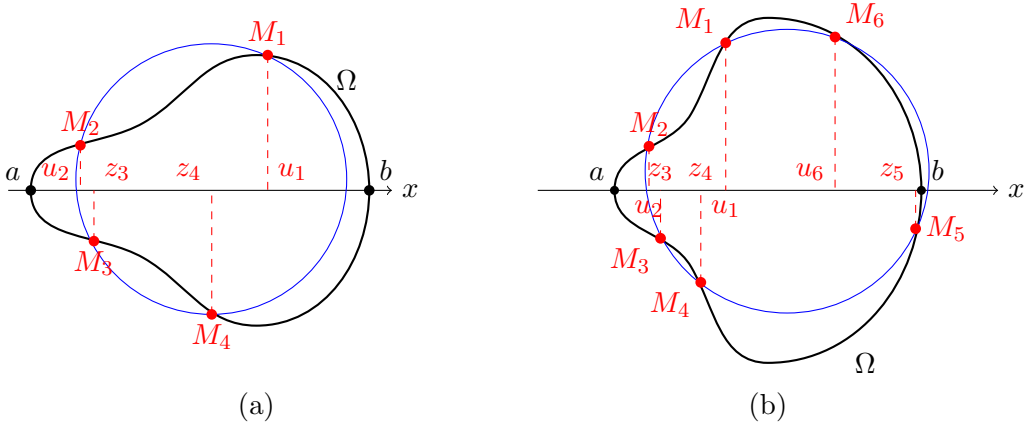

		\centering
		\Mi
		\caption{The intersection points of a minimizing set $\Omega$ and its optimal ball, in the proof of Proposition \ref{centre_optimizing_ball}.}\label{Mi-figure}
	\end{figure}	
	
	Assume by contradiction that the center of the optimal ball is $(0,t), t>0$. 
	We are going to prove that $\frac{\partial \psi}{\partial y}>0$ which leads to a contradiction.
	
	Define 
	$$
	g_1(x)=x^2+(f(x)-t)^2-1; \qquad g_2(x)=x^2+(f(x)+t)^2-1.
	$$
	Notice that the intersection points of the optimal ball with $\Omega$, in $\Pi^+$, satisfy  $g_1(u)=0$, while the intersection points of the optimal ball with $\Omega$ in $\Pi^-$, satisfy  $g_2(z)=0$.
	Observe that $g_2(x)\ge g_1(x)$ for every $x\in\R$ since $g_2(x)-g_1(x)=4f(x)t\ge 0$.  
	Hence there cannot exist zeros of $g_2$ in an interval where $g_1$ is non-negative.
	Moreover for $x\in(a,b)$, $g_2(x)>g_1(x)$, while for $x\le a$ or $x\ge b$, $g_1(x)=g_2(x)=x^2+t^2-1$ and hence $g_1, g_2\to +\infty$ if $x\to \pm \infty$. 
	Let  $u_{i}>u_{i+1}$ be two consecutive zeros of $g_1$ and assume that $g_1<0$ in $(u_{i+1},u_{i})$. 
	Since we assumed $b\ge\sqrt{1-t^2}$, necessarily $i$ is an even index.
	We now focus the analysis of $g_2$ in the interval $(u_{i+1},u_{i})$. 
	
	Three different situations may occur: (i) the function $g_2$ has no zero in the interval $(u_{i+1},u_{i})$; (ii) there exists exactly two zeros of $g_2$ belonging to $(u_{i+1},u_{i})$; (iii) there exist more than four zeros of $g_2$ belonging to $(u_{i+1},u_{i})$.
	We analyse each situation in order to study the sign of the function $x_1+...+x_{2p-1}-(x_2+...+x_{2p})$.
	
	In situation (i), the function $g_2$ is strictly positive, and $u_i-u_{i+1}>0$, that is, $x_i-x_{i+1}>0$.
	In situation (ii), let $z_{l}<z_{l+1}$ be the only two consecutive zeros of $g_2$ belonging to $(u_{i+1},u_{i})$, that is $g_2<0$ in $(z_{l},z_{l+1})$ and positive elsewhere. Hence the index $l$ is even as well as $i$.
	We can say that $u_{i}-u_{i+1}\geq z_{l+1}-z_{l}$ and hence, $x_i+z_l-(x_{i+1}+z_{l+1})>0$. 
	
	In situation (iii), $g_2$ changes its sign more than three times in $(u_{i+1},u_{i})$.
	Let $z_l<z_{l+1}<...<z_{l+q}$ be the zeros of $g_2$; again the indices $l, q$ are even. 
	We can say that $u_i-u_{i+1}>z_{l+1}-z_{l}+...+z_{l+q}-z_{l+q-1}$ which can be rewritten as
	$x_{i}+x_l+...+x_{l+q-1}-(x_{i+1}+ x_{l+1})+...+x_{l+q})>0$. 
	This argument implies that
	$\frac{\partial \psi}{\partial y}>0$, which leads to a contradiction. Hence the center of the optimal ball is at $(0,0)$.
	We have thus proved that all the optimal balls must be centered on $\Pi$ for such domains.
	
	\medskip
	For the general case, we proceed by approximation. 
	Let now $\Omega$ be an arbitrary domain: either if $\Omega$ is not transversal to an optimal ball or if its
	boundary contains vertical segments, one can find  
	a sequence of sets $\Omega_n$ transversal to their optimal ball or without vertical segments
	and converging to $\Omega$ in the $L^1$ norm.
	Let $B_{c_n}$ be a sequence of corresponding optimal balls for $\Omega_n$  of center $c_n$; necessarily $c_n$ belongs to $\Pi$
	according to the first part of the proof. Up to a subsequence $c_n$ converges to some $c\in\Pi$. 
	By definition of optimal ball, 
	\begin{equation}\label{defn_opt_ball}
		|B_{c_n}\Delta \Omega_n|\leq |B_{(x,y)}\Delta \Omega_n|,
	\end{equation} for every $(x,y)\in\R^2$. Now, 
	$$
	|B_{c_n}\Delta \Omega_n|=||{\chi_{B_{c_n}}-\chi_{\Omega_n}}||_{L^1(\R^2)}\to 
	||{\chi_{B_{c}}-\chi_{\Omega}}||_{L^1(\R^2)}.
	$$
	In the same way
	$$
	|B_{(x,y)}\Delta \Omega_n|=||{\chi_{B_{(x,y)}}-\chi_{\Omega_n}}||_{L^1(\R^2)}\to 
	||{\chi_{B_{(x,y)}}-\chi_{\Omega}}||_{L^1(\R^2)}.
	$$
	Therefore, passing to the limit in (\ref{defn_opt_ball}), we obtain that $B_c$ is an optimal ball for $\Omega$. 
\end{proof} 

\medskip
By applying Proposition \ref{centre_optimizing_ball} in two orthogonal directions, we easily deduce the
following corollary. It will be useful to characterize the optimal ball of the symmetrized sets $\Omega^*$
defined in section \ref{sec2}.
\begin{cor}\label{corcentre}
	Assume that  $\Omega\subset \R^2$ is transversal to its optimal balls, it has two perpendicular axes of symmetry $\Pi_1, \Pi_2$,  it is convex in both  perpendicular directions to $\Pi_1, \Pi_2$ and it has no segments on its boundary parallel to one of these directions.  
	Then there exists an optimal ball  centered  at the intersection of the two axes $\Pi_1, \Pi_2$.
	Moreover, if $\partial\Omega$ does not contain segments parallel to directions of symmetry, necessarily it has
	only one optimal ball which is  centered  at the intersection of the two axes.
\end{cor}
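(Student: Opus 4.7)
The strategy is simply to apply Proposition \ref{centre_optimizing_ball} successively in the two orthogonal directions $\Pi_1$ and $\Pi_2$; no new argument is really needed beyond verifying that the hypotheses of that proposition transfer correctly to each axis.

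First I would verify the hypotheses of Proposition \ref{centre_optimizing_ball} with $\Pi=\Pi_1$: by assumption $\Omega$ is symmetric with respect to $\Pi_1$, is convex in the direction perpendicular to $\Pi_1$, is transversal to its optimal balls, and its boundary contains no segment perpendicular to $\Pi_1$ (such a segment would be parallel to $\Pi_2$, which is excluded). The ``moreover'' clause of Proposition \ref{centre_optimizing_ball} therefore yields that every optimal ball of $\Omega$ has its center on $\Pi_1$. Exchanging the roles of $\Pi_1$ and $\Pi_2$, exactly the same verification (segments perpendicular to $\Pi_2$ are parallel to $\Pi_1$, hence excluded) shows that every optimal ball has its center on $\Pi_2$. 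Intersecting the two loci gives that any optimal ball must be centered at the single point $\Pi_1\cap\Pi_2$.

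For the existence statement, I would invoke the first part of Proposition \ref{centre_optimizing_ball}, which guarantees at least one optimal ball centered on $\Pi_1$; combined with the previous step it must coincide with the ball centered at $\Pi_1\cap\Pi_2$, so this ball is indeed optimal. For uniqueness under the additional hypothesis on segments parallel to the axes of symmetry, I would observe that two optimal balls with the same center and the same radius are identical, so the conclusion is immediate from the fact that all optimal balls share the center $\Pi_1\cap\Pi_2$ and have the common radius dictated by $|B|=|\Omega|$.

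There is essentially no obstacle in this corollary itself: all the geometric and analytic work has already been carried out in Proposition \ref{centre_optimizing_ball} (use of Lemma \ref{lemmapsi}, sign analysis of $\partial\psi/\partial y$, and the approximation argument for non-transversal domains or domains with perpendicular segments). The only small point to be careful about is the bookkeeping of which segment direction is forbidden when applying the proposition to each axis, which is handled by the simple remark that a segment perpendicular to $\Pi_i$ is parallel to $\Pi_j$ for $\{i,j\}=\{1,2\}$.
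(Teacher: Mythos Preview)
Your proposal is correct and follows exactly the approach sketched in the paper, which simply says the corollary is obtained ``by applying Proposition \ref{centre_optimizing_ball} in two orthogonal directions.'' Your careful bookkeeping of which segment directions must be excluded for each application, together with the observation that same center plus same radius forces uniqueness, fills in precisely the details the paper leaves to the reader.
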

%%%%%%%%%%%%%%%%%%%%
%%%%%%%%%%%%%%%%%%%%

We use now the previous corollary and Theorem \ref{casABCD} to compute the infimum of $\liminf_{\varepsilon\to 0}\mathcal{F}({\Omega_{\varepsilon}})
$.
\begin{thm}\label{minimizing_sequences_general}
Let $\varepsilon>0$.
Let $\Omega_{\varepsilon} $ be a sequence of planar regular sets converging to a ball $B$.
Then
$$
\inf
\left\{
\liminf_{\varepsilon\to 0}\mathcal{F}({\Omega_{\varepsilon}})
\right\} 
= 
\frac{\pi}{8(4-\pi)}.
$$
\end{thm}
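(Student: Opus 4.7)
The lower bound $\displaystyle\liminf_{\varepsilon\to 0}\mathcal{F}(\Omega_\varepsilon)\geq \frac{\pi}{8(4-\pi)}$ is already provided by Corollary \ref{corfond}, and holds for every sequence of sets converging to a ball. It therefore remains to exhibit one explicit sequence $\{\Omega_\varepsilon\}$ with $\Omega_\varepsilon\to B$ for which $\mathcal{F}(\Omega_\varepsilon)\to \frac{\pi}{8(4-\pi)}$.

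My plan is to read off the equality case from the lower-bound analysis of Theorem \ref{casABCD}. Recall that, in the case $(A_1,A_2)$, the value $\frac{\pi}{8(4-\pi)}$ in (\ref{A1A2}) is attained precisely when $\hat{\eta}_1=\hat{\eta}_2=\pi/4$, by convexity of $x\mapsto \frac{\cos x}{8(\sin x-x\cos x)}$. I therefore take, for each small $\varepsilon>0$, the symmetrized set $\Omega_\varepsilon^*$ of Definition \ref{rearrangement}, built around the unit disk $B$ with $\gamma^{IN}=\gamma^{OUT}=\pi$, so that the four points $A_1,A_2,A_3,A_4$ sit at the angles $3\pi/4$, $\pi/4$, $-\pi/4$, $-3\pi/4$ on $\partial B$, and with the four arcs $\tilde{\sigma}_{12},\tilde{\sigma}_{23},\tilde{\sigma}_{34},\tilde{\sigma}_{41}$ chosen so that each of the four regions $a_{ij}$ has area $\varepsilon$. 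By construction $|\Omega_\varepsilon^*|=\pi$, $|\Omega_\varepsilon^*\Delta B|=4\varepsilon$, the set has both coordinate axes as axes of symmetry, it is convex in the two perpendicular directions for $\varepsilon$ small, its boundary is a union of four circular arcs containing no straight segment, and it is transversal to $B$. Corollary \ref{corcentre} then forces the optimal ball to be centered at the origin, i.e.\ $B$ itself, so that $\lambda(\Omega_\varepsilon^*)=|\Omega_\varepsilon^*\Delta B|/\pi=4\varepsilon/\pi$ exactly.

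With this choice, $\eta_1^{\varepsilon}=\eta_2^{\varepsilon}=\pi/4$ are constant in $\varepsilon$, so we are exactly in case $(A_1,A_2)$ of Theorem \ref{casABCD}. The expansions (\ref{limA1}) and (\ref{limA2}) then give the exact cancellation of the singular terms $\pm\tfrac{1}{2\varepsilon}$, and identity (\ref{eq=}) yields
$$
\lim_{\varepsilon\to 0}\frac{\delta(\Omega_\varepsilon^*)}{\lambda^2(\Omega_\varepsilon^*)}
=\frac{\pi}{4}\cdot\frac{\cos(\pi/4)}{8\bigl(\sin(\pi/4)-(\pi/4)\cos(\pi/4)\bigr)}
=\frac{\pi}{8(4-\pi)},
$$
which, combined with Corollary \ref{corfond}, gives the claimed equality. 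The only verifications needed are that $\Omega_\varepsilon^*$ is well defined for $\varepsilon$ small (clear from the small-area construction of the four caps $a_{ij}$) and that the optimal ball for $\Omega_\varepsilon^*$ is indeed $B$; this last point is the one place where real work is hidden, but it is precisely what Corollary \ref{corcentre} delivers thanks to the double axial symmetry and the absence of straight segments on $\partial\Omega_\varepsilon^*$.
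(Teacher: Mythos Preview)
Your proof is correct and follows essentially the same approach as the paper's: both invoke Corollary \ref{corfond} for the lower bound, then realize the value $\frac{\pi}{8(4-\pi)}$ by exhibiting a sequence of symmetrized sets $\Omega_\varepsilon^*$, using Corollary \ref{corcentre} to pin down the optimal ball at the center and the $(A_1,A_2)$ expansions (\ref{limA1})--(\ref{limA2}) from Theorem \ref{casABCD} to compute the limit. The only difference is cosmetic: you fix $\eta_1=\eta_2=\pi/4$ from the outset, while the paper leaves $\hat\eta_1,\hat\eta_2$ generic and then observes that the minimum of the right-hand side of (\ref{eq=}) is attained at $(\pi/4,\pi/4)$.
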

\begin{proof}
According to Corollary \ref{corfond}, it suffices to exhibit a sequence of domains for which
we have equality in (\ref{mino}). We choose a particular sequence $\Omega_\varepsilon^*$.
If $\Omega_\varepsilon^*$ is convex,  Corollary \ref{corcentre}
guarantees that the center of its optimal ball is the center of symmetry of $\Omega^*$.
Let $\Omega_\varepsilon$
be a sequence of transversal sets to a ball $B$,  converging to $B$ and such that $\Omega_\varepsilon^*$ is convex. This corresponds to the cases $A_1$ and $A_2$ 
 of  the proof of Theorem \ref{casABCD}
for which we have, see (\ref{eq=})
$$
\liminf_{\varepsilon\to 0}\frac{\delta(\Omega_\varepsilon^*)}{\lambda^2(\Omega_\varepsilon^*)}
= \frac{\pi}{8}
\left[\frac{\cos(\hat{\eta}_1)}{8(\sin(\hat{\eta}_1)-\hat{\eta}_1\cos(\hat{\eta}_1))}+
\frac{\cos(\hat{\eta}_2)}{8(\sin(\hat{\eta}_2)-\hat{\eta}_2\cos(\hat{\eta}_2))}\right]\,.
$$
Now the minimal value of the right-hand side is achieved for $\hat{\eta}_1=\hat{\eta}_2=\pi/4$
for which we get
$\frac{\pi}{8(4-\pi)}$,
as seen in (\ref{A1A2}).
\end{proof}

\begin{rem}
Notice that the same result has been proved in \cite{CiLe2} by using the so called ovals sets.
\end{rem}
%%%%%%%%%%%%%%%%%%%%%%%%%%%%%%%%%
%%%%%%%%%%%%%%%%%%%%%%%%%%%%%%%%%%%

%%%%%%%%%%%%%%%%%%%%%%%%%%%%%%%%%
%%%%%%%%%%%%%%%%%%%%%%%%%%%%%%%%%%%

%%%%%%%%%%%%%%%%%%%%%%%%%%%%%%%%%
%%%%%%%%%%%%%%%%%%%%%%%%%%%%%%%%%%%

\section{Existence theorem} \label{sec3}
In this section we are going to prove the existence of a minimizer $\Omega$ for the functional $\mathcal{F}$.
We have proved that  sequences of sets converging to a ball cannot be minimizing.
For the other sequences, 
we are going to prove that they are  contained in a fixed bounded domain $R$. This will allow us to get the existence of a limit set $\Omega$. 
More precisely in Proposition \ref{perimeter} below we prove a uniform bound on the number of connected components of a minimizing sequence. 
Notice that a similar uniform boundedness was proved in
\cite[Lemma 5.1]{FMP} in a different way.

We start with a natural result in this context.
\begin{lemma}\label{comp_conn_loin}
Let $\Omega=\cup_i \omega_i\subset \R^2$, where $\omega_i$ are connected components. Let $\widetilde{\Omega}$  be any set composed by the same $\omega_i$ translated in such a way that the Euclidean distance between them is more than one. Then $\lambda(\Omega)\leq \lambda(\widetilde{\Omega})$.
\end{lemma}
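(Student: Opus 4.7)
The plan is to recast the Fraenkel asymmetry as an overlap problem. By scale invariance of $\lambda$, I may assume $|\Omega|=|\widetilde\Omega|=\pi$, so every competing ball has radius $1$; the identity $|A\,\Delta\,B|=|A|+|B|-2|A\cap B|$ then gives $\lambda(\Omega)=2-\tfrac{2}{\pi}\max_{|B|=\pi}|\Omega\cap B|$ (and similarly for $\widetilde\Omega$), so that the inequality $\lambda(\Omega)\le\lambda(\widetilde\Omega)$ is equivalent to
\[
\max_{|B|=\pi}|\Omega\cap B|\ \ge\ \max_{|B|=\pi}|\widetilde\Omega\cap B|.
\]

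Let $\widetilde B$ be an optimal ball realising $\lambda(\widetilde\Omega)$. The key observation I would use is that, since the translated components $\tilde\omega_i$ are pairwise separated by more than one unit (i.e.\ more than the reach of the competing radius-$1$ ball), $\widetilde B$ meets at most one component, say $\tilde\omega_{i_0}$, so that $|\widetilde\Omega\cap\widetilde B|=|\tilde\omega_{i_0}\cap\widetilde B|$. I would then build a candidate $B$ for $\Omega$ by translating $\widetilde B$ by the vector $v$ that sends $\tilde\omega_{i_0}$ back to $\omega_{i_0}$. By rigid invariance of the Lebesgue measure, $|\omega_{i_0}\cap B|=|\tilde\omega_{i_0}\cap\widetilde B|$, and the extra components $\omega_j$, $j\ne i_0$, can only contribute positively to the intersection, so
\[
|\Omega\cap B|\ \ge\ |\omega_{i_0}\cap B|\ =\ |\tilde\omega_{i_0}\cap\widetilde B|\ =\ |\widetilde\Omega\cap\widetilde B|.
\]
Reading the two asymmetries back through the overlap-asymmetry identity then gives
\[
\lambda(\Omega)\ \le\ \frac{|\Omega\,\Delta\,B|}{\pi}\ \le\ \frac{|\widetilde\Omega\,\Delta\,\widetilde B|}{\pi}\ =\ \lambda(\widetilde\Omega).
\]

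The main point that deserves care is the claim that the optimal ball $\widetilde B$ can meet at most one component. A radius-$1$ ball has diameter $2$, so the literal statement ``pairwise distance $>1$'' does not rule out a single ball crossing two components; the hypothesis should therefore be interpreted as ``distance large enough with respect to the diameter of the optimal ball'', a condition that is harmless for the intended application (in the proof of Proposition~\ref{perimeter} the components can be pushed arbitrarily far apart). Once that separation is granted, the translation construction above concludes the proof, and in particular the argument is completely insensitive to the individual shapes of the pieces $\omega_i$.
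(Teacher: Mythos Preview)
Your argument is correct and is essentially the same as the paper's: both proofs rewrite $\lambda$ in terms of the maximal overlap $|\Omega\cap B|$, use the separation hypothesis to conclude that an optimal ball for $\widetilde{\Omega}$ meets only one component, and then observe that the same ball (after an obvious translation) does at least as well on $\Omega$. The only cosmetic difference is that the paper fixes one component, translates the others, and argues by contradiction, whereas you translate the ball back and argue directly; the content is identical.

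Your final remark about the separation hypothesis is well taken and worth recording: a unit ball has diameter $2$, so ``pairwise distance $>1$'' does not literally force the optimal ball to meet a single component; the paper's own proof (``translating far away'') tacitly uses the stronger separation you describe. As you note, this is immaterial for the application in Proposition~\ref{perimeter}, where one is free to push the components as far apart as one likes.
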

\begin{proof}
We will prove the statement for a set $\Omega$ composed by two connected components $E_1$ and $E_2$; the general case is similar.

Assume $|\Omega|=\pi$ and let $B$ be an optimal ball for $\Omega$, that is, $\la(E_1\cup E_2)=2(\pi-|E_1\cap B|-|E_2\cap B|)/\pi$. 
We denote by $\widetilde{\Omega}$ the set obtained by translating far away the component $E_2$. 
Up to rename $E_1, E_2$, we can assume that  $\la(\widetilde{\Omega})=2(\pi-|B_x\cap E_1|)/\pi$, for some ball $B_x\neq B$ of radius one and center at $x$. 
Assume by contradiction that $\la(\Omega)>\la(\widetilde{\Omega})$, that is, $|E_1\cap B|+|E_2\cap B|<|E_1\cap B_x|$. Hence $|E_1\cap B|+|E_2\cap B|<|E_1\cap B_x|+|E_2\cap B_x|$, which contradicts the fact that $B$ is an optimal ball for the Fraenkel asymmetry of $\Omega$.
\end{proof}

\begin{lemma}\label{lemmaperimeter}
Let $\omega\subset \R^2$ be a connected set which is not contained in a ball of radius $1$. Then its perimeter is greater than 4.
\end{lemma}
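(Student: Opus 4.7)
My plan is to reduce to the convex case by passing to the convex hull $K := \mathrm{conv}(\omega)$, and to establish the bound $P(K) \ge 4\,R(K)$ for planar convex bodies, where $R(K)$ denotes the circumradius. Connectedness of $\omega$ enters through the classical perimeter inequality $P(\mathrm{conv}(\omega)) \le P(\omega)$ for connected planar sets. Since $\omega \subset K$, $K$ also fails to be contained in any closed ball of radius $1$, so $R(K) > 1$. Combining these then gives $P(\omega) \ge P(K) \ge 4\,R(K) > 4$, which is the conclusion.

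It remains to prove $P(K) \ge 4\,R(K)$ for a planar convex body $K$ with nonempty interior. Let $B(O,R)$ denote the smallest closed disk containing $K$, so $R = R(K)$. A classical characterization of the smallest enclosing disk asserts that the contact set $C := K \cap \partial B$ cannot lie in any open semicircle of $\partial B$, which leaves two alternatives: either \emph{(a)} $C$ contains two antipodal points $P_1, P_2$ of $B$, or \emph{(b)} $C$ contains three points $P_1, P_2, P_3$ whose convex hull $T$ is a (non-degenerate) triangle with $O$ in its interior.

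In case (a), $\mathrm{diam}(K) \ge |P_1 P_2| = 2R$, and a standard projection argument (project $\partial K$ onto the line through $P_1, P_2$ with multiplicity at least $2$) yields $P(K) \ge 2\,\mathrm{diam}(K) \ge 4R$. In case (b), convexity of $K$ gives $T \subset K$, so by monotonicity of perimeter on nested convex sets, $P(K) \ge P(T)$; the law of sines gives $P(T) = 2R\sum_{i=1}^3 \sin \alpha_i$, where $\alpha_1,\alpha_2,\alpha_3 \in (0,\pi/2)$ are the angles of $T$ (strictness because $O$ is interior to $T$). An elementary computation shows that the minimum of $\sum \sin\alpha_i$ on the compact set $\{\alpha_i \in [0,\pi/2],\ \alpha_1+\alpha_2+\alpha_3 = \pi\}$ is $2$, attained only at the degenerate corner configurations where two angles equal $\pi/2$ and the third equals $0$; hence $\sum \sin\alpha_i \ge 2$ and $P(T) \ge 4R$. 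Combining the two cases with the reduction above yields the lemma.

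The main obstacle is the convex-body inequality $P(K) \ge 4\,R(K)$, specifically case (b): identifying the smallest enclosing disk by the no-semicircle property of its contact set and carrying out the trigonometric minimisation on the closed simplex. Both ingredients are elementary but require the boundary analysis to locate the extremal (degenerate) configurations. The reduction $P(\mathrm{conv}(\omega)) \le P(\omega)$ for connected planar sets is the other piece, which I would quote as a classical fact.
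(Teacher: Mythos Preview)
Your argument is correct and follows the same route as the paper: pass to the convex hull, use $P(\mathrm{conv}(\omega))\le P(\omega)$ for connected planar sets, and invoke the inequality $P(K)\ge 4R(K)$ for planar convex bodies. The only difference is that the paper simply asserts this last inequality, whereas you supply a full proof via the contact-set dichotomy for the smallest enclosing disk and the trigonometric minimisation; that extra work is sound (the feasible simplex in the angles is the triangle with vertices the three permutations of $(0,\tfrac{\pi}{2},\tfrac{\pi}{2})$, and concavity of $\sin$ forces the minimum of $\sum\sin\alpha_i$ to the vertices, all giving value $2$).
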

\begin{proof}
The convex hull of $\omega$, denoted by $co(\omega)$, is connected and is not contained in a ball of radius $1$. Hence the circumradius of $co(\omega)$ is greater than 1.
Therefore the perimeter of $co(\omega)$ is greater or equal to 4. 
Since $\omega\subset \R^2$ is connected,  its perimeter is greater or equal the perimeter of its convex hull. Thus, the perimeter of $\omega$ is greater than 4.
\end{proof}

%%%%%%%%%%%%%%%%%%%%%%%%
\begin{prop}\label{perimeter}
Let $\Omega$ be a planar set whose perimeter is less than 20. Then there exists a planar set  $\widetilde{\Omega}$ composed by at most 7 connected components, such that 
$$
\mathcal{F}(\widetilde{\Omega})\le \mathcal{F}(\Omega).
$$
%$$
%\frac{\delta(\widetilde{\Omega})}{\lambda^2(\widetilde{\Omega})}\leq \frac{\delta({\Omega})}{\lambda^2({\Omega})}.
%$$
\end{prop}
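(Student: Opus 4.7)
My plan is to modify $\Omega$ in a finite number of steps into a set $\widetilde{\Omega}$ with at most seven connected components, each step preserving or decreasing $\mathcal{F}$. Normalising $|\Omega|=\pi$ (so that the balls realising the Fraenkel asymmetry are unit disks), I write $\Omega=\bigcup_i \omega_i$ as the disjoint union of its connected components. The first step is to apply Lemma \ref{comp_conn_loin}: by translating the $\omega_i$ so that their pairwise Euclidean distances exceed $2$, the perimeter and area (and hence $\delta$) are unchanged while $\lambda$ does not decrease, so $\mathcal{F}$ does not increase. After this translation, any unit disk meets at most one component, so writing $\omega_0$ for the component near which the optimal ball is centred, one has $\lambda(\Omega)=\bigl(\mu_0+\sum_{i\ge 1}|\omega_i|\bigr)/\pi$, where $\mu_0$ is the minimum of $|\omega_0\triangle B|$ over unit disks $B$.

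Next I would split the components into \emph{spread} ones (not contained in any disk of radius one) and \emph{localised} ones (contained in some disk of radius one). By Lemma \ref{lemmaperimeter} every spread component has perimeter at least $4$, so the assumption $P(\Omega)<20$ forces the number of spread components to be at most four. It remains to reduce the number of localised components to at most three.

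For the localised components, I would first replace each one by a round disk of the same area, placed far from every other component. By the isoperimetric inequality this does not increase the perimeter (so $\delta$ does not increase), and since all these disks lie far from the unit ball realising the Fraenkel asymmetry, only their total area matters for $|\Omega\triangle B|$, so $\lambda$ is unchanged. Then, whenever there remain at least four localised disks, I would fuse the two smallest into a single disk of equal combined area placed far from everything else: the subadditivity $\sqrt{a_i}+\sqrt{a_j}\ge\sqrt{a_i+a_j}$ guarantees that the perimeter strictly decreases, while the total "far-away" area is preserved so $\lambda$ again does not drop, provided the fused disk does not itself become a better location for the optimal ball. Iterating, one ends with at most three localised components and thus at most $4+3=7$ in total.

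The main obstacle is guaranteeing, in the fusion step, that the optimal ball of the Fraenkel asymmetry does not migrate to the newly merged disk, since such a migration would strictly decrease $\lambda$ and could raise $\mathcal{F}$. Concretely, with $\omega_0$ the original main component and $a_{N-1}$, $a_{N}$ the two smallest localised areas, one has to verify the inequality $\mu_0+\sum_{i\ge 1}|\omega_i|\le 2\bigl(\pi-a_{N-1}-a_{N}\bigr)$, which says that a unit ball centred near $\omega_0$ is still a better competitor than one containing the fused disk. Because there are at least four localised components whose areas sum to at most $\pi-|\omega_0|$, the two smallest ones have combined area small enough that this inequality holds, and the argument closes; the remaining bookkeeping is then routine.
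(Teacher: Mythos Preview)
Your outline tracks the paper's proof through the first three moves: translating the components apart via Lemma~\ref{comp_conn_loin}, bounding the number of ``spread'' components by four via Lemma~\ref{lemmaperimeter}, and rounding each localised component to a disk of equal area. The divergence, and the gap, is in the final reduction step.

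Your greedy fusion of the two smallest localised disks can strictly \emph{increase} $\mathcal{F}$. Take $\Omega$ to be ten disjoint disks each of area $\pi/10$ (no spread components). Then $\delta(\Omega)=\sqrt{10}-1$, $\lambda(\Omega)=2(1-\tfrac{1}{10})=1.8$, and $\mathcal{F}(\Omega)\approx 0.667$. Fusing two of them yields one disk of area $\pi/5$ together with eight disks of area $\pi/10$; the optimal ball migrates to the large disk, so $\lambda$ drops to $1.6$, while $\delta=8/\sqrt{10}+\sqrt{1/5}-1\approx 1.977$, giving $\mathcal{F}\approx 0.772>0.667$. Your displayed inequality $\mu_0+\sum_{i\ge 1}|\omega_i|\le 2(\pi-a_{N-1}-a_N)$ reads here $1.8\pi\le 1.6\pi$, which is false; the heuristic that ``the two smallest have combined area small enough'' breaks down whenever $\omega_0$ is itself a localised disk comparable in size to its neighbours. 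So the step-by-step monotonicity you rely on does not hold, and nothing in the sketch repairs it.

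The paper's remedy is to avoid greedy fusion altogether. After rounding, it sets aside the \emph{largest} ball $B_{r_0}$ and treats it, together with the four spread components, as a frozen block; the Fraenkel asymmetry $\widehat{\lambda}$ is then determined by this block alone. The remaining radii $r_1,r_2,\ldots$ are varied subject to $\sum_{i\ge 1}r_i^2=\mathrm{const}$ and the cap $r_i\le a$ (with $a$ chosen so that $2(1-a^2)=\widehat{\lambda}$), which is precisely the condition preventing the optimal ball from migrating to any $B_{r_i}$. One then minimises the perimeter contribution $\sum_{i\ge 1}r_i$ \emph{globally} over this compact set; its extreme points have at most one coordinate in $(0,a)$, and a short computation shows the minimum among those occurs with a single coordinate equal to $a$, leaving at most two surviving balls. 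This yields $4+1+2=7$ components. The missing idea in your proposal is exactly this: freezing the largest ball and imposing the cap $r_i\le a$ so that $\lambda$ is held constant throughout the optimisation, rather than hoping it survives a sequence of local moves.
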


\begin{proof}[Proof of Proposition \ref{perimeter}]
Assume that $\Omega$ is the union of $m$ (possible infinite) connected components $\omega_i$.  
By Lemma \ref{comp_conn_loin}, we can assume that only one connected component $\omega_k$ has a non-empty intersection with an optimal ball $B$, since one can translate far away each connected component and this procedure decreases the value of the functional $\mathcal{F}$ (since $\lambda$ increases and $\delta$ keeps equal). 

By Lemma \ref{lemmaperimeter}, there exist at most 4 connected components $\omega_1, \omega_2, \omega_3, \omega_4$ which are not contained in a ball $B_1$ of radius 1. 
The first
step consists in replacing all the other components $\omega_i, i\geq 5$ by a ball: this decreases the perimeter without changing the Fraenkel asymmetry. We  relabel all these balls by choosing a decreasing order with respect to the corresponding radii $r_0 \geq r_1\geq r_2...$.

The optimal ball is  either on one of the four first components $\omega_j, j=1,\ldots 4$ or on the
ball with the largest radius $\omega_5=B_{r_0}$.  
So we deal  with a domain $\widehat{\Omega}$ defined as
$$
\widehat{\Omega}=\bigcup_{j=1}^{4}\omega_j\cup \bigcup_{i\ge 0}B_{r_i},
$$ 
for which $\mathcal{F}(\widehat{\Omega})\leq \mathcal{F}(\Omega)$.
 Moreover $\lambda(\widehat{\Omega}=\widehat{\lambda}(\widehat{\Omega})$, where
$$
\widehat{\lambda}(\widehat{\Omega})=
\min\left\{ 2(1-r_0^2);\ \frac{|\omega_j\Delta B_1|+\pi-|\omega_j|}{\pi},\ j=1,...,4
\right\}\,.
$$  
Notice that, since $\widehat{\lambda}(\widehat{\Omega})\leq 2(1-r_i^2)$, it holds 
$r_i\leq a:=\sqrt{1-\widehat{\lambda}(\widehat{\Omega})}$, for every $i\geq 1$.

Let  $\pi\alpha$ be the area of $\omega_1\cup \omega_2\cup \omega_3\cup \omega_4\cup B_{r_0}$ and $2\pi P_0$ its perimeter.
Let  $K$ be the following compact set:
$$
K:=\left\{\underline{r}=(r_1, r_2,...)\ :\ \sum_{i\geq 1} r_i^2=1-\alpha,\ 0\leq r_i\leq a\right\}.
$$
We construct the set $\widetilde{\Omega}$ by replacing the balls $B_{r_1}, B_{r_2},...$ of $\Omega_0$ by two balls, at most,  in such a way that  $\frac{\delta}{\lambda^2}$ decreases. To do that, we minimize over $K$ the quantity 
$$
j(\underline{r})=\frac{P_0-1+\Sigma_{i\geq 1} r_i}{\widehat{\lambda}^2(\Omega_0)}.
$$
The minimizer has at most one element $r_k$ verifying $0<r_k<a.$ Indeed, if there exists $r_i\neq r_j$ such that $0<r_i<a,  0< r_j<a$, then we can replace $r_i$ by $r_i+h_1$ and $r_j$ by $r_j-h_2$ for some $h_1<h_2$, in such a way that the strict inequalities are still satisfied. We still have an element of $K$, because 
$$
1-\alpha=\sum_{k\neq i, k\neq j} r_k^2+(r_i+h_1)^2+(r_j-h_2)^2\,,
$$
if and only if $(h_1, h_2)$ belongs to the cercle centered in $(-r_i,r_j)$ of radius $\sqrt{r_i^2+r_j^2}$.
It is easy to see that such $(h_1, h_2)$ exist: this implies that
$r_i+h_1$ and $r_j-h_2$  give a smaller value of the minimum of
%$\frac{P_0-1+\Sigma r_i}{\hat{\lambda}^2(\Omega_0)}$.
the quantity $j(\underline{r})$. This is absurd.
Hence the minimum of $j(r)$ over $K$ is attained for $\underline{r}=(a, a, a, ..., a, b, 0, 0, ...)$ where $a$ is repeated $m$ times and $b<a$. 
The minimizer satisfies $|\underline{r}|^2=m a^2+b^2=1-\alpha$, that is, $m\leq \frac{1-\alpha}{a^2}$. 
Notice that the quantity $j(\underline{r})$ is minimal for the values of $a,m$ which minimize $a(m+\sqrt{\frac{1-\alpha}{a^2}-m})$. 
It is not difficult to prove that the minimum is realized by $m=1$. 
Therefore, the set $\widetilde{\Omega}=\omega_1\cup \omega_2\cup \omega_3\cup \omega_4\cup B_{r_0}\cup B_{r_1}\cup B_{r_2}$ satisfies $\mathcal{F}(\widetilde{\Omega})\le\mathcal{F}(\Omega)$.
\end{proof}

We are now able to prove Theorem \ref{thm_existence_minimizer}.
\begin{proof}[Proof of Theorem \ref{thm_existence_minimizer}]
Let $\{\Omega_n\}$ be a minimizing sequence for $\mathcal{F}$ and assume $|\Omega_n|=\pi$. 
By Corollary \ref{corfond} and Theorem \ref{thmAFN} the sequence $\{\Omega_n\}$ does not converge to a ball.
Since $\delta(\Omega_n)\leq 0.41 \lambda^2(\Omega_n)$ and  $\lambda(E)\leq 2$, for any planar set $E$,it holds $\delta(\Omega_n)\leq 1.64$. 
This implies that $P(\Omega_n)\leq 2\pi+2\pi\;1.64 \leq 16.6$. 
 
By  Proposition \ref{perimeter} we can replace $\Omega_n$ by another minimizing sequence (still denoted $\Omega_n$)
with at most 7 connected components and each component has a diameter less or equal to 8.4 (because the total perimeter
is less than 16.6). Therefore, it is possible to enclose all the connected components, with mutual distance between 1 and 2
in a large, but fixed, rectangle $R$. 

By recalling that $P(\Omega_n)=|D\chi_{\Omega_n}(R)|$, the sequence $\chi_{\Omega_n}$ is bounded in $BV(R)$.
By the compact embedding $BV(R)\hookrightarrow L^1(R)$,  
there exists $\Omega\subset R$ such that, up to a subsequence,
$\chi_{\Omega_n}\to \chi_{\Omega}$ weakly-$*$ in $BV(R)$. 
Thus $\chi_{\Omega_n}\to \chi_{\Omega}$ in $L^1(R)$ and $|\Omega|=\pi$. 
Notice that $\lambda(\Omega)>0$ since  $\Omega$ is not a ball.
Moreover
$\liminf\limits_{n\to \infty} P(\Omega_n)\geq P(\Omega)$
by lower semicontinuity and thus $\liminf\limits_{n\to \infty} \delta(\Omega_n)\geq \delta(\Omega)$.

We finally prove that
$\lambda(\Omega_n)\to \lambda(\Omega)$.
Let $B_x, B_{x_n}$ be optimal balls, with respect to the  Fraenkel asymmetry, of  $\Omega, \Omega_n$, respectively and let $x,x_n$ be their centers. 
Therefore $|\Omega_n|=|B_{x_n}|=|\Omega|=|B_x|=\pi$ and $P(B_x)=P(B_{x_n})=2\pi$.
It holds
\begin{eqnarray*}
\lambda(\Omega_n)&\leq& \frac{|\Omega_n \Delta B_x|}{|B_x|}=\frac{||\chi_{\Omega_n}-\chi_{B_x}||_{L^1(R)}}{\pi}\\
&\le& 
\frac{||\chi_{\Omega_n}-\chi_{\Omega}||_{L^1(R)}}{\pi}+\frac{||\chi_{\Omega}-\chi_{B_x}||_{L^1(R)}}{\pi}=\ep_n+\la(\Omega).
\end{eqnarray*}
On the other hand, 
$$
\lambda(\Omega)\leq \frac{|\Omega \Delta B_{x_n}|}{|B_{x_n}|}\leq 
\frac{||\chi_{\Omega}-\chi_{\Omega_n}||_{L^1(R)}}{\pi}+\frac{||\chi_{\Omega_n}-\chi_{B_{x_n}}||_{L^1(R)}}{\pi}=\ep_n+\lambda(\Omega_n),
$$ 
and hence we have
$$
\lim_{n\to+\infty}\lambda(\Omega_n)= \lambda(\Omega)\,.
$$ 
Thus $\Omega$ provides a solution of the minimization problem.
\end{proof}

 %%%%%%%%%%%%%%%%%%%%%%%%% 

\section{Properties of the optimal set}\label{sec4}
In this section we gather some analytic and geometric properties of an optimal set for $\mathcal{F}$. 
\begin{prop}\label{thmqualit}
	Let $\Omega_0$  be a minimizer of the functional $\mathcal{F}$. Then, 
	\begin{enumerate}
		\item $\partial \Omega_0$ is of class $C^{1,1}$;
		\item the boundary of $\Omega_0$ is composed of arcs of circle. More precisely, 
		in any connected component  of the set $\R^2\setminus \cup_{x\in {Z}(\Omega_0)} (x+\partial B)$
		(where $Z(\Omega_0)$ is the set of the centers of the optimal balls for $\Omega_0$),
		$\partial \Omega_0$ is an union of arcs of circle with the same radius;
		\item $P(\Omega_0)\le 16.16$;
		\item $\Omega_0$ is not convex;
				\item $\Omega_0$ is composed by  at most 6 connected components.
	\item\label{4}  $\Omega_0$ has at least two optimal balls realizing the Fraenkel asymmetry;
	\end{enumerate}
\end{prop}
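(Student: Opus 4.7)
The plan is to argue by contradiction and assume that $\Omega_0$ has a unique optimal ball $B$, which I place at the origin. Under this assumption, I first show that $\lambda$ is smoothly differentiable at $\Omega_0$ along arbitrary $C^1$ deformations. Indeed, since $B$ is the unique minimizer of the continuous map $\psi(x)=|\Omega_0\Delta B_x|$ at $x=0$, and $\psi$ is differentiable there with vanishing gradient by Lemma~\ref{lemmapsi} and Proposition~\ref{cdt_optimalite}, an envelope/Danskin argument yields, for any $C^1$ normal deformation $\Omega_t$ of $\Omega_0$ with velocity $v=V\cdot\nu$ on $\partial\Omega_0$,
\[
\pi\,\lambda'(0)=\int_{\partial\Omega_0}(\chi_{B^c}-\chi_B)\,v\,d\mathcal H^1;
\]
the key point is that the moving optimal-ball center contributes $0$ at first order. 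A minor technical refinement handles the possibly non-transversal case in which $\partial\Omega_0$ contains arcs of $\partial B$, by approximation (as in the end of the proof of Proposition~\ref{centre_optimizing_ball}).

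Combining this with the standard shape derivative $\delta'(0)=\tfrac{1}{2\pi}\int_{\partial\Omega_0}H\,v\,d\mathcal H^1$ and enforcing the area constraint $\int v\,d\mathcal H^1=0$ with a Lagrange multiplier $\mu$, the minimality of $\Omega_0$ yields the Euler--Lagrange equation
\[
\frac{\lambda\,H(x)}{2\pi}-\frac{2\delta}{\pi}\bigl(\chi_{B^c}(x)-\chi_B(x)\bigr)\equiv\mu\qquad\mathcal H^1\text{-a.e.\ on }\partial\Omega_0.
\]
Hence $\partial\Omega_0$ admits only two possible values of curvature, $H_{\mathrm{out}}$ on $\partial\Omega_0\setminus\overline B$ and $H_{\mathrm{in}}$ on $\partial\Omega_0\cap B$, with $H_{\mathrm{out}}-H_{\mathrm{in}}=8\delta/\lambda>0$. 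Combined with the $C^{1,1}$ regularity (item~(1)), the arc-of-circle structure (item~(2)), the perimeter and connected-component bounds (items~(3) and~(5)), and the intersection constraint $\sum(-1)^ix_i=\sum(-1)^iy_i=0$ from Proposition~\ref{cdt_optimalite}, this reduces the admissible configurations of $\Omega_0$ to finitely many combinatorial types, indexed by the number $2p$ of intersection points of $\partial\Omega_0$ and $\partial B$ and by the distribution of the ``in'' and ``out'' arcs along $\partial B$.

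The heart of the proof is then an explicit case-by-case elimination: for each such type I exhibit an admissible (area- and transversality-preserving) perturbation $V$ satisfying
\[
\lambda\,\delta'(0)-2\delta\,\lambda'(0)<0,
\]
contradicting the first-order optimality of $\Omega_0$. Two families of perturbations will cover all cases: first, local sliding of an ``out'' arc along $\partial B$, compensated by an opposite sliding on a neighbouring ``in'' arc, chosen so that $\lambda'(0)=0$ while $\delta'(0)<0$ (which is possible precisely because $H_{\mathrm{out}}\ne H_{\mathrm{in}}$); second, transfer of area across $\partial B$ between two arcs on opposite sides of the set, again exploiting the jump $H_{\mathrm{out}}-H_{\mathrm{in}}>0$ to decrease the perimeter at first order without moving the optimal ball. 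The main difficulty is not conceptual but combinatorial: the number of compatible configurations is substantial, and each must be treated individually with an explicit choice of perturbation, which is why the paper describes the proof as ``lengthy, but rather simple.''
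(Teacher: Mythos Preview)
Your derivation of the first-order Euler--Lagrange equation is correct and matches the paper's condition~(ii) in Proposition~\ref{optimalconditions_connected}. But the subsequent plan --- eliminate every configuration by exhibiting a first-order decreasing perturbation --- cannot work, and the gap is not combinatorial but conceptual. Since $\Omega_0$ is a minimizer, it is in particular a critical point, so \emph{every} admissible perturbation has $\lambda\,\delta'(0)-2\delta\,\lambda'(0)=0$. Your own Euler--Lagrange equation makes this explicit: if $\int v\,d\mathcal H^1=0$ and $\lambda'(0)=\tfrac{1}{\pi}\int(\chi_{B^c}-\chi_B)v=0$, then
\[
\int_{\partial\Omega_0} H\,v
=\int_{\partial\Omega_0}\Bigl[\tfrac{2\pi\mu}{\lambda}+\tfrac{4\delta}{\lambda}(\chi_{B^c}-\chi_B)\Bigr]v=0,
\]
so $\delta'(0)=0$ automatically. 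Neither of your two families of perturbations can therefore produce a strict first-order decrease; the jump $H_{\mathrm{out}}-H_{\mathrm{in}}>0$ is already \emph{encoded} in the first-order condition and cannot be exploited again at first order.

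The paper's proof needs two further ingredients that you omit. First, from the two-curvature structure and $C^{1}$ regularity it deduces that each connected component of $\Omega_0$ has rotational symmetry of some order $N\ge 2$; this collapses the problem to an explicit two-parameter family $(\alpha,\theta)$ for each $N$ (plus a separate discussion of the non-connected case, reduced to $E_1\cup\text{ball}$). Second, the elimination uses not only first-order information but also (a) genuine \emph{second-order} optimality conditions --- the quantities $Q$ and $\Phi(\alpha)$ of Propositions~\ref{optimalconditions_connected} and~\ref{optimalconditions_NONconnected} --- and (b) the quantitative comparison $\mathcal F(\Omega_0)<0.406$ against the optimal stadium of Theorem~\ref{thmAFN}. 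The ``lengthy but simple'' case analysis then shows, for each range of $(\alpha,\theta,N)$, that at least one of these four conditions (area balance, first-order, second-order, or the value bound) is violated. Without the second-order tests and the $0.406$ benchmark, there is no way to rule out the genuine critical configurations that do satisfy the first-order law.
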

The first two statements have been proved in \cite{Ci-Le}.  
Statement (3) 
follows from the fact that $\lambda(\Omega)\le 2$ for every $\Omega$ and $\mathcal{F}(\Omega_0)\le 0.41$ by Theorem \ref{thmAFN}, so that $P(\Omega_0)\le 2\pi(4\cdot 0.41-1)=16.16$.
 Statement (4) 
 follows from the existence
of a non convex set $\textsf{M}_0$, 
 shown in Section \ref{section-conjecture}, for which $\mathcal{F}({\textsf{M}}_0)\approx 0.39$ (see Conjecture \ref{conj2}).

We sketch the proof of statement (5). 
Assume that the optimal domain
has several connected components: $\Omega_0=\cup_{i=1}^m \omega_i$ ($m$ possibly infinite) and assume $|\Omega_0|= \pi$.
Necessarily, if a component $\omega$ is not contained in a unit ball, then $|\omega\Delta B_x|>0$, as $B_x$ is an optimal ball for the Fraenkel
asymmetry; otherwise, we could replace it by a ball strictly decreasing the perimeter. Therefore, we can use an analogous argument to that one 
of the proof of Proposition \ref{perimeter}, noticing moreover that the optimal ball is positioned on one of the four first components $\omega_i, i=1,\ldots 4$
(and actually on all) and that $P(\Omega_0)<20$ by statement  (3). Thus we can perform the minimization procedure shown in the proof of Proposition \ref{perimeter}
and we are able to replace the collection of balls by at most two balls (the biggest one which could also be
in contact with an optimal ball).

Property (\ref{4}) is proved in Section \ref{subsection_number_optimal_balls}, by using some optimality conditions satisfied by an optimal set  for $\mathcal{F}$.
We will compute the shape derivative of the Fraenkel asymmetry to prove these conditions. 
\subsection{Differentiability of the Fraenkel asymmetry}\label{sec5}
We are going to compute the shape derivative of the functional $\Omega\mapsto \la(\Omega)$. 
Since $\la$ is defined as a minimum, we first present a general lemma on differentiability 
of such functional in topological spaces.

Let $A,B$ be two topological spaces. We consider a function $j(x,\zeta):A\times B\to\R$ and we assume that the derivative of
$j$ with respect to the second variable $\zeta$ exists and is continuous with respect to $x$.
For each fixed $\zeta$, we define $\widehat{x}(\zeta)$ as a solution of $\min_{x\in A} j(x,\zeta)$. Let  $\lambda$ be defined on $B$ as the value of
the minimum: $\lambda(\zeta)=j(\widehat{x}(\zeta),\zeta)$.
\begin{lemma}\label{j-diff}
	 Assume that for some $\zeta_0\in B$ there exists a unique $\widehat{x}(\zeta_0)\in A$  where $j(\cdot,\zeta_0)$ attains its minimum.
	Then the function $\zeta \mapsto j(\widehat{x}(\zeta),\zeta)$ is differentiable at $\zeta_0$ and its derivative 
	is $\frac{\partial j}{\partial \zeta}(\widehat{x}(\zeta_0),\zeta_0)$.
\end{lemma}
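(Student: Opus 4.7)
The plan is to use a standard envelope-theorem (Danskin-type) argument, sandwiching the increment $\lambda(\zeta)-\lambda(\zeta_0)$ between two quantities that share the same limiting slope.

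First, since $\widehat{x}(\zeta)$ minimizes $j(\cdot,\zeta)$, the point $\widehat{x}(\zeta_0)$ is a valid competitor at parameter $\zeta$, so
$$
\lambda(\zeta)=j(\widehat{x}(\zeta),\zeta)\leq j(\widehat{x}(\zeta_0),\zeta).
$$
Subtracting $\lambda(\zeta_0)=j(\widehat{x}(\zeta_0),\zeta_0)$ and Taylor expanding $j(\widehat{x}(\zeta_0),\cdot)$ at $\zeta_0$ gives the upper estimate
$$
\lambda(\zeta)-\lambda(\zeta_0)\leq \frac{\partial j}{\partial \zeta}(\widehat{x}(\zeta_0),\zeta_0)\,(\zeta-\zeta_0)+o(|\zeta-\zeta_0|).
$$
For the matching lower estimate, I use $\widehat{x}(\zeta)$ as a competitor at $\zeta_0$, obtaining $\lambda(\zeta_0)\leq j(\widehat{x}(\zeta),\zeta_0)$ and hence
$$
\lambda(\zeta)-\lambda(\zeta_0)\geq j(\widehat{x}(\zeta),\zeta)-j(\widehat{x}(\zeta),\zeta_0)=\frac{\partial j}{\partial \zeta}(\widehat{x}(\zeta),\zeta_*)\,(\zeta-\zeta_0)
$$
for some $\zeta_*$ on the segment joining $\zeta_0$ and $\zeta$, by the mean value theorem applied in the second variable.

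To close the sandwich it remains to identify the limit of $\frac{\partial j}{\partial \zeta}(\widehat{x}(\zeta),\zeta_*)$ as $\zeta\to\zeta_0$. Since the partial derivative is assumed continuous in $x$, it is enough to show that $\widehat{x}(\zeta)\to \widehat{x}(\zeta_0)$. Here the uniqueness hypothesis plays its role: any cluster point $\bar{x}$ of the family $\{\widehat{x}(\zeta)\}$ as $\zeta\to \zeta_0$ must satisfy $j(\bar x,\zeta_0)=\lambda(\zeta_0)$, hence coincides with $\widehat{x}(\zeta_0)$. Dividing the sandwich by $\zeta-\zeta_0$ and passing to the limit yields differentiability with derivative $\frac{\partial j}{\partial \zeta}(\widehat{x}(\zeta_0),\zeta_0)$.

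The main obstacle is the convergence $\widehat{x}(\zeta)\to \widehat{x}(\zeta_0)$: strictly, it requires some pre-compactness of the family of near-minimizers together with a joint lower semicontinuity property of $j$ to transfer minimality to the limit. In the intended application, where $A=\R^2$ parametrizes ball centers and $j(x,\zeta)=|\Omega_\zeta \Delta B_x|$, admissible centers are confined to a bounded set and the map $(x,\zeta)\mapsto j(x,\zeta)$ is jointly continuous, so these hypotheses are automatically verified and the argument produces the envelope formula used in the sequel.
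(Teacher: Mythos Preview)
Your proof is correct and follows essentially the same Danskin-type envelope argument as the paper: use $\widehat{x}(\zeta_0)$ as a competitor at $\zeta$ for the upper bound, $\widehat{x}(\zeta)$ as a competitor at $\zeta_0$ for the lower bound, and close the sandwich via $\widehat{x}(\zeta)\to\widehat{x}(\zeta_0)$ together with continuity of $\partial j/\partial\zeta$ in $x$. Your explicit mention of the pre-compactness and lower semicontinuity needed to justify the convergence of minimizers is in fact more careful than the paper, which simply asserts that uniqueness yields $\widehat{x}(\zeta_0+th)\to\widehat{x}(\zeta_0)$.
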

Even if the previous result is classical in variational analysis, we prove it for sake of completeness.
\begin{proof}
Fix $\zeta_0,h\in B$ and let $t>0$ (the proof works as well for $t<0$). 
We are going to compute the derivative of $\la(\zeta)=j(\widehat{x}(\zeta),\zeta)$ along the 
direction $h$.
By definition of $\widehat{x}(\zeta)$ it holds 
$$
\lambda(\zeta_0+th)=j(\widehat{x}(\zeta_0+th),\zeta_0+th)\le j(\widehat{x}(\zeta_0),\zeta_0+th)\,.
$$
This implies that
$$
\frac{\lambda(\zeta_0+th)-\lambda(\zeta_0)}t \le  \frac{j(\widehat{x}(\zeta_0),\zeta_0+th)-j(\widehat{x}(\zeta_0),\zeta_0)}t
$$
and hence, passing to the limit as $t\to 0$, we get
$$
\limsup_{t\to 0} \frac{\lambda(\zeta_0+th)-\lambda(\zeta_0)}t \le \langle\frac{\partial j}{\partial \zeta}
(\widehat{x}(\zeta_0),\zeta_0);h\rangle.$$
The reverse inequality is obtained in an analogous way by observing that
$$
\lambda(\zeta_0)=j(\widehat{x}(\zeta_0),\zeta_0)\le  j(\widehat{x}(\zeta_0+th),\zeta_0),
$$
which implies
$$
\frac{\lambda(\zeta_0+th)-\lambda(\zeta_0)}t \ge  \frac{j(\widehat{x}(\zeta_0+th),\zeta_0+th)-j(\widehat{x}(\zeta_0+th),\zeta_0)}t.
$$
By the uniqueness of $\widehat{x}(\zeta_0)$, we have $\widehat{x}(\zeta_0+th) \to \widehat{x}(\zeta_0)$ as $h\to 0$. The continuity of the function 
$x \mapsto  \frac{\partial j}{\partial \zeta}(x,\zeta)$ implies
$$
\liminf_{t\to 0} \frac{\lambda(\zeta_0+th)-\lambda(\zeta_0)}t \ge \langle\frac{\partial j}{\partial \zeta}
(\widehat{x}(\zeta_0),\zeta_0);h\rangle,
$$
which gives the desired result.
\end{proof}
We are going to apply the previous lemma to $j(X,\Omega):=|B_X\Delta \Omega|/\pi$ in such a way that $\widehat{X}(\Omega)$
is the center of an optimal ball and $\lambda(\Omega)=j(\widehat{X}(\Omega),\Omega)$ is the Fraenkel asymmetry.
Notice that in this way we make an abuse of language, since $\Omega$ in fact does not belong to a topological space. Instead
we should consider the classical shape derivative, as explained in \cite[Chapter 5]{HP} and replace the set  $\Omega$
by a space of diffeomorphisms acting on a fixed domain. Since no confusion can occur, we keep this convenient way to present
the derivative. In that context, what we denote by $d\lambda(\Omega;V)$ is the limit, as $t\to 0$, of the ratio
$$
\frac{\lambda((Id+tV)(\Omega))-\lambda(\Omega)}t\,,
$$ 
where $V:\R^2\to\R^2$ 
is any regular vector field and $Id+tV$ a small
perturbation of the identity operator.
\begin{prop}\label{shdlambda}
Let $\Omega$ be a planar regular (Lipschitz) set of area $\pi$ and assume that it has a unique optimal ball for the Fraenkel asymmetry $\la$,
whose center is at $X^*$. Denote by $\Omega^{OUT}=\Omega\setminus B_{X^*}$ and $\Omega^{IN}=B_{X^*}\setminus \Omega$.
Then the shape derivative of $\la$ exists and is given by
\begin{equation}\label{der1}
d\la (\Omega;V)=\int_{\partial\Omega^{OUT}\setminus\partial B_{X^*}} V\cdot n \,ds - 
\int_{\partial\Omega^{IN}\setminus\partial B_{X^*}} V\cdot n \,ds,
\end{equation}
where $n$ is the exterior normal unit vector to the boundary of $\Omega$.
\end{prop}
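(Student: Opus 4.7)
The plan is to apply Lemma \ref{j-diff} to the function $j(X,\Omega) = |B_X \Delta \Omega|/\pi$ defined on $A\times B$, where $A = \R^2$ parametrizes the ball centers and $B$ is the space of admissible perturbations of $\Omega$ (formally the set of vector fields $V$ acting via $\mathrm{Id}+tV$ on the reference domain $\Omega$, as in \cite{HP}). By the very definition of the Fraenkel asymmetry and the uniqueness of the optimal center, $\lambda(\Omega) = \min_{X\in\R^2} j(X,\Omega) = j(X^*,\Omega)$. The lemma will then reduce the shape derivative of $\lambda$ to the partial shape derivative of $j$ with respect to $\Omega$ computed at $X = X^*$ \emph{kept fixed}, which is the crucial simplification.

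The core computation is the shape derivative of $\Omega\mapsto |B_X \Delta \Omega|$ with $B_X$ frozen. Using the identity
\[
|B_X \Delta \Omega| = |B_X| + |\Omega| - 2\,|B_X \cap \Omega|,
\]
together with the standard Hadamard formulas $d_\Omega |\Omega|(V) = \int_{\partial\Omega} V\cdot n\,ds$ and $d_\Omega |B_X \cap \Omega|(V) = \int_{\partial\Omega \cap B_X} V\cdot n\,ds$ (valid because $B_X$ is independent of $t$), I would obtain
\[
d_\Omega |B_X \Delta \Omega|(V) = \int_{\partial\Omega\setminus \overline{B_X}} V\cdot n\,ds - \int_{\partial\Omega \cap B_X} V\cdot n\,ds.
\]
Next I would note that, up to $\mathcal{H}^1$-null sets, $\partial\Omega \setminus \overline{B_X} = \partial\Omega^{OUT}\setminus\partial B_X$ and $\partial\Omega \cap B_X = \partial\Omega^{IN}\setminus \partial B_X$ (the two ``movable'' portions of the boundaries of $\Omega^{OUT}$ and $\Omega^{IN}$, the other portions sitting on the fixed circle $\partial B_X$ and therefore contributing nothing). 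Combining with Lemma \ref{j-diff} yields formula \eqref{der1} (up to the harmless normalization $1/\pi$).

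The main obstacle is checking the hypotheses of Lemma \ref{j-diff}, namely that $X\mapsto d_\Omega j(X,\Omega)$ is continuous at $X=X^*$. From the explicit expression above this amounts to showing that the surface integrals $\int_{\partial\Omega \cap B_X} V\cdot n\,ds$ depend continuously on $X$, which is immediate whenever $\partial\Omega$ meets $\partial B_X$ transversally but may fail at tangential configurations. For Lipschitz $\Omega$, however, tangential contacts with $\partial B_X$ occur only for a negligible set of centers $X$, and a transversal approximation argument (analogous to the one used in Lemma \ref{lemmapsi} and in the proof of Proposition \ref{centre_optimizing_ball}) restores continuity in a neighbourhood of $X^*$. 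The uniqueness of $X^*$ postulated in the statement then ensures that Lemma \ref{j-diff} applies and delivers the announced shape derivative.
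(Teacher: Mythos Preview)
Your proposal is correct and follows essentially the same approach as the paper: apply Lemma~\ref{j-diff} to $j(X,\Omega)=|B_X\Delta\Omega|/\pi$, compute the partial shape derivative in $\Omega$ with $X$ frozen via the Hadamard formula for volume (the paper simply cites \cite[Theorem~5.2.2]{HP} for this), and check the continuity hypothesis in $X$. Your write-up is in fact more detailed than the paper's own proof, which dispatches the continuity in $X$ with ``it is clear'' and does not spell out the decomposition $|B_X\Delta\Omega|=|B_X|+|\Omega|-2|B_X\cap\Omega|$; your observation about the missing $1/\pi$ normalization in \eqref{der1} is also well taken.
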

\begin{proof}
Assumptions of Lemma \ref{j-diff} are satisfied since the measure is differentiable, see for example
\cite[Theorem 5.2.2]{HP}. 
Notice that the opposite sign of the two terms in  (\ref{der1}) is due to the fact that the measure
of $\Omega$ is counted positively outside $B_{X^*}$ and negatively inside. Moreover, it is clear that the derivative
given in formula (\ref{der1}) is continuous with respect to $X$.
\end{proof}

%%%%%%%%%%%%%%%%%%%%%%%%
%%%%%%%%%%%%%%%%%%%%%%%%%
%%%%%%%%%%%%%%%%%%%%%%%%%%
%%%%%%%%%%%%%%%%%%%%%%%%%%%
\subsection{The optimal set has at least two optimal balls}\label{subsection_number_optimal_balls}
In this section we are going to prove that an optimal set for $\mathcal{F}$ cannot have only one optimal ball for the Fraenkel asymmetry. 
\begin{thm}
	Let $\Omega$ be a minimizing domain for the functional $\mathcal{F}$. Then $\Omega$ has at least two optimal balls for the Fraenkel asymmetry.
\end{thm}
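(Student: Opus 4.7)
The plan is to argue by contradiction: suppose $\Omega$ is a minimizer of $\mathcal{F}$ which admits a \emph{unique} optimal ball $B_{X^*}$ realizing the Fraenkel asymmetry. By Proposition \ref{shdlambda}, $\lambda$ is then shape-differentiable at $\Omega$, so $\mathcal{F}(\Omega)=\delta(\Omega)/\lambda(\Omega)^2$ itself admits a shape derivative. Writing the resulting Euler--Lagrange condition under the volume constraint $|\Omega|=\pi$, I obtain a Lagrange multiplier $\mu$ such that, for every admissible vector field $V$,
\begin{equation*}
\frac{1}{2\pi\lambda^2}\int_{\partial\Omega} H\,V\cdot n\,ds - \frac{2\delta}{\lambda^3}\left(\int_{\partial\Omega\setminus\overline{B_{X^*}}} V\cdot n\,ds - \int_{\partial\Omega\cap B_{X^*}} V\cdot n\,ds\right)=\mu\int_{\partial\Omega} V\cdot n\,ds.
\end{equation*}
Localising $V$ in each of the two open portions of $\partial\Omega$ shows that $H$ is constant on $\partial\Omega\setminus\overline{B_{X^*}}$ (call it $H_{\mathrm{out}}$) and constant on $\partial\Omega\cap B_{X^*}$ (call it $H_{\mathrm{in}}$), with $H_{\mathrm{out}}-H_{\mathrm{in}}$ equal to an explicit nonzero multiple of $\delta/\lambda$. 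This recovers Proposition \ref{thmqualit}(2): $\partial\Omega$ is a finite union of arcs of circles with two fixed radii $R_{\mathrm{out}}$ and $R_{\mathrm{in}}$, glued along $\partial B_{X^*}$.

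Next, I would combine this structural description with the centroid constraint of Proposition \ref{cdt_optimalite} (the intersection points $M_1,\dots,M_{2p}$ of $\partial\Omega$ and $\partial B_{X^*}$ satisfy the alternating relation $\sum_i (-1)^i M_i=0$), the lower bound $2p\geq 4$ from Remark \ref{remarknumberintersection}, the $C^{1,1}$ regularity of Proposition \ref{thmqualit}(1) (which forces the tangents of the two incident arcs at each $M_i$ to coincide with that of $\partial B_{X^*}$), and the a priori bounds of Proposition \ref{thmqualit}(3),(5). Together these reduce $\Omega$ to a short list of admissible configurations, parameterised by $2p$ and by the cyclic IN/OUT pattern of arcs along $\partial\Omega$. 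In each admissible configuration I would aim to produce a second center $X'\neq X^*$ which is also optimal for $\psi(X):=|\Omega\Delta B_X|$, typically obtained from a residual reflection symmetry of the arc pattern forced by the centroid condition, or else derive a direct contradiction by computing $\psi$ along a well-chosen curve through $X^*$ and exhibiting a distinct minimiser.

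The main obstacle is the case analysis. Each individual configuration is elementary once the radii $R_{\mathrm{out}}, R_{\mathrm{in}}$, the positions of the $M_i$'s and the tangent-matching conditions are written out, but the configurations must be enumerated exhaustively and a second optimal ball must be exhibited in each one. This matches the authors' own remark that the proof is ``lengthy but rather simple''; the difficulty lies in the enumeration and in the explicit construction of the alternative center $X'$, rather than in any single analytic step.
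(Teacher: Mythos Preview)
Your opening is correct and matches the paper: contradiction, unique optimal ball, shape-differentiability of $\lambda$, first-order Euler--Lagrange yielding constant curvature $R_0$ outside $B_{X^*}$ and $R_1$ inside, and hence a boundary made of arcs of two radii. (One small slip: $C^{1,1}$ regularity forces the tangents of the two incident arcs of $\partial\Omega$ at each $M_i$ to agree with \emph{each other}, not with the tangent of $\partial B_{X^*}$; in the paper's picture this is the collinearity of the two arc-centres with $M_i$.)

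The gap is in the endgame. Your plan is to exhibit, in each configuration, a second centre $X'\neq X^*$ with $\psi(X')=\psi(X^*)$, typically from a ``residual reflection symmetry''. This cannot work as stated. The $C^1$ gluing of arcs with two fixed radii along $\partial B_{X^*}$ forces each connected component of $\Omega$ to have \emph{rotational} symmetry of order $N\geq 2$ about the centre of $B_{X^*}$; all symmetries of the configuration fix $X^*$, so no symmetry argument will produce a distinct optimal centre. Nor does the paper ever attempt to find one.

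What the paper does instead is show that such a one-ball $\Omega$ cannot minimise $\mathcal{F}$. The rotational symmetry reduces everything to two angular parameters $(\alpha,\theta)$ and the integer $N$; the paper then derives, besides the area constraint and the first-order relation $1/R_0+1/R_1=8\delta/\lambda$, a \emph{second-order} optimality inequality $Q\geq 0$ (Proposition~\ref{optimalconditions_connected}(iv)) and the numerical bound $\mathcal{F}(\Omega)<0.406$ coming from Theorem~\ref{thmAFN}. The lengthy case analysis (Lemmas~\ref{44}--\ref{419}) shows that for every $(N,\alpha,\theta)$ at least one of these four conditions fails. Your proposal is missing precisely the two ingredients that carry the contradiction: the second-order condition and the comparison with the convex optimum $0.406$. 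Without them the enumeration you sketch has no mechanism to eliminate the candidate configurations.
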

The proof argues by contradiction. Notice that if $\Omega$, whose area equals $\pi$, has just one optimal ball $B$, then its boundary is composed by arcs of circles of radius $R_0$ outside $B$ and of radius $R_1$ inside $B$. 
Since $\partial \Omega$ is of class $C^1$,  each connected component $\omega$ of $\Omega$ has a rotational symmetry of order $N\geq 2$.
Let $x,y$ be two points on the boundary of $\omega$ such that the angle $\widehat{xOy}=\frac{\pi}{N}$ and the part of $\partial\omega$ between $x,y$ generates the whole boundary of $\omega$.
Since $\mathcal{F}$ is rotational invariant, we can assume the point $x$ to belong to the horizontal axis.
The strategy consists in using a simple parametrization of the boundary of $\Omega$ with two angles $\alpha,\theta$
as described below and to eliminate all possible values of these parameters by contradicting either a first order
or second order optimality condition or proving that the value of the functional $\mathcal{F}$ is greater than
0.406 (see Propositions \ref{optimalconditions_connected} and \ref{optimalconditions_NONconnected}).

We will distinguish the case where $\Omega$ is connected from the case where $\Omega$ is not connected in the next two subsections.
Figure \ref{fig-intervalli} summerizes the structure of the proof in the two cases, in terms of the range of the parameters $\alpha,\theta$.

\newcommand{\intervalli}{
	\begin{tikzpicture}[x=5mm,y=5mm]
	\draw[->] (0,0)node[below right]{$0$}--(10,0)node[right]{$\theta$};\draw[->] (0,0)--(0,11)node[left]{$\alpha$};
	\draw (0,3)node[ left]{$\dfrac{\pi}{N}$}--(8,3);
	\draw (4,0)node[below]{$\dfrac{\pi}{2N}$}--(4,0.2);\draw(4,3)--(4,10);\draw (0,10)node[left]{$\pi$}--(8,10)--(8,3);\draw[dotted](8,3)--(8,0)node[below]{$\dfrac{\pi}{N}$};
	\draw (0,5)node[left]{$\dfrac{\pi}{2}$}--(4,5);
	\draw (0,7.5)node[left]{$\dfrac{\pi}{2}+\dfrac{\pi}{N}$}--(4,7.5);
	\draw (0,9.5)[right]node{\footnotesize{Lemma \ref{48}}};\draw (0,9)node[right]{\footnotesize{Lemma \ref{49}}};\draw (0,8.5)node[right]{\footnotesize{Lemma \ref{411}}};
	\draw (0,6)node[right]{\footnotesize{Lemma \ref{47}}};
	\draw (0,4)node[right]{\footnotesize{Lemma \ref{45}}};
	\draw (4,7)node[right]{\footnotesize{Lemma \ref{44}}};
	\draw (4,-2.5)node{(a)};
	% % % % % % % % % % % % % %
	\begin{scope}[xshift=7cm]
	\draw[->] (0,0)node[below right]{$0$}--(10,0)node[right]{$\theta$};\draw[->] (0,0)--(0,11)node[left]{$\alpha$};
	\draw (4,1.5)--(4,10);\draw (4,0)node[below]{$\dfrac{\pi}{2N}$}--(4,0.2);
	\draw (0,10)node[left]{$\pi$}--(8,10)--(8,3);\draw[dotted] (8,3)--(8,0)node[below]{$\dfrac{\pi}{N}$};
	\draw (0,7)node[left]{$\alpha(N)$}--(4,7);
	\draw (0,3)node[left]{$\dfrac{\pi}{N}$}--(4,3);
	% % %
	\draw (0,8)node[right]{\footnotesize{Lemma \ref{415}}};
	\draw (0,5)node[right]{\footnotesize{Lemma \ref{417}}};
	\draw (4,8)node[right]{\footnotesize{Lemma \ref{416}}};
	\draw (0,2)node[right]{\footnotesize{Lemma \ref{418}}};
	\draw (4,-2.5)node{(b)};
	\draw (0,0)--(8,3); \path (6,2.5)node[below,rotate=20]{$\alpha=\theta$};
	\end{scope}
	\end{tikzpicture}
}
\begin{figure}[h]
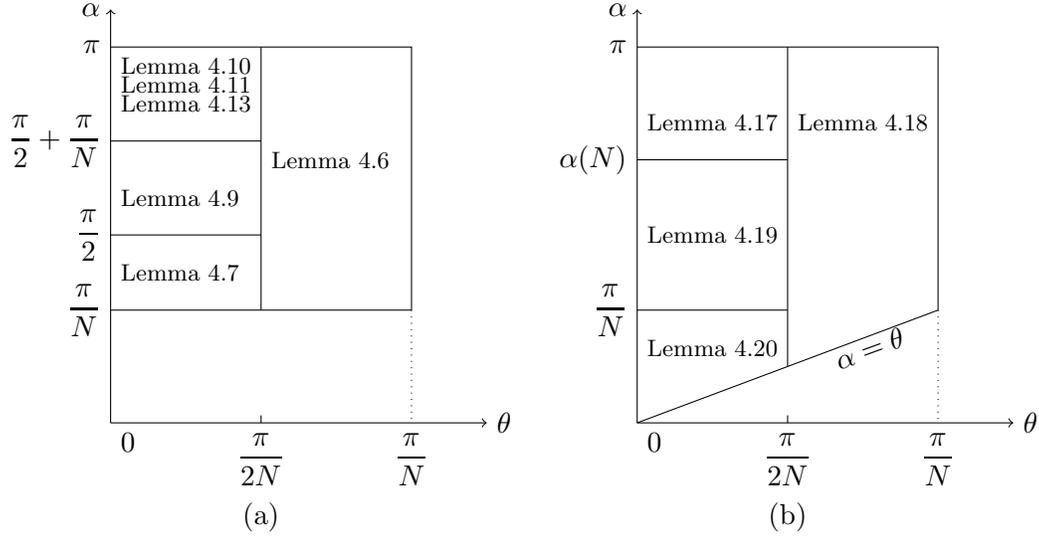

	\centering
	\intervalli
	\caption{(a) Connected case, $N\ge 4$. (b) Non-connected case, $N\ge 3$.}\label{fig-intervalli}
\end{figure}

\subsubsection{Connected case}
First of all, let us remark that, by statement (4) of Proposition \ref{thmqualit}, we can restrict the analysis to non-convex domains.
Let $M$ be an intersection point between $\partial \Omega$ and $\partial B$, $A$ be the center of the arc $\Gamma_0$ of radius $R_0$ and 
$B$ be the center of the arc $\Gamma_1$ of radius $R_1$. Since $\Omega$ is of class $C^1$, the points $A, M, B$ belong to a straight line. 

%%%% !!!!!!!!
\newcommand{\figuraCasoConnesso}{
\begin{tikzpicture}[x=4mm,y=4mm] 
\fill[gray!20] (3.75,0)--(4.75,0)arc(0:74:1);\draw (4.5,0.6)node{$\alpha$};
\fill[gray!50] (0,0)--(1,0)arc(0:28.6:1);\draw (1,0.5)node[right]{$\theta$};
\fill[gray!40] (60:14)--(60:13)arc(240:254:1);\draw (55:14)node[below]{$(\alpha-\frac{\pi}{N})$};
	\draw[thick] (0,0) circle(5);
	\draw[thick, blue] (6.25,0)arc(0:75:2.5); \draw[thick,dashed,blue] (6.25,0)arc(0:-75:2.5);
	\draw[thick,dashed, blue] (91.3:5)arc(225:240:10.3);\draw[thick,blue](60:4)arc(240:254:10.3); 
	\draw[thick, dashed, blue] (91.3:5)arc(45:195:2.5);
	\draw[thick,dashed,blue] (148.7:5)arc(15:-14:10.3);
	\draw[thick,dashed,blue] (211.3:5)arc(166:314:2.5);
	\draw[thick,dashed,blue] (-91.3:5)arc(136:106:10.1);
	\draw (3.75,0)--(28.7:5); \draw (3.75,0)--(7,12.18);
	\draw (0,0)--(28.7:5); %\draw (0,0)--(91.3:5);
	\draw[red] (0,0)--(8,0);\draw[red](0,0)--(60:14);\draw[red] (0,0)node[below, left]{$O$};
	\draw (28.7:5.2)node[right]{$M$};\fill (28.7:5)circle(1.5pt);\draw (3.7,0)node[below]{$A$};
	\draw[blue] (6.25,1)node[above, right]{$\Gamma_0$};
	\draw (60:14.2)node[right]{$B$};
	\draw[blue] (30:4)node[below,left]{$\Gamma_1$};
	\draw (6,0)node[below right]{$x$};\draw (60:3.7)node[left]{$y$};\fill (60:4)circle(1.5pt); \fill (6.25,0)circle(1.5pt);
	\fill[red] (0,0)--(0.5,0)arc(0:60:0.5);\draw[red,<-] (.5,-0)--(0.5,-0.5)node[below]{$\frac{\pi}{N}$};
	\end{tikzpicture}}
% % % % % !!!!!
\begin{figure}[h]
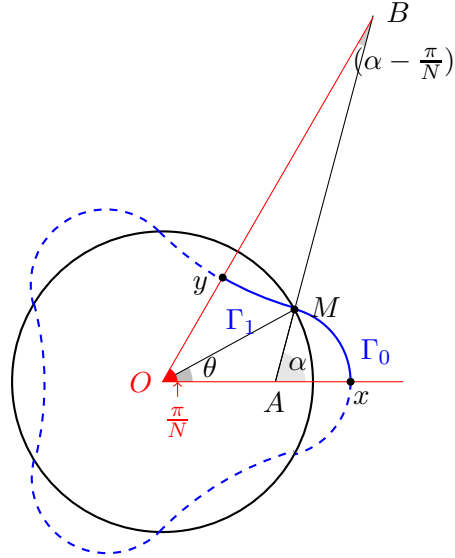

	\centering
	\figuraCasoConnesso
	\caption{Connected case. A set $\Omega$ with one optimal ball, $N=3$.}
\end{figure}

Let $\theta\in (0, \frac{\pi}{N})$ be the angle between $Ox$ and $OM$ and $\alpha\in (\frac{\pi}{N},\pi)$ be the angle between $Ax$ and $AM$. Notice that, by construction,  $0\le\theta\le\frac{\pi}{N}\le\alpha\le\pi$. The parametrization of the straight line passing through $A, M, B$ is
$x(t)=\cos \theta +t \cos \alpha,\ y(t)=\sin \theta +t \sin \alpha$, for $t\in \R$. Therefore the point $A$ can be obtained for $t_A=-\frac{\sin(\theta)}{\sin(\alpha)}$
and $B$ for
$t_B=-\frac{\sin(\pi/N-\theta)}{\sin(\alpha-\pi/N)}$. Therefore
\begin{equation}\label{R0R1}
R_0=|t_A|=\frac{\sin(\theta)}{\sin(\alpha)}, \qquad
R_1=|t_B|=\frac{\sin(\frac{\pi}{N}-\theta)}{\sin(\alpha-\frac{\pi}{N})}\,,
\end{equation}
and so
$$
|\Gamma_0|=\alpha \frac{\sin(\theta)}{\sin(\alpha)}, \qquad |\Gamma_1|=\left(\alpha-\frac{\pi}{N}\right) \frac{\sin(\frac{\pi}{N}-\theta)}{\sin(\alpha - \frac{\pi}{N})}.
$$ 
The perimeter of $\Omega$ equals $2N(|\Gamma_0|+|\Gamma_1|)$. Therefore
$$
\delta(\Omega)=
\frac{2N\left[\alpha \frac{\sin \theta}{\sin \alpha}+ \left(\alpha-\frac\pi N\right)\frac{\sin\left(\frac{\pi}{N}-\theta\right)}{\sin(\alpha - \frac{\pi}{N})}\right]-2\pi}{2\pi}.
$$ 
We are going to compute $\lambda(\Omega)=\frac{N}{\pi}(\mathcal{A}_0 +\mathcal{A}_1)$, where $N \mathcal{A}_0$ is the area of $\Omega\setminus B$ and 
$N \mathcal{A}_1$ is the area of $B \setminus \Omega$.
Let $g$ be defined by (\ref{defn_fonctions_gh}); notice that  
\begin{equation}\label{A0A1}
\mathcal{A}_0=R_0^2 g(\alpha) - g(\theta), \qquad \mathcal{A}_1=R_1^2 g\left(\alpha-\frac\pi N\right) + g\left(\frac \pi N-\theta\right).
\end{equation}
The proof will be splitted into several parts, according to the values $\theta, \alpha, N$. For every part, the contradiction will be given by the fact that $\Omega$ does not satisfy one of the  conditions 
expressed in the next proposition.
\begin{prop}\label{optimalconditions_connected}
	Let $\Omega$ be a connected planar set which minimizes the functional $\mathcal{F}$.
	Assume that $\Omega$ has a unique optimal ball and let $N\ge 2$ be the order of its rotational symmetry. 
	Then the following conditions hold.
	\begin{enumerate}
		\item[(i)]
		$\mathcal{A}_0=\mathcal{A}_1\leq \frac{\pi}{N}$, where $\mathcal{A}_0$ and $\mathcal{A}_1$ are defined in {\rm(\ref{A0A1})}.
		\item[(ii)]
		$
		\dfrac{1}{R_0}+\dfrac{1}{R_1}=\dfrac{8\delta(\Omega)}{\lambda(\Omega)}
		$, where $R_0$ and $R_1$ are defined in {\rm(\ref{R0R1})}.
		\item[(iii)]
%		$		\dfrac{\delta(\Omega)}{\lambda^2(\Omega)}< 0.406.$
		$\mathcal{F}(\Omega)< 0.406$.
		\item[(iv)] 
		$
		Q\geq 0
		$
		where $Q:=\frac{1}{\sin^3(\theta)}H(\alpha)-\frac{1}{\sin^3(\frac{\pi}{N}-\theta)}H(\alpha-\frac{\pi}{N})-\frac{32 N}{\pi}\frac{\delta(\Omega)}{\lambda^2(\Omega)}$, with  $H(x)=\frac{\sin^3(x)}{\tan(x)-x}$.
	\end{enumerate}
\end{prop}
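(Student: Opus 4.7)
The plan is to verify the four conditions in order, noting that (i)--(iii) are relatively direct while (iv) is a second-order optimality condition requiring more work.

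For (i), the identity $\mathcal{A}_0=\mathcal{A}_1$ comes from the area constraint $|\Omega|=\pi=|B|$ together with the $N$-fold symmetry: $|\Omega\setminus B|=N\mathcal{A}_0$ and $|B\setminus\Omega|=N\mathcal{A}_1$ must agree. The bound $\mathcal{A}_0\le \pi/N$ then follows from the universal estimate $\lambda(\Omega)\le 2$ combined with the identity $\lambda(\Omega)=N(\mathcal{A}_0+\mathcal{A}_1)/\pi=2N\mathcal{A}_0/\pi$.

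For (ii), I would use shape-derivative calculus. Given a smooth vector field $V$ with $v:=V\cdot n$ on $\partial\Omega$, set $a=\int_{\Gamma_0^{\mathrm{tot}}}v\,ds$ and $b=\int_{\Gamma_1^{\mathrm{tot}}}v\,ds$, where $\Gamma_0^{\mathrm{tot}},\Gamma_1^{\mathrm{tot}}$ denote the unions of the outer arcs (radius $R_0$, signed curvature $+1/R_0$) and inner arcs (radius $R_1$, signed curvature $-1/R_1$) with respect to the outward normal of $\Omega$. The Hadamard formula then yields $dP(V)=a/R_0-b/R_1$, Proposition \ref{shdlambda} gives $d\lambda(V)=(a-b)/\pi$ (after normalization by $|\Omega|=\pi$), and area preservation is $a+b=0$. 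Rewriting $d\mathcal{F}=0$ as $\lambda\,d\delta=2\delta\,d\lambda$ and substituting $b=-a$ gives
\[
\frac{\lambda}{2\pi}\left(\frac{1}{R_0}+\frac{1}{R_1}\right)a=\frac{4\delta}{\pi}a,
\]
whence $1/R_0+1/R_1=8\delta/\lambda$.

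Condition (iii) is immediate: since $\Omega$ minimizes $\mathcal{F}$ among non-disks and the optimal stadium of Theorem \ref{thmAFN} is a competitor with value $0.405585$, one has $\mathcal{F}(\Omega)\le 0.405585<0.406$.

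The main obstacle is (iv), which is a second-order optimality condition. Here I would restrict to the explicit two-parameter family $(\theta,\alpha)\mapsto\Omega(\theta,\alpha)$ and, invoking (i), view the minimization as a one-dimensional problem on the constraint $C(\theta,\alpha):=\mathcal{A}_0(\theta,\alpha)-\mathcal{A}_1(\theta,\alpha)=0$. Using (\ref{R0R1}) and the explicit expressions of $\delta$ and $\lambda$ already displayed, both $\delta$ and $\lambda$ become explicit functions of $(\theta,\alpha)$. The vanishing of the first derivative of $\mathcal{F}|_{C=0}$ reproduces the relation (ii), while the non-negativity of the second derivative (equivalently, of the bordered Hessian of $\mathcal{F}-\mu C$ restricted to the tangent line of $\{C=0\}$) yields $Q\ge 0$. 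The function $H(x)=\sin^3 x/(\tan x-x)$ is expected to arise through the second derivatives of the curvature terms $1/R_0=\sin\alpha/\sin\theta$ and $1/R_1=\sin(\alpha-\pi/N)/\sin(\pi/N-\theta)$, combined with the arc-length and area formulas, while the correction $-32N\delta/(\pi\lambda^2)$ encodes the quotient structure of $\mathcal{F}=\delta/\lambda^2$ at a critical point, where the first-order relation (ii) has been used to eliminate the Lagrange multiplier. The technical heart of the argument is the algebraic bookkeeping of the second partial derivatives of $\delta$, $\lambda$, and $C$ and the cancellations that collapse the resulting expression into the compact form of $Q$.
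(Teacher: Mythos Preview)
Your treatment of (i)--(iii) is correct and matches the paper: (i) is area balance plus $\lambda\le 2$, (ii) is the first-order optimality condition via shape derivatives (you impose area preservation directly, the paper uses a Lagrange multiplier $\mu$ and then adds the two resulting relations $1/R_0=4\delta/\lambda+2\pi\mu\lambda^2$, $1/R_1=4\delta/\lambda-2\pi\mu\lambda^2$; the two routes are equivalent), and (iii) is comparison with the Alvino--Ferone--Nitsch stadium.

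For (iv) your strategy differs from the paper's and this is where there is a genuine gap. You propose to stay inside the two-parameter $C^1$ family $\Omega(\theta,\alpha)$ and compute the second variation of $\mathcal{F}$ along the constraint curve $\mathcal{A}_0=\mathcal{A}_1$ in the $(\theta,\alpha)$ plane. The paper does something else: it \emph{fixes} $\theta$ (so the intersection points $\partial\Omega\cap\partial B$ do not move) and perturbs the outer-arc angle to $\alpha+\varepsilon_0$ and the inner-arc angle to $\alpha-\varepsilon_1$, with $\varepsilon_1$ determined by area preservation. This decouples the two curvatures and produces competitors that are no longer $C^1$ at the junction points. Expanding $\mathcal{F}$ to second order in $\varepsilon_0$ (after eliminating $\varepsilon_1=a\varepsilon_0+b\varepsilon_0^2$), the first-order term vanishes by (ii) and the sign of the $\varepsilon_0^2$ coefficient, after using (ii) once more, is exactly $Q$.

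These are different directions in the space of admissible sets, and second-order conditions in different directions are in general different inequalities. Your constrained-$(\theta,\alpha)$ variation would certainly produce \emph{some} second-order inequality, but there is no reason to expect it to coincide with the stated $Q$: your test direction moves the intersection points and keeps the boundary $C^1$, while the paper's test direction freezes the intersection points and breaks $C^1$. The clean structure of $Q$---two terms $H(\alpha)/\sin^3\theta$ and $H(\alpha-\pi/N)/\sin^3(\pi/N-\theta)$ with no cross-terms---reflects precisely this decoupling of inner and outer arcs at fixed $\theta$. So to prove (iv) as stated you need the paper's specific perturbation, not the bordered-Hessian computation on the $(\theta,\alpha)$ family.
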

\begin{proof}
	$(i)$	
	    The equality follows from the fact that $\pi=|\Omega|=|\Omega\setminus B|+|B\setminus \Omega|=|B\setminus \Omega|+|B\setminus \Omega|=|B|$.

	$(ii)$
		This condition comes from the first order optimality condition.
		Indeed, the general optimality condition is $d(\frac{\delta}{\lambda^2},V)=\mu\ d(Area,V)$, where $\mu$ is a Lagrange multiplier due to the fact that  the areas are fixed, and 
		$V: \R^2\to \R^2$ is any regular vector field. This gives
		\begin{eqnarray*}
		&\displaystyle \frac{1}{\lambda^2}\left[\frac{1}{2\pi}\int_{\Gamma_0}\frac{1}{R_0}V\cdot n-\frac{1}{2\pi}\int_{\Gamma_1}\frac{1}{R_1}V\cdot n\right]-\frac{2}{\pi}
		\frac{\delta}{\lambda^3}\left[\int_{\Gamma_0}V\cdot n-\int_{\Gamma_1}V\cdot n\right]&\\
		&\displaystyle =\mu\left[\int_{\Gamma_0}V\cdot n+\int_{\Gamma_1}V\cdot n\right].&
			\end{eqnarray*}
		Since this is true for every $V$, we obtain 
		$$
		\frac{1}{R_0}=4\frac{\delta}{\lambda}+2\pi\mu \lambda^2,\qquad \frac{1}{R_1}=4\frac{\delta}{\lambda}-2\pi\mu \lambda^2,
		$$
		and hence $\frac1{R_0}+\frac1{R_1}=\frac{8\delta}{\la}$.
	
	 $(iii)$
This is a consequence of Theorem \ref{thmAFN}.

	 $(iv)$
This is actually a second order optimality condition.		We are going to modify $\Omega$ by replacing $\alpha$ by $\alpha + \varepsilon_0$ in $\Omega\setminus B_1$ and
		by replacing $\alpha$ by $\alpha - \varepsilon_1$ in $B_1\setminus \Omega$ in such a way that the area of $\Omega$ 
		is preserved. 
		We have
		$$
		R_0^{\varepsilon_0}=
		\frac{\sin \theta}{\sin \alpha}+\varepsilon_0 \cos \alpha-\frac{\varepsilon_0^2}{\sin \alpha}=R_0[1-\varepsilon_0 \cot \alpha +\varepsilon_0^2(1/2+\cot^2 \alpha)],
		$$
		and
		$$
		(R_0^{\varepsilon_0})^2=R_0^2[1-2 \varepsilon_0 \cot \alpha +\varepsilon_0^2(1+3\cot^2 \alpha)].
		$$
		Moreover 
		$
		\mathcal{A}_0^{\varepsilon_0}=(R_0^{\varepsilon_0})^2g(\alpha+\varepsilon_0)-g(\theta)
		$
		and
		$
		g(\alpha+\varepsilon_0)=g(\alpha)+\varepsilon_0 (1-\cos(2\alpha))+\varepsilon_0^2\sin(2\alpha)
		$.
		Therefore 
		$$
		\mathcal{A}_0^{\varepsilon_0}=\mathcal{A}_0+\varepsilon_0 R_0^2 2(1-\alpha \cot \alpha)+\varepsilon_0^2 R_0^2 (\alpha+3\alpha \cot^2\alpha -3\cot \alpha)\,.
		$$
		By using an anoalogous argument we obtain
		$$
		\mathcal{A}_1^{\varepsilon_1}=\mathcal{A}_1+\varepsilon_1 R_1^2 2(1-\beta \cot \beta)-\varepsilon_1^2 R_1^2 (\beta+3\beta \cot^2\beta -3\cot \beta),
		$$
		where 
		$\beta=\alpha-\frac{\pi}{N}$.
		Keeping the total area constant, it holds
		\begin{eqnarray*}
	    &2R_0^2(1-\alpha \cot \alpha)\varepsilon_0 + R_0^2\varepsilon_0^2[\alpha+3\alpha \cot^2\alpha -3\cot \alpha]&\\
	    &=
		2R_1^2(1-\beta \cot \beta)\varepsilon_1 - R_1^2\varepsilon_1^2[\beta+3\beta \cot^2\beta -3\cot \beta].&
		\end{eqnarray*}
		Notice that we can express $\varepsilon_1$ as a function of $\varepsilon_0$, in the form $\varepsilon_1=a\varepsilon_0+b \varepsilon_0^2$, with 
		\begin{eqnarray*}
		&&a=\frac{R_0^2}{R_1^2}\frac{1-\alpha \cot \alpha}{1-\beta \cot \beta},\\
		&&b=\frac{R_0^2(\alpha+3\alpha \cot^2\alpha -3\cot \alpha)+R_1^2(\beta+3\beta \cot^2\beta -3\cot \beta)a^2}{2R_1^2(1-\beta\cot\beta)}.
		\end{eqnarray*}
		Let us consider the variations of the perimeter, that is, $\Delta P=\Delta P_{e}+\Delta P_i$, where 
		\begin{eqnarray*}
		\Delta P_{e}&=&2R_0^{\varepsilon_0}(\alpha+\varepsilon)-2R_0\alpha\\
		&=&2R_0\varepsilon_0(1-\alpha \cot\alpha)+R_0\varepsilon_0^2(\alpha+2\alpha\cot^2\alpha-2\cot\alpha), \\
		\Delta P_{i}&=&
		2 R_1^\varepsilon(\beta-\varepsilon_1)-2R_1\beta\\
		&=&2R_1\varepsilon_1(1-\beta \cot \beta)-R_1\varepsilon_1^2(\beta+2\beta\cot^2\beta-2\cot\beta).
			\end{eqnarray*}
	  Recalling that $\varepsilon_1=a\varepsilon_0+b \varepsilon_0^2$, we get
		$$
		\Delta P_{i}=
		2\frac{R_0^2}{R_1}(1-\alpha\cot\alpha)\varepsilon_0 +
		\left[\frac{R_0^2}{R_1}(\alpha+3\alpha\cot^2\alpha-3\cot\alpha)+R_1a^2\cot\beta(\beta \cot \beta-1)\right]\varepsilon_0^2.
		$$
		Regarding the variation of the area of the symmetric difference, we have $|\Omega^\varepsilon\Delta B|=|\Omega\Delta B|+2\Delta A_e$, where $\Delta A_e$ is the variation of the external area.
			 
		Therefore 
		$$
		\mathcal{F}(\Omega_\varepsilon)=\dfrac{\dfrac{P+\Delta P-2\pi}{2\pi}}{\left(\dfrac{|\Omega\Delta B|+2\Delta A_e}{\pi}\right)^2}=
		\mathcal{F}(\Omega)\dfrac{1+N\dfrac{\Delta P}{2\pi\delta}}{\left(1+\dfrac{2N\Delta A_e}{\pi\lambda}\right)^2},
		$$
		and $\Omega$ is a minimizer of $\mathcal{F}$ if 
		$$
		\frac{\Delta P}{2\pi\delta}-4\frac{\Delta A}{\pi\lambda}-4N\frac{\Delta A_e}{\pi\lambda^2}\geq 0.
		$$
		It is easy to see that the $\varepsilon_0$ term of the previous quantity  is null, by using the optimality condition $(ii)$. 
		Moreover the $\varepsilon_0^2$ term is
		\begin{eqnarray*}
		&&\frac{1}{2\pi\delta}[R_0(\alpha +2\alpha \cot^2\alpha-2\cot\alpha)+\frac{R_0^2}{R_1}(\alpha+3\alpha \cot^2\alpha-3\cot\alpha)-R_1a^2\cot\beta(1-\beta \cot\beta)]+\\
		&&-\frac{4}{\pi\lambda}R_0^2(\alpha+3\alpha\cot^2\alpha-3\cot \alpha)-\frac{16NR_0^4}{\pi^2\lambda^2}(1-\alpha\cot\alpha)^2.
		\end{eqnarray*}
		Using again condition $(ii)$, we can say that the above quantity is positive if and only if $Q\geq 0$.
\end{proof}

\begin{lemma}\label{44}
		Let $\Omega$ be a connected planar set which minimizes the functional $\mathcal{F}$.
		Assume that $\Omega$ has a unique optimal ball and let $N$ be the order of its rotational symmetry. 
		If  $N\geq 4$, $\frac{\pi}{2N}\leq \theta\leq \frac{\pi}{N}$ and $\frac{\pi}{N}\leq \alpha\leq \pi$, then $\mathcal{A}_0-\mathcal{A}_1>0$.
\end{lemma}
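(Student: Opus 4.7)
My plan is to reduce the two-variable inequality to a one-variable one via monotonicity in $\alpha$, and then to treat the one-variable inequality by a derivative/endpoint analysis.

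Using (\ref{R0R1}) and (\ref{A0A1}) together with $g(x) = \sin^2(x)\, h(x)$, I rewrite
\[
\mathcal{A}_0 = \sin^2(\theta)\, h(\alpha) - g(\theta), \qquad
\mathcal{A}_1 = \sin^2\bigl(\tfrac{\pi}{N}-\theta\bigr)\, h\bigl(\alpha - \tfrac{\pi}{N}\bigr) + g\bigl(\tfrac{\pi}{N}-\theta\bigr).
\]
Differentiating in $\alpha$,
\[
\partial_\alpha(\mathcal{A}_0 - \mathcal{A}_1) = \sin^2(\theta)\, h'(\alpha) - \sin^2\bigl(\tfrac{\pi}{N}-\theta\bigr)\, h'\bigl(\alpha - \tfrac{\pi}{N}\bigr).
\]
The assumption $\theta \geq \pi/(2N)$, with both $\theta$ and $\pi/N - \theta$ lying in $[0,\pi/4]$ since $N \geq 4$, gives $\sin^2(\theta) \geq \sin^2(\pi/N - \theta)$. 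I would then show that $h'$ is strictly increasing on $(0,\pi)$: the numerator of $h''(x)$ reduces to $2[2x + x\cos(2x) - \tfrac{3}{2}\sin(2x)]$, which vanishes at $0$ and has derivative $4\sin(x)[\sin(x) - x\cos(x)] > 0$ on $(0,\pi)$. Hence $h'(\alpha) \geq h'(\alpha - \pi/N)$, both factors point the same way, and $\partial_\alpha(\mathcal{A}_0 - \mathcal{A}_1) \geq 0$: it suffices to prove the inequality at $\alpha = \pi/N$.

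At $\alpha = \pi/N$ one has $h(\alpha - \pi/N) = h(0) = 0$, so $\mathcal{A}_1 = g(\pi/N - \theta)$ and the goal becomes
\[
F(\theta) := \sin^2(\theta)\, h(\tfrac{\pi}{N}) - g(\theta) - g\bigl(\tfrac{\pi}{N}-\theta\bigr) > 0 \quad \text{on} \quad [\tfrac{\pi}{2N},\,\tfrac{\pi}{N}).
\]
Using the identity $g(\theta) + g(\pi/N - \theta) = \pi/N - \sin(\pi/N)\cos(2\theta - \pi/N)$ (an immediate consequence of $g(x) = x - \tfrac{1}{2}\sin(2x)$ and sum-to-product), one computes
\[
F'(\theta) = \bigl[h(\tfrac{\pi}{N}) - \sin(\tfrac{2\pi}{N})\bigr]\sin(2\theta) + 2\sin^2(\tfrac{\pi}{N})\cos(2\theta),
\]
a single sinusoid in $u = 2\theta$; as $u$ ranges over $[\pi/N, 2\pi/N]$ of length $\pi/N < \pi$, $F'$ admits at most one zero. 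A direct computation gives $F(\pi/N) = 0$ and $F'(\pi/N) = 2[\beta\cos\beta - \sin\beta]/\sin(\beta) < 0$ for $\beta = \pi/N \in (0,\pi/2)$. The remaining endpoint inequality $F(\pi/(2N)) > 0$ is equivalent to $g(\beta) > 8\, g(\beta/2)\cos^2(\beta/2)$, which I would settle by noting that $G(\beta) := g(\beta) - 8\, g(\beta/2)\cos^2(\beta/2)$ satisfies $G(0) = 0$ and $G'(\beta) = 4\sin(\beta)\, g(\beta/2) > 0$ for $\beta > 0$ (using $g'(x) = 2\sin^2 x$).

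Since $F'$ has at most one sign change on $[\pi/(2N),\pi/N]$ and is negative at the right endpoint, $F$ is either monotone decreasing or first increasing then decreasing; in both cases, with $F(\pi/(2N)) > 0$ and $F(\pi/N) = 0$, the minimum on the interval is $0$, attained only at $\pi/N$, so $F > 0$ on $[\pi/(2N),\pi/N)$. Combined with the $\alpha$-monotonicity, this gives $\mathcal{A}_0 > \mathcal{A}_1$ throughout the admissible region (the degenerate corner $\theta = \alpha = \pi/N$, where $\Omega$ would reduce to the unit ball, being excluded by the minimization hypothesis $\lambda(\Omega)>0$). I expect the main delicate point to be exploiting the single-sinusoid structure of $F'$ to control the one-variable analysis, together with the elementary but computational endpoint inequality $g(\beta) > 8\, g(\beta/2)\cos^2(\beta/2)$; both rest on the concrete identities for $g$ and $h$ rather than on any subtle trick.
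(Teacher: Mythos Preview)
Your argument is correct and follows essentially the same route as the paper: reduce to $\alpha=\pi/N$ via monotonicity in $\alpha$, then analyze the one-variable function through its endpoint values and the fact that its derivative has a single zero on the interval. You supply considerably more detail than the paper does (the convexity of $h$, the explicit sinusoidal form of $F'$, and the endpoint inequality $g(\beta)>8g(\beta/2)\cos^2(\beta/2)$), but the skeleton of the proof is identical.
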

\begin{proof}
	Let us set  $z(\theta,\alpha):=\mathcal{A}_0-\mathcal{A}_1$. It is easy to prove that  $\frac{\partial z}{\partial \alpha}>0$; therefore $z(\theta,\alpha)\geq z(\theta,\pi/N)$. 
	It is sufficient to prove that  $Z(\theta):=z(\theta,\pi/N)\geq 0$.
	We observe that $Z(\frac{\pi}{N})=0$ and $Z(\frac{\pi}{2N})>0$. 
	Notice that $Z'$ has only one zero  $\theta_0$ in $(0,\frac{\pi}{N})$, is positive in $(0,\theta_0)$ and negative in $(\theta_0,\frac{\pi}{N})$, therefore $Z$ is positive.
\end{proof}

%\begin{lemma}
%Let $N\geq 4$.
%Let $\frac{\pi}{2N}\leq \theta\leq \frac{\pi}{N}$ and $\frac{\pi}{N}\leq \alpha\leq \pi$. Then $\frac{\delta}{\lambda^2}>0.406$
%\end{lemma}

In the sequel we will often use the following formula:
$$
\frac{1}{R_0}+\frac{1}{R_1}=\frac{\sin\left(\frac{\pi}{N}\right)\sin(\alpha-\theta)}{\sin(\theta)\sin\left(\frac{\pi}{N}-\theta\right)}.
$$
Notice that
$$
\mathcal{A}_0=\sin^2\theta h(\alpha) - g(\theta), \qquad \mathcal{A}_1=\sin^2\left(\theta-\frac{\pi}{N}\right) h\left(\alpha-\frac{\pi}{N}\right) + g\left(\frac{\pi}{N}-\theta\right)\,,
$$
where $g, h$ have been defined in (\ref{defn_fonctions_gh}).
\begin{lemma}\label{45}
	Let $\Omega$ be a connected planar set which minimizes the functional $\mathcal{F}$.
	Assume that $\Omega$ has a unique optimal ball and let $N$ be the order of its rotational symmetry. 
	If $N\geq 4$, $0\leq \theta\leq \frac{\pi}{2N}$ and $\frac{\pi}{N}\leq \alpha\leq \frac{\pi}{2}$, then $\mathcal{F}(\Omega)>0.406$.
\end{lemma}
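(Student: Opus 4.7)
The plan is to rule out this case by direct estimation: we compute $\mathcal{F}(\Omega)$ along the constraint curve defined by property (i) and show it exceeds $0.406$ in the given rectangle, contradicting property (iii) of Proposition \ref{optimalconditions_connected}. By (i) we have $\lambda(\Omega) = 2N\mathcal{A}_0/\pi$, so using the explicit expressions for $\delta(\Omega)$ and $\mathcal{A}_0$ derived from the parametrization,
\begin{equation*}
\mathcal{F}(\Omega) = \frac{\pi^2\,\delta(\Omega)}{4N^2\mathcal{A}_0^2} = \frac{\pi\left[N\alpha\frac{\sin\theta}{\sin\alpha} + N(\alpha-\tfrac{\pi}{N})\frac{\sin(\tfrac{\pi}{N}-\theta)}{\sin(\alpha-\tfrac{\pi}{N})} - \pi\right]}{2 N^2\left(\sin^2\theta\, h(\alpha) - g(\theta)\right)^2},
\end{equation*}
with the constraint
\begin{equation*}
\sin^2\theta\, h(\alpha) - g(\theta) \;=\; \sin^2\!\bigl(\tfrac{\pi}{N}-\theta\bigr)\, h\!\bigl(\alpha-\tfrac{\pi}{N}\bigr) + g\!\bigl(\tfrac{\pi}{N}-\theta\bigr).
\end{equation*}
This cuts out a one-dimensional curve $\alpha=\alpha(\theta,N)$ inside the rectangle $\{0\le\theta\le\pi/(2N),\ \pi/N\le\alpha\le\pi/2\}$.

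First I would analyze the qualitative picture. In this regime $R_0 = \sin\theta/\sin\alpha$ is very small (at most $\sin(\pi/(2N))/\sin(\pi/N)$), whereas $R_1=\sin(\pi/N-\theta)/\sin(\alpha-\pi/N)$ remains of order one or larger. This asymmetry between inner and outer arcs inflates $\delta(\Omega)$ relative to $\mathcal{A}_0$: a tiny $\mathcal{A}_0 = \mathcal{A}_1$ is paired with a relatively long outer arc of small radius, so the perimeter excess is large compared to the square of the asymmetry. Concretely, as $\theta\to 0$ we have $\mathcal{A}_0\sim \sin^2\theta\,h(\alpha)$, which forces $\alpha$ to approach $\pi/N$ from above along the constraint (so that $\mathcal{A}_1\to 0$ as well), and along this regime one checks that $\mathcal{F}$ blows up.

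Second, I would reduce to showing the inequality for $N=4$ and for $\theta$, $\alpha$ bounded away from the degenerate corner $(\theta,\alpha)=(0,\pi/N)$. A monotonicity check in $N$ (for fixed geometric position within each sector, larger $N$ means more sectors, hence a more isoperimetrically-inefficient boundary) isolates $N=4$ as the critical case. On the corresponding compact region of the constraint curve, I would compute the minimum of $\mathcal{F}$ by implicit differentiation: differentiating the constraint and setting $d\mathcal{F}/d\theta=0$ gives a tractable one-variable problem. Combining with the asymptotic blow-up at the corner, one obtains a uniform lower bound strictly greater than $0.406$ (interval/numerical verification suffices).

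The main obstacle is purely computational: the constraint $\mathcal{A}_0=\mathcal{A}_1$ is transcendental and cannot be inverted in closed form, so the bound on $\mathcal{F}$ must be obtained either by careful sign analysis of $\partial_\theta \mathcal{F}$ along the curve, or by a rigorous numerical/interval-arithmetic sweep of the compact region of admissible $(\theta,\alpha)$ for $N=4, 5, \ldots$ until a monotonicity argument takes over for large $N$. A minor difficulty is that the naive lower bound $\mathcal{F}\ge \delta/(\lambda\cdot 2)$ is too weak near $\theta\to 0$, so one really needs the full parametric form to detect the blow-up, rather than a one-shot inequality.
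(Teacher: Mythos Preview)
Your approach is plausible in spirit but substantially more laborious than needed, and as written it has real gaps: the monotonicity in $N$ is asserted heuristically (``more sectors, hence more inefficient'') rather than proved, and the final step is deferred to an unspecified interval-arithmetic sweep. Neither of these is completed, so the argument is an outline rather than a proof.

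The paper's proof bypasses all of this by exploiting the first-order optimality condition (ii) of Proposition~\ref{optimalconditions_connected}, which you never use. From $\tfrac{1}{R_0}+\tfrac{1}{R_1}=\tfrac{8\delta}{\lambda}$ one gets immediately a lower bound on the ratio $\delta/\lambda$, not on $\delta$ alone:
\[
\frac{8\delta(\Omega)}{\lambda(\Omega)}=\frac{1}{R_0}+\frac{1}{R_1}
=\frac{\sin(\pi/N)\sin(\alpha-\theta)}{\sin\theta\,\sin(\pi/N-\theta)}\ge 2\cos\!\Bigl(\frac{\pi}{2N}\Bigr),
\]
the last step by elementary trigonometry on the given rectangle. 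Then a crude upper bound on $\lambda(\Omega)=\tfrac{2N}{\pi}\mathcal{A}_0$ alone suffices: since $h$ is increasing and $\alpha\le\pi/2$, one has $\mathcal{A}_0\le \sin^2\theta\,h(\pi/2)-g(\theta)$, which is monotone in $\theta$ and maximized at $\theta=\pi/(2N)$. Multiplying the two bounds gives $\mathcal{F}(\Omega)=\tfrac{\delta}{\lambda}\cdot\tfrac{1}{\lambda}>0.406$ for every $N\ge 4$ by a one-line numerical check, with no need to parametrize the constraint curve, no implicit differentiation, and no case-by-case treatment of $N$. The point you are missing is precisely this decoupling: condition (ii) lets you bound $\delta/\lambda$ and $\lambda$ separately, so you never have to confront the transcendental constraint $\mathcal{A}_0=\mathcal{A}_1$ head-on.
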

\begin{proof}
	We observe that 
	$$
	\frac{1}{R_0}+\frac{1}{R_1}=\frac{\sin\left(\frac{\pi}{N}\right)\sin(\alpha-\theta)}{\sin(\theta)\sin\left(\frac{\pi}{N}-\theta\right)}\geq 2\cos\left(\frac{\pi}{2N}\right).
	$$ 
	 Proposition \ref{optimalconditions_connected} implies that
	\begin{equation}\label{prima}
	\frac{\delta(\Omega)}{\lambda(\Omega)}\geq \frac 14 \cos\left(\frac{\pi}{2N}\right),
	\end{equation}
	and 	$\lambda(\Omega)=\frac{N}{\pi}(\mathcal{A}_0+\mathcal{A}_1)=\frac{2N}{\pi}\mathcal{A}_0$.
	Notice that  
	\begin{eqnarray}\label{seconda}
	\lambda(\Omega)=\frac{2N}{\pi}\mathcal{A}_0
	&\leq& \frac{2N}{\pi}\left[\sin^2(\theta)\ h\left(\frac{\pi}{2}\right)-\theta+\sin(\theta)\cos(\theta)\right]
	\\\nonumber
	&\leq& \frac{2N}{\pi}\left[\sin^2 \left(\frac{\pi}{2N}\right) \frac{\pi}{2}-\frac{\pi}{2N}+\sin\left(\frac{\pi}{2N}\right)\cos\left(\frac{\pi}{2N}\right)\right].
	\end{eqnarray}
	By (\ref{prima}) and (\ref{seconda}) one has
	$\mathcal{F}(\Omega)>0.406$, since $N\geq 4$.
\end{proof}

\begin{rem}
	In the sequel we will often use the above technique to get a contradiction.
\end{rem}
\begin{lemma}\label{zone3}\label{47}
	Let $\Omega$ be a connected planar set which minimizes the functional $\mathcal{F}$.
	Assume that $\Omega$ has a unique optimal ball and let $N$ be the order of its rotational symmetry. 
	If $N\geq 4$, $0\leq \theta\leq \frac{\pi}{2N}$ and $\frac{\pi}{2}\leq \alpha\leq  \frac{\pi}{2}+\frac{\pi}{N}$, then $Q<0$,where $Q$ is  defined in Proposition \ref{optimalconditions_connected}.
\end{lemma}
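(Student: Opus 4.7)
The plan is to contradict condition (iv) of Proposition \ref{optimalconditions_connected} by showing that each of the three summands making up $Q$ is non-positive, with at least one strictly negative. The mechanism is a sign analysis of the function $H(x)=\sin^3(x)/(\tan(x)-x)$ exploiting the classical inequality $\tan x \geq x$ on $(0,\pi/2)$ (with equality only at $0$) together with the fact that $\tan x$ becomes negative on $(\pi/2,\pi)$.

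First I would establish the sign of $H(\alpha)$. In the range $\alpha\in[\pi/2,\pi/2+\pi/N]$, noting that $\pi/N\leq \pi/4$ so $\alpha<\pi$, one has $\sin^3(\alpha)>0$, while $\tan(\alpha)\leq 0< \alpha$ for $\alpha\in(\pi/2,\pi/2+\pi/N]$, giving $\tan(\alpha)-\alpha<0$. Hence $H(\alpha)<0$ on this half-open interval; at $\alpha=\pi/2$ the function $H$ extends continuously to $0$ (the limit of $\sin^2\alpha\cos\alpha$). In any case $H(\alpha)\le 0$ throughout the range, and so the first summand $\frac{1}{\sin^3(\theta)}H(\alpha)$ of $Q$ is $\le 0$ (noting $\theta\in(0,\pi/(2N)]$ so $\sin^3\theta>0$).

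Next I would handle the second summand. From $\alpha\in[\pi/2,\pi/2+\pi/N]$ we deduce $\alpha-\pi/N\in[\pi/2-\pi/N,\pi/2]\subset[\pi/4,\pi/2]$, where I used $N\ge 4$. On this interval $\tan(\alpha-\pi/N)\ge \alpha-\pi/N>0$ and $\sin^3(\alpha-\pi/N)>0$, so $H(\alpha-\pi/N)\ge 0$. Since $\pi/N-\theta\in(\pi/(2N),\pi/N]\subset(0,\pi/2)$, we also have $\sin^3(\pi/N-\theta)>0$. Therefore the second summand $-\frac{1}{\sin^3(\pi/N-\theta)}H(\alpha-\pi/N)$ is $\le 0$.

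Finally, the third summand $-\frac{32N}{\pi}\,\delta(\Omega)/\lambda^2(\Omega)$ is strictly negative because $\Omega$ is not a ball (so $\delta(\Omega)>0$, $\lambda(\Omega)>0$). Adding the three contributions gives $Q<0$, which contradicts (iv) and concludes the proof. I do not expect a real obstacle here: all inequalities are elementary, and the slight delicacy is only in handling the endpoints $\alpha=\pi/2$ (where $H$ is defined by its removable-singularity value $0$) and $\alpha-\pi/N=\pi/2$ (where again $H=0$), which cannot occur simultaneously, but in either case the two non-positivity assertions remain valid and the strict negativity of the third term suffices.
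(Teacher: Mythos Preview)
Your proof is correct and follows exactly the same approach as the paper's own proof, which simply states that $H(\alpha)\leq 0$ and $H(\alpha-\pi/N)\geq 0$ on the given range, hence $Q<0$. Your version is more detailed (handling the endpoint $\alpha=\pi/2$ via the removable singularity and making explicit why the third term is strictly negative), but the argument is identical in substance.
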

\begin{proof}
Since $H(\alpha)\leq 0$ and	$H(\alpha-\frac{\pi}{N})\geq 0$, the quantity $Q$ is negative.
\end{proof}

\begin{lemma}\label{Ngrand}\label{48}
	Let $\Omega$ be a connected planar set which minimizes the functional $\mathcal{F}$.
	Assume that $\Omega$ has a unique optimal ball and let $N$ be the order of its rotational symmetry. 
	If $N\geq 17$, $0\leq \theta\leq \frac{\pi}{2N}$ and $\frac{3\pi}{4}\leq \alpha\leq \pi$, then $\mathcal{F}(\Omega)>0.406$.
\end{lemma}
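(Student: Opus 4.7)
The plan is to adapt the two-step strategy of Lemma~\ref{45}. First, combine the first-order optimality condition~(ii) of Proposition~\ref{optimalconditions_connected} with the area balance~(i) to obtain a clean lower bound for $\mathcal{F}(\Omega)$ in terms of $1/R_0+1/R_1$. Writing
\[
\mathcal{F}(\Omega)=\frac{\delta(\Omega)}{\lambda(\Omega)^{2}}=\frac{1}{8\lambda(\Omega)}\left(\frac{1}{R_0}+\frac{1}{R_1}\right),
\]
and noting that condition~(i) together with $\lambda(\Omega)=2N\mathcal{A}_0/\pi$ yields $\lambda(\Omega)\le 2$, one obtains
\[
\mathcal{F}(\Omega)\ge \frac{1}{16}\left(\frac{1}{R_0}+\frac{1}{R_1}\right).
\]
It thus suffices to prove $1/R_0+1/R_1>16\cdot 0.406=6.496$ throughout the specified range of $(N,\theta,\alpha)$.

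Next, I would bound the radii directly from the area identities $\mathcal{A}_0=R_0^{2}g(\alpha)-g(\theta)$ and $\mathcal{A}_1=R_1^{2}g(\alpha-\pi/N)+g(\pi/N-\theta)$ combined with $\mathcal{A}_0=\mathcal{A}_1\le \pi/N$. Since $g$ is non-decreasing on $(0,\pi)$ (as $g'(t)=2\sin^{2}t\ge 0$), the hypotheses $\alpha\ge 3\pi/4$, $\theta\le \pi/(2N)$, and the non-negativity of $g(\pi/N-\theta)$ yield
\[
R_0^{2}\le \frac{\pi/N+g(\pi/(2N))}{g(3\pi/4)},\qquad R_1^{2}\le \frac{\pi/N}{g(3\pi/4-\pi/N)},
\]
where $g(3\pi/4)=3\pi/4+1/2$.

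Both right-hand sides are decreasing in $N$ for $N\ge 17$: for $R_0^{2}$ the numerator is decreasing and the denominator is constant, and for $R_1^{2}$ the numerator is decreasing while the denominator $g(3\pi/4-\pi/N)$ is increasing toward $g(3\pi/4)$. Hence it is enough to verify the target inequality at $N=17$, where a direct numerical check (using $g(47\pi/68)\approx 2.64$) gives $R_0\le 0.255$ and $R_1\le 0.265$, whence $1/R_0+1/R_1\ge 7.70>6.496$ and $\mathcal{F}(\Omega)\ge 0.48>0.406$. No substantive obstacle arises: the only delicate point is the numerical calibration, and the hypothesis $N\ge 17$ in the statement corresponds exactly to the smallest integer for which this two-sided area argument already produces a value strictly larger than $0.406$.
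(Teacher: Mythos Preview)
Your argument is correct, and it reaches the conclusion by a route that differs from the paper's. The paper bounds $\delta(\Omega)$ directly from the perimeter formula: writing $\delta(\Omega)=\frac{N}{\pi}d(\alpha,\theta)-1$ with
\[
d(\alpha,\theta)=\sin\theta\,\frac{\alpha}{\sin\alpha}+\sin\!\left(\tfrac{\pi}{N}-\theta\right)\frac{\alpha-\tfrac{\pi}{N}}{\sin(\alpha-\tfrac{\pi}{N})},
\]
one observes $\partial d/\partial\theta\ge 0$, so $d(\alpha,\theta)\ge d(\alpha,0)$, and then minimizes over $\alpha\in[3\pi/4,\pi]$; the crude bound $\lambda(\Omega)\le 2$ then gives $\mathcal{F}(\Omega)\ge\delta(\Omega)/4$, which exceeds $0.406$ precisely from $N=17$ on. Your proof instead feeds the first-order optimality condition~(ii) back in, reducing the task to showing $1/R_0+1/R_1>6.496$, and obtains this by bounding $R_0$ and $R_1$ separately from the area identities $\mathcal{A}_0=R_0^2g(\alpha)-g(\theta)$ and $\mathcal{A}_1=R_1^2g(\alpha-\pi/N)+g(\pi/N-\theta)$ together with $\mathcal{A}_0=\mathcal{A}_1\le\pi/N$. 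The monotonicity-in-$N$ reduction to $N=17$ and the numerical check there are both sound. What your approach buys is that it avoids analyzing the two-variable function $d(\alpha,\theta)$ and its partial derivative; what the paper's approach buys is that it does not invoke condition~(ii) at all, so the same estimate $m_1$ is available later (e.g.\ in Lemma~\ref{411}) without any optimality hypothesis beyond the perimeter formula.
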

\begin{proof}
One has
$\delta=\frac{N}{\pi}d(\alpha,\theta)-1$,	
where 
\begin{equation}\label{definition_d}
	d(\alpha,\theta):=\sin(\theta)\frac{\alpha}{\sin(\alpha)}+\sin\left(\frac{\pi}{N}-\theta\right)\frac{\alpha-\frac{\pi}{N}}{\sin(\alpha-\frac{\pi}{N})}\,.
	\end{equation}
		Observe that $d(\alpha,\theta)\geq d(\alpha,0)$, since
	$\frac{\partial d}{\partial \theta}\geq 0$, and 
	$$
	d(\alpha,0)\geq \sin\left(\frac{\pi}{N}\right)\frac{\frac 34 \pi - \frac{\pi}{N}}{\sin(\frac 34 \pi - \frac{\pi}{N})}.
	$$
	By Proposition \ref{optimalconditions_connected}, and the fact that $\lambda(\Omega)\leq 2$, we have
	\begin{equation}\label{minimum1}
	\mathcal{F}(\Omega)\geq m_1:=\frac 14\left[\frac{N}{\pi}\sin\left(\frac{\pi}{N}\right)\frac{\frac 34 \pi - \frac{\pi}{N}}{\sin(\frac 34 \pi - \frac{\pi}{N})}\right],
	\end{equation}
	and $m_1>0.406$ for $N\ge 17$.
\end{proof}

\begin{lemma}\label{49}
	Let $\Omega$ be a connected planar set which minimizes the functional $\mathcal{F}$.
	Assume that $\Omega$ has a unique optimal ball and let $N$ be the order of its rotational symmetry. 
	If $N\geq 8$, $0\leq \theta\leq \frac{\pi}{2N}$ and $\frac{\pi}{2}+\frac{\pi}{N}\leq \alpha\leq  \frac{3\pi}{4}$, then $\mathcal{F}(\Omega)>0.406$.
\end{lemma}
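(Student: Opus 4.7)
The plan is to combine the first-order optimality condition (ii) of Proposition \ref{optimalconditions_connected} with the bound $\lambda(\Omega)\le 2$ to force $\mathcal{F}(\Omega)>0.406$ on the rectangle $(\theta,\alpha)\in[0,\pi/(2N)]\times[\pi/2+\pi/N,3\pi/4]$. Rewriting (ii) as $\delta/\lambda=\tfrac{1}{8}(1/R_{0}+1/R_{1})$ and using $\lambda\le 2$ gives immediately
\[
\mathcal{F}(\Omega)=\frac{\delta/\lambda}{\lambda}\ \ge\ \frac{1}{16}\left(\frac{1}{R_{0}}+\frac{1}{R_{1}}\right)\ =\ \frac{\sin(\pi/N)\,\sin(\alpha-\theta)}{16\,\sin\theta\,\sin(\pi/N-\theta)},
\]
so everything reduces to bounding this quotient from below on the rectangle.

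I would then invoke two elementary one-variable estimates. On $[0,\pi/(2N)]$, the product $\sin\theta\sin(\pi/N-\theta)$ is symmetric about $\pi/(2N)$ and concave in $\theta$ there, hence bounded above by $\sin^{2}(\pi/(2N))$. For the numerator, the given ranges force $\alpha-\theta\in[\pi/2+\pi/(2N),3\pi/4]\subset[\pi/2,3\pi/4]$, an interval on which $\sin$ is decreasing and therefore takes its minimum at $3\pi/4$, giving $\sin(\alpha-\theta)\ge\sin(3\pi/4)=\sqrt{2}/2$. Plugging both into the display above and simplifying with $\sin(\pi/N)=2\sin(\pi/(2N))\cos(\pi/(2N))$ collapses the estimate to
\[
\mathcal{F}(\Omega)\ \ge\ \frac{\sqrt{2}}{16}\,\cot\!\left(\frac{\pi}{2N}\right).
\]

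Finally, since $\cot$ is decreasing on $(0,\pi/2)$ and $\pi/(2N)$ decreases with $N$, the right-hand side is increasing in $N$, so the worst case in the range $N\ge 8$ is $N=8$. A direct computation gives $\cot(\pi/16)\approx 5.028$ and hence $\tfrac{\sqrt{2}}{16}\cot(\pi/16)\approx 0.444>0.406$, yielding the claimed inequality $\mathcal{F}(\Omega)>0.406$ for all admissible $(\theta,\alpha)$ and all $N\ge 8$. There is no real analytic obstacle: the whole argument rests on the identity for $1/R_{0}+1/R_{1}$ already used in Proposition \ref{optimalconditions_connected}, and the only things to check carefully are the sign of $\cos(\alpha-\theta)$ (to confirm $\alpha-\theta>\pi/2$ and thus pick the correct monotonicity branch of $\sin$) and the location of the maximum of $\sin\theta\sin(\pi/N-\theta)$ on the relevant interval, both of which are standard.
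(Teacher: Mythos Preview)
Your argument is correct. The identity $1/R_{0}+1/R_{1}=\sin(\pi/N)\sin(\alpha-\theta)/[\sin\theta\sin(\pi/N-\theta)]$ is exactly the one recorded in the paper just after Lemma~\ref{44}, the range $\alpha-\theta\in[\pi/2+\pi/(2N),3\pi/4]$ is correctly identified, and the numerical check at $N=8$ goes through (indeed the bound fails for $N=7$, which is consistent with the hypothesis $N\ge 8$).

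Your route is genuinely different from the paper's, however. The paper does \emph{not} invoke condition~(ii) here; instead it bounds $\delta(\Omega)$ directly from below via the auxiliary function $d(\alpha,\theta)$ (using $\partial d/\partial\theta\ge 0$ and then minimizing in $\alpha$), and bounds $\lambda(\Omega)=\frac{2N}{\pi}\mathcal{A}_0$ from above using the explicit formula $\mathcal{A}_0=\sin^2\theta\,h(\alpha)-g(\theta)$. This yields a named lower bound $m_2$ for $\mathcal{F}$. Your approach is shorter and avoids estimating $\delta$ and $\lambda$ separately, at the cost of using only the crude inequality $\lambda\le 2$. The trade-off is that the paper's bound $m_2$ is later reused in Lemma~\ref{411}, case~(ii), to control the second-order quantity $Q$ for $5\le N\le 7$; your argument, as written, does not produce such a reusable estimate. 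So while your proof of this lemma stands on its own, if one were to substitute it into the paper one would still need to manufacture an analogue of $m_2$ (or an alternative bound on $\delta/\lambda^2$) to complete Lemma~\ref{411}.
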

\begin{proof}As in the previous lemma,
	$\delta(\Omega)=\frac{N}{\pi}d(\alpha,\theta)-1$, where $d(\alpha,\theta)$ has been defined in (\ref{definition_d}) and
	$d(\alpha,\theta)\geq d(\alpha,0)\geq \frac{\pi}{2}\sin(\frac{\pi}{N})$.
	Moreover $\lambda(\Omega)\leq \frac{2N}{\pi}[\sin^2(\frac{\pi}{2N})h(\frac{3\pi}{4})-g(\frac{\pi}{2N})]$.
	Hence
	\begin{equation}\label{minimum2}
	\mathcal{F}(\Omega)=\frac{\delta(\Omega)}{\lambda^2(\Omega)}\geq m_2:=\frac{\frac{N}{2}\sin(\frac{\pi}{N})-1}{\frac{4N^2}{\pi^2}[\sin^2(\frac{\pi}{2N})h(\frac{3\pi}{4})-g(\frac{\pi}{2N})]^2},
	\end{equation}
	and $m_2>0.406$ for $N\ge 8$.
\end{proof}
\begin{rem}
	Notice that estimates (\ref{minimum1}) and (\ref{minimum2}) for  $\mathcal{F}$ hold for every $N\geq 4$. 
	However they are not sufficient to conclude for any $N\ge 3$.
\end{rem}

\begin{lemma}\label{411}
	Let $\Omega$ be a connected planar set which minimizes the functional $\mathcal{F}$.
	Assume that $\Omega$ has a unique optimal ball and let $N$ be the order of its rotational symmetry. 
	If either 
	
	(i)	$4\leq N\leq 16$, $0\leq \theta\leq \frac{\pi}{2N}$ and $\frac{3\pi}{4}\leq \alpha \leq \pi$, 

\noindent or
	
	(ii) 	$5\leq N\leq 7$, 	$0\leq \theta\leq \frac{\pi}{2N}$ and $\frac{\pi}{2}+\frac{\pi}{N}\leq \alpha\leq  \frac{3\pi}{4}$,
	
\noindent then $Q<0$, where  $Q$ is defined in Proposition \ref{optimalconditions_connected}.
	
\end{lemma}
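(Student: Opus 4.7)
The plan is to reduce the inequality $Q<0$ to an explicit inequality in $(N,\theta,\alpha)$ alone and to verify it on each of the finitely many compact parameter rectangles in the statement. First I observe that in both cases (i) and (ii) the two arguments $\alpha$ and $\alpha-\pi/N$ of $H$ lie in $[\pi/2,\pi]$: in case (i) because $\alpha\ge 3\pi/4$ and $N\ge 4$ force $\alpha-\pi/N\ge\pi/2$, and in case (ii) because $\alpha\ge\pi/2+\pi/N$ and $5\le N\le 7$. On $[\pi/2,\pi]$ the function $H(x)=\sin^3(x)/(\tan(x)-x)$ vanishes at the endpoints and is strictly negative in the interior, so the first term of $Q$ is $\le 0$ while the second is $\ge 0$. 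In contrast to Lemma~\ref{47}, where the two $H$-values had opposite signs and immediately gave $Q<0$, here both have the same (non-positive) sign, so a pure sign argument does not suffice and the negative last term $-\tfrac{32N}{\pi}\delta(\Omega)/\lambda^2(\Omega)$ must be used to close the estimate.

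Using Proposition~\ref{optimalconditions_connected}$(ii)$ together with the identity $\tfrac{1}{R_0}+\tfrac{1}{R_1}=\sin(\pi/N)\sin(\alpha-\theta)/[\sin\theta\sin(\pi/N-\theta)]$, one gets $\delta(\Omega)/\lambda(\Omega)=\sin(\pi/N)\sin(\alpha-\theta)/[8\sin\theta\sin(\pi/N-\theta)]$; combining this with the area identity $\lambda(\Omega)=\tfrac{2N}{\pi}\mathcal{A}_0=\tfrac{2N}{\pi}[\sin^2\theta\,h(\alpha)-g(\theta)]$ yields a closed-form expression for $\tfrac{32N}{\pi}\delta/\lambda^2$ depending only on $(N,\theta,\alpha)$. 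Multiplying $Q$ through by the positive factor $\sin^3\theta\,\sin^3(\pi/N-\theta)$, the inequality $Q<0$ reduces to the explicit estimate
$$
|H(\alpha-\pi/N)|\sin^3\theta-|H(\alpha)|\sin^3(\pi/N-\theta)<\frac{2\sin(\pi/N)\sin(\alpha-\theta)\,\sin^2\theta\,\sin^2(\pi/N-\theta)}{\sin^2\theta\,h(\alpha)-g(\theta)},
$$
to be tested on $[0,\pi/(2N)]\times[\alpha_-,\alpha_+]$ for each of the finitely many $N$ in the statement.

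The verification is carried out value of $N$ by value of $N$ using the monotonicity of $H$ on $[\pi/2,\pi]$: $H$ has a unique critical point $\alpha^*\in(\pi/2,2\pi/3)$, namely the root of $\tan(x)[3-\tan^2(x)]=3x$, and $|H|$ is non-increasing on $[\alpha^*,\pi]$. In case (i) one therefore has $|H(\alpha)|\le|H(3\pi/4)|=\sqrt{2}/(4+3\pi)$, while the bound for $|H(\alpha-\pi/N)|$ splits according to whether $\alpha-\pi/N$ sits above or below $\alpha^*$; case (ii) with $5\le N\le 7$ is handled in the same spirit, the relevant interval being contained in $[\alpha^*,3\pi/4]$. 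Combined with $\sin\theta\le\sin(\pi/N-\theta)$, which follows from $\theta\le\pi/(2N)$, these ingredients reduce the displayed inequality to a one-variable check on each rectangle. The main obstacle is the regime $\theta$ close to $\pi/(2N)$: there the two weights $\sin^3\theta$ and $\sin^3(\pi/N-\theta)$ become comparable while $|H(\alpha-\pi/N)|\ge|H(\alpha)|$ on the monotone branch $[\alpha^*,\pi]$, so the left-hand side of the reduced inequality can be strictly positive and one genuinely needs the right-hand side contributed by $\delta/\lambda^2$ to dominate; once this is controlled, the residual inequality becomes a routine analytic or numerical check for each of the sixteen admissible values of $N$ (thirteen from case (i) and three from case (ii)).
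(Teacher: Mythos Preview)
Your setup is sound: both $H$-values are indeed non-positive on the ranges at hand, and you correctly identify that, unlike in Lemma~\ref{47}, a pure sign argument does not close the estimate and the $\delta/\lambda^2$ term must do real work. However, the proof as written stops short of actually proving anything: after reducing $Q<0$ to an explicit two-variable inequality you announce that ``the residual inequality becomes a routine analytic or numerical check'' for each of the sixteen values of $N$, without carrying out a single one. The claimed reduction to a \emph{one-variable} check is also not justified: bounding $|H(\alpha)|$ and $|H(\alpha-\pi/N)|$ by their extremal values on the $\alpha$-interval still leaves $\theta$ and $\alpha$ coupled through $\sin(\alpha-\theta)$ and $h(\alpha)$ on the right-hand side, so it is not clear that the resulting inequality factors the way you suggest. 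In particular, near $\theta=\pi/(2N)$ and $\alpha-\pi/N$ close to the minimum of $H$, you yourself note that the left side can be positive and comparable to the right side; you then need a quantitative comparison, not a hand-wave.

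The paper takes a shorter route that avoids this difficulty. It first disposes of the ordering $H(\alpha)\le H(\alpha-\pi/N)<0$ by the obvious weight inequality $\sin^{-3}\theta\ge \sin^{-3}(\pi/N-\theta)$, which makes the first two terms of $Q$ sum to something $\le 0$. In the remaining ordering it uses the mean value theorem with the uniform derivative bound $|H'|\le 0.3$ on the relevant interval to get $-H(\alpha-\pi/N)\le 0.3\,\pi/N - H(\alpha)$, and then drops negative terms to obtain the clean upper bound
\[
Q\ \le\ \frac{0.3\,\pi}{N\sin^3(\pi/(2N))}\ -\ \frac{32N}{\pi}\,\frac{\delta(\Omega)}{\lambda^2(\Omega)}.
\]
The point is that the second term is then controlled not by your optimality-condition identity for $\delta/\lambda$ combined with the $\lambda$-formula, but simply by the numerical lower bounds $m_1$ and $m_2$ for $\mathcal{F}(\Omega)=\delta/\lambda^2$ already established in Lemmas~\ref{48} and~\ref{49} (which hold for all $N\ge 4$, even if they only yield $>0.406$ for large $N$). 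Plugging $m_1$ in case (i) and $m_2$ in case (ii) gives a single explicit inequality in $N$ alone, checked once per value of $N$. This recycling of $m_1,m_2$ is what makes the paper's argument short; your approach, even if it could be completed, rebuilds this from scratch and leaves a genuine two-parameter verification undone.
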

\begin{proof}
	We are assuming, in both cases, that $\alpha\geq \frac{\pi}{2}+\frac{\pi}{N}$. If $H(\alpha)\leq H(\alpha-\frac{\pi}{N})<0$, then $Q$ is negative, because the sum of its first two terms is negative. Therefore we can assume that $H(\alpha-\frac{\pi}{N})<H(\alpha)<0$. 
	The mean value theorem and the estimate $|H'|\leq 0.3$ on the interval $(1.973,\pi)$ imply that $-H(\alpha-\frac{\pi}{N})\leq 0.3\frac{\pi}{N}-H(\alpha)$. Therefore, dropping negative terms in the expression of $Q$, we get
	\begin{eqnarray*}
	Q&\leq& H(\alpha)\left[\frac{1}{\sin^3\theta}-\frac{1}{\sin^3(\frac{\pi}{N}-\theta)}\right]
	+\frac{0.3\pi}{N\sin^3(\frac{\pi}{2N})}-\frac{32 N}{\pi}\frac{\delta(\Omega)}{\lambda^2(\Omega)}\\
	&\leq& \frac{0.3\pi}{N\sin^3(\frac{\pi}{2N})}-\frac{32 N}{\pi}\frac{\delta(\Omega)}{\lambda^2(\Omega)}\,.
	\end{eqnarray*}
	We are now going to use estimates (\ref{minimum1}) and (\ref{minimum2}). 
	If assumption $(i)$ holds we have
		$
		Q\leq \frac{0.3\pi}{N\sin^3(\frac{\pi}{2N})}-\frac{32 N}{\pi}m_1<0
		$ for $4\leq N\leq 16$.
	If assumption $(ii)$ holds true, we have 
		$
		Q\leq \frac{0.3\pi}{N\sin^3(\frac{\pi}{2N})}-\frac{32 N}{\pi}m_2<0
		$ for $5\leq N\leq 7$.
\end{proof}

We are now left with the cases $N=2, 3$.

\begin{lemma}\label{412}
		Let $\Omega$ be a connected planar set which minimizes the functional $\mathcal{F}$.
		Assume that $\Omega$ has a unique optimal ball and let $N=3$ be the order of its rotational symmetry.
	\begin{enumerate}
		\item
		If $0\leq \theta\leq \frac{\pi}{6}$ and $\frac{\pi}{3}\leq \alpha\leq \frac{\pi}{2}$, then $\mathcal{F}(\Omega)>0.406$.
		\item
		If $0\leq \theta\leq \frac{\pi}{6}$ and $\frac{\pi}{2}\leq \alpha\leq \frac{5\pi}{6}$, then $Q<0$.
		\item
		If $0\leq \theta\leq \frac{\pi}{12}$ and $\frac{5\pi}{6}\leq \alpha\leq \pi$, then $\mathcal{A}_1>\frac{\pi}{3}$.
		\item
		If $\frac{\pi}{12}\leq \theta\leq \frac{\pi}{6}$ and $\frac{5\pi}{6}\leq \alpha\leq \pi$, then $Q<0$.
		\item
		If $\frac{\pi}{6}\leq \theta\leq \frac{\pi}{3}$ and $\frac{\pi}{3}\leq \alpha\leq \pi$, then $\mathcal{A}_0-\mathcal{A}_1>0$.
		
	\end{enumerate}
\end{lemma}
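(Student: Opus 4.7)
The plan is to establish each of the five subcases by exhibiting a violation of one of the four optimality conditions from Proposition \ref{optimalconditions_connected}. The organization mirrors the $N\ge 4$ analysis of Lemmas \ref{44}--\ref{411}, but the decomposition of $[0,\pi/3]\times[\pi/3,\pi]$ is finer because the uniform estimates used for $N\ge 4$ no longer close the inequalities at $N=3$; the five regions of the statement are designed so that on each one a single optimality condition can be contradicted by a clean geometric or analytic bound.

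Subcases (2) and (5) are direct analogues of Lemmas \ref{47} and \ref{44}. For (5), I would set $z(\theta,\alpha):=\mathcal{A}_0-\mathcal{A}_1$, check by direct differentiation that $\partial z/\partial\alpha>0$ on the rectangle, and reduce the problem to proving $Z(\theta):=z(\theta,\pi/3)\ge 0$ on $[\pi/6,\pi/3]$ using the boundary values $Z(\pi/3)=0$, $Z(\pi/6)>0$ and a single interior critical point of $Z'$; this contradicts the equality in condition (i). For (2), on $\alpha\in[\pi/2,5\pi/6]=[\pi/2,\pi/2+\pi/3]$ one has $H(\alpha)\le 0$ while $\alpha-\pi/3\in[\pi/6,\pi/2]$ gives $H(\alpha-\pi/3)\ge 0$, so the first two terms of $Q$ are nonpositive; combined with the strictly negative third term $-\frac{32N}{\pi}\delta/\lambda^2$, this forces $Q<0$, contradicting condition (iv).

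For subcase (3), contradicting condition (i) via $\mathcal{A}_1>\pi/3$, I would use the explicit expression $\mathcal{A}_1=\sin^2(\pi/3-\theta)\,h(\alpha-\pi/3)+g(\pi/3-\theta)$. Each factor is monotone in its argument, so the minimum over $[0,\pi/12]\times[5\pi/6,\pi]$ is attained at the corner $(\theta,\alpha)=(\pi/12,5\pi/6)$, where a direct computation gives $\tfrac{1}{2}\cdot\tfrac{\pi}{2}+\bigl(\tfrac{\pi}{4}-\tfrac{1}{2}\bigr)=\tfrac{\pi}{2}-\tfrac{1}{2}\approx 1.071>\pi/3\approx 1.047$, with a narrow but positive margin. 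For subcase (4), both $H(\alpha)$ and $H(\alpha-\pi/3)$ are nonpositive, so the contribution $-H(\alpha-\pi/3)/\sin^3(\pi/3-\theta)$ is positive and must be dominated; I would mimic Lemma \ref{411} by bounding $|H'|$ on $[\pi/2,\pi]$ by an explicit constant, applying the mean value theorem to $H(\alpha)-H(\alpha-\pi/3)$, and combining with a sharp lower bound for $\delta/\lambda^2$ valid on the compact subrectangle $[\pi/12,\pi/6]\times[5\pi/6,\pi]$ in order to conclude that $Q<0$.

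The main obstacle is subcase (1), where the uniform estimate of Lemma \ref{45} just misses the target $0.406$ specifically at $N=3$: combining $\frac{1}{R_0}+\frac{1}{R_1}\ge 2\cos(\pi/6)=\sqrt{3}$ with condition (ii) yields $\delta/\lambda\ge\sqrt{3}/8$, while the upper bound on $\mathcal{A}_0=\sin^2\theta\,h(\alpha)-g(\theta)$ at $\theta=\pi/6$, $\alpha=\pi/2$ gives only $\mathcal{F}\gtrsim 0.376$. Closing this gap therefore requires either a finer partition of $[0,\pi/6]\times[\pi/3,\pi/2]$ with direct numerical bounds on each piece, or first invoking condition (iv) to eliminate a subregion and then applying a sharpened form of the Lemma \ref{45} estimate on the remainder. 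Subcase (4) is the second hardest, since the signs of the two $H$ terms no longer cooperate and tight quantitative control of $H$, $H'$, and $\delta/\lambda^2$ on the relevant subrectangle is required.
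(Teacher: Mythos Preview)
Your plan is correct and aligns with the paper's own proof in every subcase: (2), (3), and (5) are handled exactly as you describe, (4) is done via the mean value bound on $H'$ combined with a lower bound on $\delta$ from $d(\alpha,\theta)$ and the crude estimate $\lambda\le 2$, and for (1) the paper takes precisely the first of your two suggested routes, splitting $[\pi/3,\pi/2]$ at $\alpha=5\pi/12$ into two subintervals (yielding $\frac{1}{R_0}+\frac{1}{R_1}\ge\sqrt{6}$ on $[5\pi/12,\pi/2]$ and $\ge\sqrt{3}$ on $[\pi/3,5\pi/12]$, paired respectively with $h(\pi/2)$ and $h(5\pi/12)$ in the $\lambda$-bound). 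Your diagnosis of where and why the uniform estimate of Lemma~\ref{45} fails at $N=3$, and of a two-piece partition as the remedy, is exactly right.
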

\begin{proof}

	(1). Assume $\frac{5\pi}{12}\leq \alpha\leq \frac{\pi}{2}$. Then
		$$
		\frac{8\delta}{\lambda}\,=\frac{1}{R_0}+\frac{1}{R_1}=\frac{\sin(\frac{\pi}{3})\sin(\alpha-\theta)}{\sin(\theta)\sin(\frac{\pi}{3}-\theta)}\geq \sqrt{6}.
		$$
		Moreover, 
		$$
		\lambda(\Omega)=\frac{6}{\pi}\mathcal{A}_0\leq \frac{6}{\pi}\left[\sin^2\left(\frac{\pi}{6}\right)h\left(\frac{\pi}{2}\right)-g\left(\frac{\pi}{6}\right)\right].
		$$
		Therefore, by Proposition \ref{optimalconditions_connected}, $\mathcal{F}(\Omega)>0.406$.
		
		Now assume $\frac{\pi}{3}\leq \alpha\leq \frac{5\pi}{12}$. Then
		$$
		\frac{8\delta}{\lambda}\,=\frac{1}{R_0}+\frac{1}{R_1}=\frac{\sin(\frac{\pi}{3})\sin(\alpha-\theta)}{\sin(\theta)\sin(\frac{\pi}{3}-\theta)}\geq \sqrt{3}\,.
		$$
Moreover, 
		$$
		\lambda(\Omega)=\frac{6}{\pi}\mathcal{A}_0\leq \frac{6}{\pi}\left[\sin^2\left(\frac{\pi}{6}\right)h\left(\frac{5\pi}{12}\right)-g\left(\frac{\pi}{6}\right)\right]\,.
		$$
		Therefore, by Proposition \ref{optimalconditions_connected}, $\mathcal{F}(\Omega)>0.406$.
		
	(2).
		With the same arguments of  the proof of Lemma \ref{zone3}, one obtains  $Q<0$.
	
	(3).
		$\mathcal{A}_1=\sin^2(\theta-\frac{\pi}{3}) h(\alpha-\frac{\pi}{3}) + g(\frac{\pi}{3}-\theta)\geq \sin^2(\frac{\pi}{3}-\frac{\pi}{12})h(\frac{5\pi}{6}-\frac{\pi}{3})+g(\frac{\pi}{3}-\frac{\pi}{12})>\frac{\pi}{3}$.
	
	(4).
		Let $d(\alpha,\theta)$ be defined by (\ref{definition_d}). As in Lemma \ref{Ngrand}, we have
		$d(\alpha,\theta)\geq d(\alpha,0)\geq 
		d(\frac{5\pi}{6},\frac{\pi}{12})$. Using that  $\lambda(\Omega)\leq 2$, one gets
		$Q\leq \frac{0.3 \pi}{3\sin^3(\pi/6)}-\frac{32\cdot 3}{\pi}\frac 14 (\frac{3}{\pi}d(\frac{5\pi}{6},\frac{\pi}{12})-1)<0$.
		
	(5). The proof is completely analogous to that one of the case $N\ge 4$.	
\end{proof}

\begin{lemma}\label{413}
		Let $\Omega$ be a connected planar set which minimizes the functional $\mathcal{F}$.
		Assume that $\Omega$ has a unique optimal ball and let $N=2$ be the order of its rotational symmetry. 
	Then $Q<0$, where $Q$ is defined in Proposition \ref{optimalconditions_connected}. 
\end{lemma}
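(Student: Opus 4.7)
The plan is to observe that for $N=2$ every summand in $Q$ already has the correct sign to make $Q$ negative, so no delicate estimate is required. The structure of the argument is essentially the one used in Lemma \ref{zone3}; what is special for $N=2$ is that the sign-only argument now covers the entire admissible range of $(\theta,\alpha)$, so no further case splitting is needed.

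More precisely, since $N=2$ one has $\pi/N=\pi/2$, and the admissible parameters lie in $\theta\in(0,\pi/2)$ and $\alpha\in(\pi/2,\pi)$. The boundary values can be excluded on a priori grounds: $\theta\in\{0,\pi/2\}$ would force $\mathcal{A}_0=0$ and hence, by condition (i) of Proposition \ref{optimalconditions_connected}, also $\mathcal{A}_1=0$, making $\Omega=B$; and $\alpha\in\{\pi/2,\pi\}$ give degenerate configurations (a ball or a cusp) that cannot be minimizers of $\mathcal{F}$.

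Next I would recall that $H(x)=\sin^3(x)/(\tan(x)-x)$ is nonnegative on $[0,\pi/2]$ and nonpositive on $[\pi/2,\pi]$, since $\tan(x)-x$ has this sign behavior on the respective intervals (with removable singularities at $0$, $\pi/2$ and $\pi$ where $H$ is defined by continuity). With $\alpha\in(\pi/2,\pi)$ this yields $H(\alpha)\le 0$, hence $H(\alpha)/\sin^3\theta\le 0$; with $\alpha-\pi/2\in(0,\pi/2)$ one has $H(\alpha-\pi/2)\ge 0$, hence $-H(\alpha-\pi/2)/\sin^3(\pi/2-\theta)\le 0$. Finally, the third summand $-\tfrac{64}{\pi}\,\delta(\Omega)/\lambda^2(\Omega)$ is strictly negative, since $\Omega$ is not a ball, so $\delta(\Omega)>0$ and $\lambda(\Omega)>0$. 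Summing three nonpositive contributions with at least one strictly negative gives $Q<0$.

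There is no genuine obstacle in this case; $N=2$ is the easiest of the whole case analysis, because for $N=2$ the angles $\alpha$ and $\alpha-\pi/N$ automatically lie on opposite sides of $\pi/2$, which is precisely the configuration that triggers the sign-only argument already used in Lemma \ref{zone3}. The only mild subtlety is the exclusion of the degenerate endpoints of the parameter range, handled as above.
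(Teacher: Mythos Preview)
Your proof is correct and takes essentially the same approach as the paper: for $N=2$ one has $\alpha\in(\pi/2,\pi)$ and $\alpha-\pi/N=\alpha-\pi/2\in(0,\pi/2)$, so $H(\alpha)\le 0$, $H(\alpha-\pi/2)>0$, and the third term is strictly negative, forcing $Q<0$. The paper's proof is just the two-line version of your argument, without the extra commentary on boundary cases.
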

\begin{proof}
	Notice that $H(\alpha)\leq 0$ for $\frac{\pi}{2}\leq \alpha<\pi$ and $H(\alpha-\pi/2)> 0$ for $\alpha<{\pi}$.
	Therefore $Q<0$.
\end{proof}

%%%%%%%%%%%%%%%%%%%%%%%%
%%%%%%%%%%%%%%%%%%%%%%%%%
%%%%%%%%%%%%%%%%%%%%%%%%%%
%%%%%%%%%%%%%%%%%%%%%%%%%%%

\subsubsection{Non-connected case}\label{non-connected-section}
Let us first remark that we can assume that the optimal domain has, in this case, only two connected components.
Indeed, let $E_1$ be the connected component which intersects the optimal ball and assume we have several other connected
components $E_2, E_3, \ldots$. First, we can assume that all these components have an empty intersection with the optimal 
ball $B$, since otherwise one can translate 
the components far away, increasing the value of $\lambda$ and keeping the value of $\delta$, by Lemma \ref{comp_conn_loin}. 
Moreover, by the isoperimetric inequality, we can obviously assume that all of these components $E_k,k\geq 2$ are balls. 
Then, we can apply the optimization
procedure explained in the proof of Proposition \ref{perimeter} for which we have seen that the optimal configuration
is composed of one or two balls. But if there are two balls, they must have different radii which is not possible
since the optimality condition (2) of Proposition \ref{thmqualit} claims that the radii outside the optimal balls
must be equal. Thus, the optimal domain must have only two connected components $E_1$, $E_2$ and $E_2$ is a ball not
intersecting $B$.

If the set $E_1$ is contained into $B$, then $E_1$ is a ball. Hence, by a direct computation
for the union of two balls, it holds $\mathcal{F}(\Omega)\ge 0.406$, and so $\Omega$ cannot be optimal.

We therefore assume that $E_1$ is not contained into $B$. In this case, the parametrization of its boundary is the same as in the connected case (see Figure \ref{thetaeta}), and we have 
$0\leq \theta\leq \frac{\pi}{N}$ and 
$\theta< \alpha\leq \pi$. 
Indeed, the condition 
$$
1< x_A+R_0=\frac{\sin(\alpha-\theta)}{\sin \alpha}+\frac{\sin \theta}{\sin \alpha},
$$
where $A$ is defined as in Figure \ref{thetaeta}, is equivalent to $\alpha > \theta$. 
Moreover we observe that 
\begin{equation}\label{R0R1-NONconn}
R_1=
\frac{\sin(\frac{\pi}{N}-\theta)}{\sin(\frac{\pi}{N}- \alpha)}, \qquad
R_0=
\frac{\sin(\theta)}{\sin(\alpha)}.
\end{equation} 
We remark that $E_2$ is a ball with radius $R_0$. 
Notice that
\begin{equation}\label{A0A1-NONconn}
\mathcal{A}_0=R_0^2 g(\alpha) - g(\theta), \qquad\mathcal{A}_1=R_1^2 g\left(\alpha-\frac{N}{\pi}\right) + g\left(\frac{N}{\pi}-\theta\right),
\end{equation}
and
$$
\lambda(\Omega)=\frac{2N\mathcal{A}_0}{\pi}+2R_0^2,\qquad
\delta(\Omega)=
\frac{N}{\pi}\left[\alpha \frac{\sin \theta}{\sin \alpha}+ \left(\alpha-\frac{N}{\pi}\right)\frac{\sin(\frac{N}{\pi}-\theta)}{\sin(\alpha - \frac{N}{\pi})}\right]+R_0-1.
$$ 
As in the above subsection, the proof will be splitted into several parts, according to the values $\theta, \alpha, N$. For every part, the contradiction will be given by the fact that $\Omega$ does not satisfy one of the conditions 
expressed in the next proposition.
\begin{prop}\label{optimalconditions_NONconnected}
		Let $\Omega$ be a non-connected planar set which minimizes the functional $\mathcal{F}$.
		Assume that $\Omega$ has a unique optimal ball $B$. 
		Let $w$ be the unique connected component which intersects $B$ and let $N$ be the order of its rotational symmetry. 
	Then the following conditions hold.
	\begin{enumerate}
		\item[(i)]
		$\mathcal{A}_0-\mathcal{A}_1+\frac{\pi}{N}R_0^2=0$, where $\mathcal{A}_0$ and $\mathcal{A}_1$ are defined in (\ref{A0A1-NONconn});
		\item[(ii)]
		$
		\frac{1}{R_0}+\frac{1}{R_1}=\frac{8\delta(\Omega)}{\lambda(\Omega)}
		$, where $R_0$ and  $R_1$ are defined in (\ref{R0R1-NONconn});
		\item[(iii)]
		$
		\mathcal{F}(\Omega)< 0.406;
		$
		\item[(iv)] 
		$
		Q\geq 0
		$, where $Q:=\frac{1}{\sin^3(\theta)}H(\alpha)-\frac{1}{\sin^3(\pi/N-\theta)}H(\alpha-\pi/N)-\frac{32 N}{\pi}\frac{\delta}{\lambda^2}$, $H(x)=\frac{\sin^3(x)}{\tan(x)-x}$;
		\item[(v)]
		$
		\Phi(\alpha)\geq 0
		$, where $\Phi(\alpha):=NR_0[1-\alpha \cot(\alpha)][\cot(\alpha)-\frac{N}{\pi}(1-\alpha \cot(\alpha))]$.
	\end{enumerate}
\end{prop}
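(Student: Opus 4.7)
The plan is to imitate the argument of Proposition~\ref{optimalconditions_connected}, with the modifications forced by the presence of the isolated ball $E_2$. By Proposition~\ref{thmqualit}(2), $E_2$ lies in the same connected component of $\mathbb{R}^2\setminus\partial B$ as the outer arcs of $w$, and therefore has the common outer radius $R_0$. Statement (i) is then a direct area count: since $E_2\cap B=\varnothing$, we have $|w|=|B|+N(\mathcal{A}_0-\mathcal{A}_1)$; adding $|E_2|=\pi R_0^2$ and equating to $|\Omega|=\pi$ yields $\mathcal{A}_0-\mathcal{A}_1+\frac{\pi}{N}R_0^2=0$. Statement (iii) is immediate, since by global minimality and Theorem~\ref{thmAFN} one has $\mathcal{F}(\Omega)\le 0.405585\ldots<0.406$.

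For (ii) I would combine the shape derivative of $\lambda$ from Proposition~\ref{shdlambda} with $dP(\Omega;V)=\int_{\partial\Omega}HV\cdot n\,ds$ inside the first-order stationarity relation $d(\delta/\lambda^2;V)=\mu\,d(\mathrm{Area};V)$. Constant curvature $1/R_0$ on $\Gamma_0\cup\partial E_2$ and $1/R_1$ on $\Gamma_1$, together with the arbitrariness of $V$, produce the two scalar equations $\frac{1}{R_0}=4\frac{\delta}{\lambda}+2\pi\mu\lambda^2$ and $\frac{1}{R_1}=4\frac{\delta}{\lambda}-2\pi\mu\lambda^2$, whose sum is the claimed identity. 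A vector field localized on $\partial E_2$ merely reproduces the first of these equations, so no further first-order condition is imposed by $E_2$; the new information it carries will emerge only at second order, in (v).

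Condition (iv) is obtained by a verbatim repetition of the perturbation used in the proof of Proposition~\ref{optimalconditions_connected}(iv): deform only $w$, by $\alpha\mapsto\alpha+\varepsilon_0$ on the outer arcs and $\alpha\mapsto\alpha-\varepsilon_1(\varepsilon_0)$ on the inner arcs, with $\varepsilon_1=a\varepsilon_0+b\varepsilon_0^2$ tuned so that $|w|$ is preserved. The ball $E_2$ is not touched, so the Taylor expansions of $\delta$ and $\lambda$ are formally identical to those of the connected case; condition (ii) kills the linear term in $\varepsilon_0$, and nonnegativity of the $\varepsilon_0^2$ coefficient is exactly $Q\ge 0$.

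The genuinely new condition (v) encodes the coupling between $E_2$ and $w$, and is where the main work lies. The natural variation is to rescale $E_2$ by $R_0\mapsto R_0+\varepsilon$ and to compensate by $\alpha\mapsto\alpha+\varepsilon_2(\varepsilon)$ on the outer arcs of $w$ so that $|\Omega|=\pi$ is preserved; using the identity $\partial\mathcal{A}_0/\partial\alpha=2R_0^2(1-\alpha\cot\alpha)$ already exploited in (iv), the area constraint yields $\varepsilon_2=\varepsilon/[NR_0(1-\alpha\cot\alpha)]+O(\varepsilon^2)$. Expanding $\mathcal{F}(\Omega_\varepsilon)$ to order $\varepsilon^2$, condition (ii) cancels the $\varepsilon$-term and eliminates the Lagrange multiplier from the $\varepsilon^2$-coefficient; after collecting the three second-order contributions (the quadratic correction to $|E_2|$, the quadratic piece of $\varepsilon_2$, and the quadratic corrections to the outer perimeter and area of $w$), the surviving expression factors as $NR_0(1-\alpha\cot\alpha)\bigl[\cot\alpha-\tfrac{N}{\pi}(1-\alpha\cot\alpha)\bigr]=\Phi(\alpha)$. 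The main obstacle is precisely this algebraic simplification, which mirrors but is slightly more intricate than the one leading to $Q$, because one must simultaneously track the variation of the shared parameter $R_0$ in $E_2$ and in the outer arc radius $\sin\theta/\sin\alpha$ of $w$.
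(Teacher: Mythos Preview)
Your proposal is correct and follows essentially the paper's approach: (i)–(iv) are direct adaptations of Proposition~\ref{optimalconditions_connected}, and for (v) the paper uses exactly the one-parameter family you describe, trading area between the outer arcs of $w$ and the isolated ball $E_2$ under the volume constraint—only parametrizing by $\varepsilon=\Delta\alpha$ rather than by $\Delta R_0$. One simplification worth noting (implicit in the paper, missed in your sketch) is that this perturbation also leaves $\lambda$ fixed, so the second-order condition reduces to $\Delta^{(2)}P\ge 0$ alone and the first-order term in fact cancels automatically rather than via (ii); your linearization $\varepsilon_2=\varepsilon/[NR_0(1-\alpha\cot\alpha)]$ is also off by a factor $-\pi$, but neither point affects the argument.
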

\begin{proof}
	Statements in $(i)$-$(iv)$ are analogous to those of the connected case (see Proposition  \ref{optimalconditions_connected}). 
	Let us prove statement $(v)$.
	We consider the following perturbation of $\Omega$: we reduce the area of the second component $E_2$, increasing that one of $E_1\setminus B$ in order to keep the total area  equal to $\pi$. Therefore 
	$\alpha$ is modified into $\alpha+\varepsilon$ and $R_0$ into $R_0^\varepsilon$. Observe that $R_1$ keeps unchanged.
	Now, 
	$$
	R_0^\varepsilon=\frac{\sin \theta}{\sin \alpha}+\varepsilon \cos \alpha-\frac{\varepsilon^2}{\sin \alpha}=R_0[1-\varepsilon \cot \alpha +\varepsilon^2(1/2+\cot^2 \alpha)],
	$$
	and
	$$
	(R_0^\varepsilon)^2=R_0^2[1-2 \varepsilon \cot \alpha +\varepsilon^2(1+3\cot^2 \alpha)].
	$$
	Moreover $\mathcal{A}_0^\varepsilon=(R_0^\varepsilon)^2g(\alpha+\varepsilon)-g(\theta)$ and $g(\alpha+\varepsilon)=g(\alpha)+\varepsilon (1-\cos(2\alpha))+\varepsilon^2\sin(2\alpha)$.
	Therefore 
	$$
	\mathcal{A}_0^\varepsilon=\mathcal{A}_0+\varepsilon R_0^2 2(1-\alpha \cot \alpha)+\varepsilon^2 R_0^2 [\alpha+3\alpha \cot^2\alpha -3\cot \alpha].
	$$
	Since we are keeping the total area unchanged, we have $\pi R_0^2-N (\mathcal{A}_0^\varepsilon-\mathcal{A}_0)=\pi (R_0^\varepsilon)^2$
	which gives 
	$$
	R_0^\varepsilon=R_0-\frac 12 \frac{N (\mathcal{A}_0^\varepsilon-\mathcal{A}_0)}{\pi R_0}-\frac 18 \frac{N^2 (\mathcal{A}_0^\varepsilon-\mathcal{A}_0)^2}{\pi R_0^3}, 
	$$
	and hence
	$$
	2\pi(R_0^\varepsilon-R_0)=-N\left[2\varepsilon(1-\alpha \cot \alpha)+\varepsilon^2((\alpha+3\alpha \cot^2\alpha -3\cot \alpha)+\frac{N}{\pi}(1-\alpha \cot \alpha)^2)\right].
	$$
	The variation of the perimeter equals to
	\begin{eqnarray*}
	&&2N(\alpha +\varepsilon)R_0\left[1-\varepsilon \cot \alpha +\varepsilon^2(1/2+\cot^2 \alpha)\right] - 2N\alpha R_0 +\\
	&&-N\left[2\varepsilon(1-\alpha \cot \alpha)+\varepsilon^2(\alpha+3\alpha \cot^2\alpha -3\cot \alpha+\frac{N}{\pi}(1-\alpha \cot \alpha)^2)\right].
	\end{eqnarray*}
	Observe that the previous expression is a second order polynomial in $\varepsilon$. 
	In particular the terms in $\varepsilon$ vanish according to (ii). The terms in $\varepsilon^2$ equal to 
	$NR_0\Phi(\alpha)=NR_0[1-\alpha \cot(\alpha)][\cot(\alpha)-\frac{N}{\pi}(1-\alpha \cot(\alpha))]$
	and must be positive by the optimality of the set $\Omega$.
\end{proof}

\begin{lemma}\label{propalpha}\label{415}
		Let $\Omega$ be a non-connected planar set which minimizes the functional $\mathcal{F}$.
		Assume that $\Omega$ has a unique optimal ball $B$. 
		Let $w$ be the unique connected component which intersects $B$ and let $N\ge 2$ be the order of its rotational symmetry. 
	 Hence the function $\Phi(\alpha)$  defined in Proposition \ref{optimalconditions_NONconnected} has a zero $\alpha(N)>\frac{\pi}{N}$ such that  $\Phi(\alpha)<0$ for $\alpha(N)<  \alpha \leq \pi$.
	In particular $\alpha(2)\approx 1.22$ and
	$\alpha(N)<1$ for $N\geq 6$.
\end{lemma}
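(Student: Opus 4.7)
The plan is to decompose $\Phi$ using its explicit factorization. Since $NR_0>0$, the sign of $\Phi(\alpha)$ coincides with that of the product $A(\alpha)B(\alpha)$, where
$$A(\alpha):=1-\alpha\cot\alpha,\qquad B(\alpha):=\cot\alpha-\frac{N}{\pi}\bigl(1-\alpha\cot\alpha\bigr).$$
I would first establish $A>0$ on $(0,\pi)$: one has $A(0^+)=0$ (because $\alpha\cot\alpha\to 1$) and
$$A'(\alpha)=\frac{\alpha-\sin\alpha\cos\alpha}{\sin^2\alpha}=\frac{g(\alpha)}{\sin^2\alpha}>0,$$
with $g$ the positive function of \eqref{defn_fonctions_gh} (since $g(0)=0$ and $g'(\alpha)=2\sin^2\alpha>0$). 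Consequently the sign of $\Phi$ on $(0,\pi)$ is the sign of $B$.

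Next I would show that $B$ is strictly decreasing from $+\infty$ to $-\infty$. Rewriting $B(\alpha)=\cot\alpha\,(1+N\alpha/\pi)-N/\pi$ and differentiating gives the clean identity
$$\sin^2(\alpha)\,B'(\alpha)=-\Bigl(1+\frac{N\alpha}{\pi}\Bigr)+\frac{N}{\pi}\sin\alpha\cos\alpha=-1-\frac{N}{\pi}g(\alpha)<0,$$
so $B$ is strictly decreasing, and the limits $B(0^+)=+\infty$, $B(\pi^-)=-\infty$ yield a unique zero $\alpha(N)\in(0,\pi)$ with $B(\alpha)<0$ exactly on $(\alpha(N),\pi)$. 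Combined with the first step this gives $\Phi(\alpha)<0$ precisely on $(\alpha(N),\pi]$, which is the main conclusion of the lemma.

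To compare $\alpha(N)$ with $\pi/N$ I would evaluate $B(\pi/N)=2\cot(\pi/N)-N/\pi$; for $N\ge 3$ this is positive (the case $N=3$ is a direct check, and for $N\ge 4$ the elementary estimate $\cot(x)\ge 1/x-x/3$ on $(0,\pi/3]$ gives $B(\pi/N)\ge N/\pi-2\pi/(3N)>0$), so the monotonicity of $B$ forces $\alpha(N)>\pi/N$. The two numerical assertions are then short checks: when $N=2$, direct evaluation gives $B(1.22)>0>B(1.23)$, so $\alpha(2)\approx 1.22$; when $N\ge 6$, the quantity $B(1)=\cot(1)+(N/\pi)(\cot(1)-1)$ is strictly decreasing in $N$ (because $\cot(1)<1$) and is already negative at $N=6$, hence $\alpha(N)<1$ by monotonicity. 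The only computation requiring attention is the derivative identity for $B$; the rest reduces to monotonicity together with a handful of numerical evaluations at isolated points, so there is no real obstacle beyond clean case-bookkeeping.
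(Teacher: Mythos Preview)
The paper states this lemma without proof, so there is no argument to compare against; your factorization approach is the natural one and is essentially complete.

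There is, however, one slip: the inequality $\cot(x)\ge 1/x-x/3$ that you invoke for $N\ge 4$ goes the wrong way. The Laurent expansion is $\cot x=\tfrac{1}{x}-\tfrac{x}{3}-\tfrac{x^3}{45}-\cdots$, so in fact $\cot x<\tfrac{1}{x}-\tfrac{x}{3}$ on $(0,\pi)$, and your lower bound for $B(\pi/N)$ is not justified as written. The conclusion $B(\pi/N)>0$ for $N\ge 3$ is nonetheless correct and is easily salvaged: what you really need is $\cot(\pi/N)>\tfrac{N}{2\pi}$, i.e.\ $\cot x>\tfrac{1}{2x}$ for $x\in(0,\pi/3]$. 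Since $\sin x<x$ gives $\csc^2 x>1/x^2>1/(2x^2)$, the function $x\mapsto\cot x-\tfrac{1}{2x}$ is strictly decreasing, and it is positive at $x=\pi/3$ (where $\tfrac{1}{\sqrt3}>\tfrac{3}{2\pi}$), hence positive on all of $(0,\pi/3]$.

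One further remark: you were right to restrict the claim $\alpha(N)>\pi/N$ to $N\ge 3$. For $N=2$ the lemma's own numerical value $\alpha(2)\approx 1.22$ is below $\pi/2$, and indeed $B(\pi/2)=-2/\pi<0$; the inequality $\alpha(N)>\pi/N$ is only used in the paper for $N\ge 3$ (Lemma~\ref{417} and Figure~\ref{fig-intervalli}(b)), so this is a harmless imprecision in the statement rather than a defect in your argument.
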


\begin{lemma}\label{416}
		Let $\Omega$ be a non-connected planar set which minimizes the functional $\mathcal{F}$.
		Assume that $\Omega$ has a unique optimal ball $B$. 
		Let $w$ be the unique connected component which intersects $B$ and let $N$ be the order of its rotational symmetry. 
		
	If $N\ge 3$, $\frac{\pi}{2N}\leq \theta \leq \frac{\pi}{N}$ and $\theta\leq \alpha\leq \pi$, then $\mathcal{A}_0-\mathcal{A}_1 +\frac{\pi}{N}R_0^2>0$.
\end{lemma}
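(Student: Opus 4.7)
The strategy is to adapt the proof of Lemma \ref{44} to the non-connected setting, with an additional term from the ball component $E_2$. Define
\[
F(\theta,\alpha) := \mathcal{A}_0 - \mathcal{A}_1 + \frac{\pi}{N} R_0^2.
\]
Using (\ref{R0R1-NONconn}), (\ref{A0A1-NONconn}) together with $\sin^2(x) h(x) = g(x)$ and the fact that $g$ and $h$ are odd, one rewrites
\[
F(\theta,\alpha) = \sin^2(\theta)\, h(\alpha) - g(\theta) - \sin^2\bigl(\tfrac{\pi}{N}-\theta\bigr) h\bigl(\alpha-\tfrac{\pi}{N}\bigr) - g\bigl(\tfrac{\pi}{N}-\theta\bigr) + \frac{\pi \sin^2(\theta)}{N\sin^2(\alpha)}.
\]
The goal is to show $F > 0$ throughout the stated region, which contradicts condition~(i) of Proposition \ref{optimalconditions_NONconnected} and thereby rules out this parameter range.

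The first step is to evaluate $F$ at the lower endpoint $\alpha = \theta$. Using $\sin^2(\theta) h(\theta) = g(\theta)$ and the oddness of $h$ (which gives $\sin^2(\pi/N-\theta) h(\theta-\pi/N) = -g(\pi/N - \theta)$), the five terms telescope and a direct computation yields $F(\theta,\theta) = \pi/N > 0$. This provides the base estimate.

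The second step is to establish monotonicity in $\alpha$, namely $\partial_\alpha F(\theta,\alpha) \ge 0$ on $[\theta,\pi]$. Differentiating gives
\[
\partial_\alpha F = \sin^2(\theta)\left[h'(\alpha) - \frac{2\pi \cos(\alpha)}{N\sin^3(\alpha)}\right] - \sin^2\bigl(\tfrac{\pi}{N}-\theta\bigr)\, h'\bigl(\alpha-\tfrac{\pi}{N}\bigr),
\]
where $h'(x) = 2(\sin x - x\cos x)/\sin^3 x$ is positive on $(0,\pi)$. Because $\theta\ge \pi/(2N)$, one has $\sin(\theta) \ge \sin(\pi/N - \theta)$, and this inequality should allow the first $h'$ contribution to dominate the second via a comparison of $h'(\alpha)$ and $h'(\alpha - \pi/N)$. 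For $\alpha\ge \pi/2$ the correction term $-2\pi\cos(\alpha)/(N\sin^3\alpha)$ is nonnegative and the claim follows immediately; for $\alpha < \pi/2$ it is a lower-order perturbation that is controlled by the same $\sin(\theta) \geq \sin(\pi/N-\theta)$ bound. Combining the two steps gives $F(\theta,\alpha) \geq F(\theta,\theta) = \pi/N > 0$, as required.

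The main obstacle is the second step: in the subregion $\alpha \in (\theta, \pi/2)$, the correction $-2\pi\cos(\alpha)/(N\sin^3\alpha)$ is negative and can partially cancel $h'(\alpha)$, so the sign of $\partial_\alpha F$ is not transparent. The verification reduces to the elementary but delicate inequality
\[
\frac{\sin^2(\theta)\bigl[\sin(\alpha) - (\alpha+\pi/N)\cos(\alpha)\bigr]}{\sin^3(\alpha)} \;\geq\; \frac{\sin^2(\pi/N-\theta)\bigl[\sin(\alpha-\pi/N) - (\alpha-\pi/N)\cos(\alpha-\pi/N)\bigr]}{\sin^3(\alpha-\pi/N)},
\]
which may need a case split on whether $\alpha$ lies above or below $\pi/2$, exploiting $\sin(\theta) \geq \sin(\pi/N - \theta)$ together with monotonicity properties of $x \mapsto \sin(x) - x\cos(x)$. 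A short direct verification then closes the argument.
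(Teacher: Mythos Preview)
Your overall plan (evaluate at $\alpha=\theta$, then push monotonicity in $\alpha$) is the right instinct, but the monotonicity claim $\partial_\alpha F\ge 0$ on $[\theta,\pi]$ is \emph{false} as stated, and this is a genuine gap rather than a cosmetic one. Take $N=3$ and $\theta=\pi/6=\pi/(2N)$. At $\alpha=\theta$ the two $h'$ contributions cancel exactly (since $h'$ is even and $\sin(\theta)=\sin(\pi/N-\theta)$), while the derivative of the extra term $\tfrac{\pi}{N}R_0^2=\tfrac{\pi\sin^2\theta}{N\sin^2\alpha}$ equals $-\tfrac{2\pi}{N}\cot\theta=-\tfrac{2\pi\sqrt{3}}{3}<0$. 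Hence $\partial_\alpha F(\pi/6,\pi/6)<0$, and $F$ is strictly decreasing near $\alpha=\theta$. Equivalently, the ``elementary but delicate inequality'' you wrote down fails at this point: the left-hand side numerator $\sin\alpha-(\alpha+\pi/N)\cos\alpha$ is negative there (it equals $\tfrac12-\tfrac{\pi}{2}\cdot\tfrac{\sqrt{3}}{2}<0$), while the right-hand side is positive. So the ``short direct verification'' cannot close the argument.

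The source of the trouble is that you folded the term $\tfrac{\pi}{N}R_0^2$ into the quantity whose monotonicity you study. Its $\alpha$-derivative, $-\tfrac{2\pi\sin^2\theta\cos\alpha}{N\sin^3\alpha}$, is not a lower-order perturbation for small $\alpha$; it blows up like $\alpha^{-3}$ and overwhelms the difference $h'(\alpha)-h'(\alpha-\pi/N)$. The paper avoids this entirely by working with $z(\theta,\alpha):=\mathcal{A}_0-\mathcal{A}_1$ \emph{without} the $R_0^2$ term. One then splits: for $\alpha\ge\pi/N$ the connected-case argument (Lemma~\ref{44}) already gives $z>0$; for $\theta<\alpha\le\pi/N$ one checks $\partial_\alpha z=\sin^2\theta\,h'(\alpha)-\sin^2(\tfrac{\pi}{N}-\theta)\,h'(\tfrac{\pi}{N}-\alpha)\ge 0$ (here $\sin\theta\ge\sin(\tfrac{\pi}{N}-\theta)$ and $h'$ increasing, with $\alpha\ge\tfrac{\pi}{N}-\alpha$) and $z(\theta,\theta)=0$. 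In both cases $z\ge 0$, and adding the strictly positive $\tfrac{\pi}{N}R_0^2$ at the end gives the desired strict inequality. Reorganising your proof along these lines fixes the gap.
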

\begin{proof}
	We divide the proof into two parts, according to the values of $\alpha$.
		If $\frac{\pi}{N}\leq \alpha\leq \pi$, one proves, as in the connected case, that  $\mathcal{A}_0-\mathcal{A}_1>0$.
		If $\theta< \alpha\leq \frac{\pi}{N}$ we have
		$$\mathcal{A}_0-\mathcal{A}_1={\sin^2 \theta}h(\alpha)-g(\theta) + {\sin^2 \left(\frac{\pi}{N}-\theta\right)}h\left(\frac{\pi}{N}-\alpha\right)-g\left(\frac{\pi}{N}-\theta\right)\,.$$ Let us set $z(\theta,\alpha)$   the right hand of the last equality. The derivative with respect to $\alpha$ of $z$ is positive. Therefore
		$\mathcal{A}_0-\mathcal{A}_1> z(\theta,\theta)=0$.
\end{proof}

\begin{lemma}\label{417}
		Let $\Omega$ be a non-connected planar set which minimizes the functional $\mathcal{F}$.
		Assume that $\Omega$ has a unique optimal ball $B$. 
		Let $w$ be the unique connected component which intersects $B$ and let $N$ be the order of its rotational symmetry. 
		
If $N\geq 3$, $0\leq \theta\leq \frac{\pi}{2N}$ and $\frac{\pi}{N}\leq \alpha\leq \alpha(N)$, then $\mathcal{F}(\Omega)>0.406$.
\end{lemma}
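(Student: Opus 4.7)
The strategy is to adapt the proof of Lemma~\ref{45} (its connected analogue) to the non-connected setting, exploiting the first-order optimality condition (ii) of Proposition~\ref{optimalconditions_NONconnected} together with the area constraint (i) to obtain a sharp upper bound on $\lambda(\Omega)$.

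First, from condition (ii) of Proposition~\ref{optimalconditions_NONconnected} and the trigonometric identity
$$\frac{1}{R_0}+\frac{1}{R_1}=\frac{\sin(\pi/N)\,\sin(\alpha-\theta)}{\sin\theta\,\sin(\pi/N-\theta)},$$
one obtains exactly as in Lemma~\ref{45} that $\delta(\Omega)/\lambda(\Omega)\geq\cos(\pi/(2N))/4$ on the rectangle $[0,\pi/(2N)]\times[\pi/N,\alpha(N)]$. Indeed, since Lemma~\ref{415} guarantees $\alpha(N)<\pi/2+\pi/(2N)$, the factor $\sin(\alpha-\theta)$ is monotone in each variable, so the ratio above is minimized at the corner $(\theta,\alpha)=(\pi/(2N),\pi/N)$, where it equals $\sin(\pi/N)/\sin(\pi/(2N))=2\cos(\pi/(2N))$.

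Second, I use the area constraint (i) to simplify the expression of $\lambda$. Substituting $\mathcal{A}_0=\mathcal{A}_1-\frac{\pi}{N}R_0^2$ into $\lambda=\frac{2N\mathcal{A}_0}{\pi}+2R_0^2$, the ball contribution cancels exactly and one obtains
$$\lambda(\Omega)=\frac{2N\mathcal{A}_1}{\pi}=\frac{2N}{\pi}\bigl[\sin^2(\pi/N-\theta)\,h(\alpha-\pi/N)+g(\pi/N-\theta)\bigr],$$
with the same functional form as in the connected case. The function $\mathcal{A}_1$ is decreasing in $\theta$ and increasing in $\alpha$; a crude upper bound would take the corner $(\theta,\alpha)=(0,\alpha(N))$, but this is not attained since the area constraint (i) forces $\theta\geq\theta^\ast(\alpha)>0$, where $\theta^\ast(\alpha)$ solves the implicit equation obtained by substituting the parametrization into (i). Using this admissible lower bound on $\theta$ yields a substantially sharper upper bound on $\mathcal{A}_1$.

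Combining the two estimates gives
$$\mathcal{F}(\Omega)=\frac{\delta/\lambda}{\lambda}\geq\frac{\pi\cos(\pi/(2N))}{8N\,\mathcal{A}_1(\theta^\ast(\alpha),\alpha)},$$
and I conclude by a case-by-case verification over $N\geq 3$, using the approximate values of $\alpha(N)$ provided by Lemma~\ref{propalpha}, that the right-hand side exceeds $0.406$. The main obstacle is the tightness of the bound for small $N$, notably $N=3$: here the naive corner estimate fails, and one must explicitly solve the constraint equation at the endpoints $\alpha=\pi/N$ (where $\mathcal{A}_1$ reduces to $g(\pi/N-\theta^\ast)$) and $\alpha=\alpha(N)$, then verify that the bound holds along the full admissible portion of the rectangle. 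The remaining computations are routine but delicate, requiring the numerical value of $\alpha(N)$ and monotonicity of the relevant one-variable functions.
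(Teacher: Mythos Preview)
Your outline shares its opening move with the paper---both use condition (ii) to get $\delta/\lambda\ge\cos(\pi/(2N))/4$ from the trigonometric identity for $1/R_0+1/R_1$---but thereafter the two approaches diverge. You exploit the area constraint (i) to collapse $\lambda=\frac{2N}{\pi}\mathcal A_0+2R_0^2$ to the exact expression $\lambda=\frac{2N}{\pi}\mathcal A_1$, and then propose restricting to the one-dimensional constraint curve $\theta=\theta^\ast(\alpha)$ to bound $\mathcal A_1$. The paper never makes this simplification; instead it bounds $\mathcal A_0$ and $R_0^2$ separately over rectangular sub-regions, and for small $N$ (namely $N=3,4,5,6$) it further splits the $\theta$-interval at $\pi/(2N+2)$, obtaining a sharper lower bound on $1/R_0+1/R_1$ on the left piece and a direct lower bound on $\delta$ (not on $\delta/\lambda$) on the right piece. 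Your use of $\lambda=\frac{2N}{\pi}\mathcal A_1$ is genuinely cleaner and, in principle, should suffice.

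However, your proof has a real gap at exactly the point you flag as ``routine but delicate.'' You correctly observe that the corner estimate at $(\theta,\alpha)=(0,\alpha(N))$ fails for $N=3$, and you propose to rescue it via the implicit bound $\theta\ge\theta^\ast(\alpha)$. But you neither compute $\theta^\ast$ nor supply any quantitative estimate showing $\mathcal A_1(\theta^\ast(\alpha),\alpha)$ is small enough; you merely assert that solving the constraint at the endpoints and invoking monotonicity will work. This is precisely the hard part: for $N=3$ one needs $\mathcal A_1<0.279$ along the whole constraint curve, and checking this requires either an explicit estimate on $\theta^\ast(\alpha)$ uniform in $\alpha\in[\pi/3,\alpha(3)]$ or a monotonicity argument for $\alpha\mapsto\mathcal A_1(\theta^\ast(\alpha),\alpha)$, neither of which you provide. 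The paper sidesteps this difficulty by its two-piece subdivision of $[0,\pi/(2N)]$, which makes all the estimates explicit. As written, your argument is a promising sketch rather than a proof; to complete it you must actually carry out the verification for $N=3,4,5,6$.
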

\begin{proof}
The proof is composed by several steps.

\emph{Step 1.}
		Assume that  $\frac{\pi}{N}\leq \alpha\leq 1$ and $N\geq 10$. By Proposition \ref{optimalconditions_NONconnected}
		$$
		\frac{8\delta(\Omega)}{\lambda(\Omega)}=\frac{1}{R_0}+\frac{1}{R_1}\geq 2\cos\left(\frac{\pi}{2N}\right).
		$$ 
		Moreover
		$$
		\lambda(\Omega)=\frac{2N}{\pi}\mathcal{A}_0+2R_0^2\leq 
		\left[\sin^2 \left(\frac{\pi}{2N}\right) h(1)-\frac{\pi}{2N}+\frac 12 \sin\left(\frac{\pi}{2N}\right)\right]
		\frac{2N}{\pi}+\frac{1}{2\cos^2(\frac{\pi}{2N})},
		$$
		and hence $\mathcal{F}(\Omega)>0.406$.
		
\emph{Step 2.}		
		Assume that  $\frac{\pi}{N}\leq \alpha\leq \frac{\pi}{N-1}$ and $N=7, 8, 9$. By Proposition \ref{optimalconditions_NONconnected},
		$$
		\frac{8\delta(\Omega)}{\lambda(\Omega)}=\frac{1}{R_0}+\frac{1}{R_1}\geq 2\cos\left(\frac{\pi}{2N}\right).
		$$
		Moreover, 
		\begin{eqnarray*}
		\lambda(\Omega) &=&\frac{2N}{\pi}\mathcal{A}_0+2R_0^2\\
		&\le&
		\left[\sin^2 \left(\frac{\pi}{2N}\right) h\left(\frac{\pi}{N-1}\right)-\frac{\pi}{2N}+\frac 12 \sin\left(\frac{\pi}{2N}\right)\right]\frac{2N}{\pi}+\frac{1}{2\cos^2(\frac{\pi}{2N})},
		\end{eqnarray*}
		and hence $\mathcal{F}(\Omega)>0.406$.

\emph{Step 3.}
		Assume that  $\frac{\pi}{N-1}\leq \alpha\leq 1$ and $N=7, 8, 9$. By Proposition \ref{optimalconditions_NONconnected},
		$$
		\frac{8\delta(\Omega)}{\lambda(\Omega)}=\frac{1}{R_0}+\frac{1}{R_1}\geq 2\cot\left(\frac{\pi}{2N}\right)\sin\left(\frac{\pi}{N-1}-\frac{\pi}{2N}\right).
		$$
		Moreover, 
		\begin{eqnarray*}
		\lambda(\Omega) &=&\frac{2N}{\pi}\mathcal{A}_0+2R_0^2\\
		&\le& 
		\left[\sin^2 \left(\frac{\pi}{2N}\right) h\left(\frac{\pi}{N-1}\right)-\frac{\pi}{2N}+\frac 12 \sin\left(\frac{\pi}{2N}\right)\right]
		\frac{2N}{\pi}+\frac{1}{2\cos^2(\frac{\pi}{2N})},
	\end{eqnarray*}
		therefore
		$\mathcal{F}(\Omega)>0.406$.
		
\emph{Step 4.}
		Assume that  $\frac{\pi}{N}\leq \alpha\leq \alpha(N)$, $0\leq \theta\leq \frac{\pi}{2N+2}$ and $N=3, 4, 5, 6$. By Proposition \ref{optimalconditions_NONconnected},
		$$
		\frac{8\delta(\Omega)}{\lambda(\Omega)}=\frac{1}{R_0}+\frac{1}{R_1}\geq \frac{\sin(\frac{\pi}{N})}{\sin(\frac{\pi}{2N+2})}.
		$$ 
		Moreover, 
		\begin{eqnarray*}
		\lambda(\Omega) &=&\frac{2N}{\pi}\mathcal{A}_0+2R_0^2\\
		&\le& 
		\left[\sin^2 \left(\frac{\pi}{2N+2}\right) h\left(\frac{\pi}{2N+2}\right)-g\left(\frac{\pi}{2N+2}\right)\right]
		\frac{2N}{\pi}+2\frac{\sin^2\left(\frac{\pi}{2N+2}\right)}{\sin^2\left(\frac{\pi}{N}\right)},
	\end{eqnarray*}
		therefore
		$\mathcal{F}(\Omega)>0.406$.
		
\emph{Step 5.}
		Assume that  $\frac{\pi}{N}\leq \alpha\leq \alpha(N)$, $\frac{\pi}{2N+2}\leq \theta\leq \frac{\pi}{2N}$ and $N=3, 4, 5, 6$. Then
		$$
		\delta(\Omega)\geq \frac{N}{\pi}\left[\sin\left(\frac{\pi}{2N+2}\right)\frac{\frac{\pi}{N}}{\sin\left(\frac{\pi}{N}\right)}+\sin\left(\frac{\pi}{N}-\frac{\pi}{2N+2}\right)\right]+\frac{\sin\left(\frac{\pi}{2N+2}\right)}{\sin(\alpha(N))}-1.
		$$ 
		Moreover, 
		$$
		\lambda(\Omega)=\frac{2N}{\pi}\mathcal{A}_0+2R_0^2
		\leq 
		\left[\sin^2 \left(\frac{\pi}{2N}\right) h\left(\alpha(N)\right)-g\left(\frac{\pi}{2N}\right)\right]\frac{2N}{\pi}+\frac{2\sin^2(\frac{\pi}{2N})}{\sin^2(\frac{\pi}{N})},
		$$
		therefore
		$\mathcal{F}(\Omega)>0.406$.
\end{proof}

\begin{lemma}\label{418}
		Let $\Omega$ be a non-connected planar set which minimizes the functional $\mathcal{F}$.
		Assume that $\Omega$ has a unique optimal ball $B$. 
		Let $w$ be the unique connected component which intersects $B$ and let $N$ be the order of its rotational symmetry. 
	
	If $N\geq 3$, $0\leq \theta\leq \frac{\pi}{2N}$ and {$\theta\leq \alpha\leq \frac{\pi}{N}$}, then $\mathcal{F}(\Omega)>0.406$.
\end{lemma}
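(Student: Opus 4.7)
The plan is to combine the first-order optimality condition (ii) of Proposition~\ref{optimalconditions_NONconnected} with the area constraint (i). Since $|\Omega|=\pi$ forces $N\mathcal{A}_0+\pi R_0^2 = N\mathcal{A}_1$, we obtain the simpler expression $\lambda(\Omega) = (2N/\pi)\mathcal{A}_1$. Together with (ii) this gives the working identity
\[
	\mathcal{F}(\Omega) = \frac{1}{8\lambda}\left(\frac{1}{R_0}+\frac{1}{R_1}\right) = \frac{\pi}{16N\mathcal{A}_1}\left(\frac{1}{R_0}+\frac{1}{R_1}\right),
\]
which I will use to bound $\mathcal{F}$ from below by separately bounding the two factors.

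Next I would estimate each factor. Setting $s=\alpha-\theta\in[0,\pi/N-\theta]$ and using the sine addition formulas,
\[
	\frac{1}{R_0}+\frac{1}{R_1} = 2\cos s+\left[\cot\theta-\cot\!\left(\tfrac{\pi}{N}-\theta\right)\right]\sin s.
\]
Because $\theta\le \pi/(2N)\le \pi/N-\theta$, the bracket is nonnegative, and a one-variable analysis shows the minimum of this quantity on $[0,\pi/N-\theta]$ is attained at an endpoint, giving
\[
	\frac{1}{R_0}+\frac{1}{R_1}\ge \min\!\left(2,\,\frac{\sin(\pi/N)}{\sin\theta}\right)\ge 2\cos\!\left(\tfrac{\pi}{2N}\right).
\]
From \eqref{A0A1-NONconn}, using that $g$ is odd, we rewrite $\mathcal{A}_1=\sin^2(\pi/N-\theta)\big[h(\pi/N-\theta)-h(\pi/N-\alpha)\big]$, which is bounded above by $g(\pi/N-\theta)\le g(\pi/N)$ since $\alpha\le \pi/N$ implies $h(\pi/N-\alpha)\ge 0$. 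Combining these two bounds yields the master estimate
\[
	\mathcal{F}(\Omega)\ge \frac{\pi\cos(\pi/(2N))}{8N\,g(\pi/N-\theta)}.
\]

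For $N\ge 5$ a direct numerical evaluation (using $g(\pi/N-\theta)\le g(\pi/N)$) shows that the right-hand side already exceeds $0.406$ uniformly in $\theta$, closing those cases. For $N=3$ and $N=4$ the master estimate is too weak when $\theta$ is small, and a more delicate analysis is needed. My plan is to split the admissible $\theta$-range into two sub-intervals: when $\theta$ is bounded away from $0$, the factor $g(\pi/N-\theta)$ is strictly smaller than $g(\pi/N)$ and the master estimate already gives $\mathcal{F}>0.406$; when $\theta$ is small, the area equation forces the admissible $\alpha=\alpha(\theta)$ to lie close to $\pi/N$, so $1/R_0=\sin\alpha/\sin\theta$ becomes large and sharpens the first factor enough to compensate. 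In the limit $\theta\to 0$ the set $\Omega$ converges to a ball and Corollary~\ref{corfond} provides $\liminf \mathcal{F}(\Omega)\ge \pi/(8(4-\pi))\approx 0.458>0.406$. The main obstacle is this final step: for $N=3$ and $N=4$ one must quantify how fast $\alpha(\theta)\to \pi/N$ as $\theta\to 0$ and interpolate uniformly between the two regimes, in the spirit of Steps~4--5 of the proof of Lemma~\ref{417}; this sub-case analysis is expected to constitute the bulk of the argument.
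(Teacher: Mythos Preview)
Your identity $\mathcal{F}(\Omega)=\dfrac{\pi}{16N\mathcal{A}_1}\Bigl(\dfrac{1}{R_0}+\dfrac{1}{R_1}\Bigr)$, obtained by combining (i) and (ii), together with the bounds $\mathcal{A}_1\le g(\pi/N-\theta)$ and $\tfrac{1}{R_0}+\tfrac{1}{R_1}\ge 2\cos(\pi/(2N))$, is correct and disposes of $N\ge 5$ cleanly. This is a different organisation from the paper's, which estimates $\delta$ and $\lambda$ directly in terms of $R_0$ and splits cases on whether $R_0$ exceeds the threshold $-\tfrac{4}{3}b_N$ (with $b_N=\tfrac{N}{\pi}\sin\tfrac{\pi}{N}-1<0$); your route exploits condition~(i) earlier and is arguably tidier for large $N$.

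For $N=3,4$ there is a genuine gap. First, your asymptotic claim is wrong: as $\theta\to 0$ the area constraint $\mathcal{A}_0+\tfrac{\pi}{N}R_0^2=\mathcal{A}_1$ forces $\mathcal{A}_1\to 0$, and since $\mathcal{A}_1=\sin^2(\tfrac{\pi}{N}-\theta)\bigl[h(\tfrac{\pi}{N}-\theta)-h(\tfrac{\pi}{N}-\alpha)\bigr]$ with $h$ strictly increasing, this forces $\alpha\to 0$, \emph{not} $\alpha\to\pi/N$. (A balance of leading terms in fact gives $\alpha\sim\theta^{2/3}$; the conclusion $1/R_0\to\infty$ does survive, but for a different reason than the one you state.) Second, and more importantly, the quantitative interpolation between the ``$\theta$ bounded away from $0$'' and ``$\theta\to 0$'' regimes is never carried out: Corollary~\ref{corfond} is only a limiting statement, and turning it into a uniform bound over the full $\theta$-interval is precisely where the work lies. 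The paper sidesteps this entirely by splitting on $R_0$ rather than on $\theta$: when $R_0\ge -\tfrac{4}{3}b_N$ it uses the direct estimates $\delta\ge b_N+R_0$ and $\lambda\le 2R_0^2\bigl(1+\tfrac{N}{\pi}g(\tfrac{\pi}{N})\bigr)$, with a further refinement of the perimeter bound for $N=3,4$; when $R_0< -\tfrac{4}{3}b_N$ it combines condition~(ii) with the elementary inequality $\sin(\alpha-\theta)\ge \sin\theta\cos\theta\,(1/R_0-1)$. This $R_0$-based split is finite and requires no asymptotic analysis.
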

\begin{proof}
	Let us estimate $\delta(\Omega)$ and $\la(\Omega)$; for every $N$ we have
	\begin{eqnarray*}
	\delta(\Omega) &\geq& \frac{N}{\pi}\left[\theta+\sin\left(\frac{\pi}{N}-\theta\right)\right]+R_0-1\geq\frac{N}{\pi}\sin\left(\frac{\pi}{N}\right)+R_0-1,\\
	\lambda(\Omega) &=& \frac{2N}{\pi}\mathcal{A}_0+2R_0^2\leq \frac{2N}{\pi}\left[\frac{\sin^2 \theta}{\sin^2 \alpha}g(\alpha)-g(\theta)\right]+2R_0^2\leq \frac{2N}{\pi}R_0^2 g\left(\frac{\pi}{N}\right)+2R_0^2\,.
\end{eqnarray*}
	Let $a_N=1+\frac{N}{\pi}g(\frac{\pi}{N})$,  $b_N=\frac{N}{\pi}\sin(\frac{\pi}{N})-1$. One has
	\begin{equation}\label{a}
	\lambda(\Omega)\leq 2R_0^2a_N, 
	\end{equation}
	and hence
	$$
	\mathcal{F}(\Omega)=\frac{\delta(\Omega)}{\lambda^2(\Omega)}\geq \frac{b_N+R_0}{4R_0^4 a_N^2}.
	$$ 
	Notice that the function 
	$$
	x\mapsto \frac{b_N+x}{4x^4 a_N^2},
	$$ 
	attains its maximum at $x=-\frac 43 b_N$. Therefore
	$\mathcal{F}(\Omega)>0.406$ if $R_0\geq -\frac{4}{3}b_N$ and $N\geq 5$.  	
	
	\noindent
	Now, let $N=3,4$ and $R_0\geq -\frac{4}{3}b_N$. We divide the analysis in two parts, according to the values of $\alpha$.

	Assume that $\alpha<\frac{\pi}{N}$.  We can write $\delta(\Omega)=\frac{N}{\pi}d(\alpha, \theta)+R_0-1$, where $d$ has been defined in (\ref{definition_d}). We observe that $\frac{\partial d}{\partial \alpha}<0$. Therefore
	$$
	d(\alpha,\theta)\geq d\left(\frac{\pi}{2N},\theta\right) \geq \frac{\pi}{N}\cos\left(\frac{\pi}{2N}\right).
	$$
By estimate (\ref{a}) of  $\lambda(\Omega)$ we get
	$$
	\mathcal{F}(\Omega)\geq \frac{\cos(\frac{\pi}{2N})-1+R_0}{4R_0^4a_N^2}.
	$$
	Notice that the right hand side is a decreasing function of $R_0$ on $[-\frac 43 b_N, \frac{\sqrt{2}}{2}]$ and its value at $R_0=\frac{\sqrt{2}}{2}$ is greater than $0.406$.
	
	Assume now that $\alpha\geq \frac{\pi}{N}$. We have $\frac{\partial d}{\partial \theta}\geq 0$. Since we are studying $R_0\geq -\frac{4}{3}b_N$, we are reduced to compute the minimum of $d(\arcsin(\frac{\sin \theta}{-\frac 43 b_N}),\theta)$. The minimum of this function is equal to $0.9918$ if $N=3$ and $0.7630$ if $N=4$. Therefore
	$$
	\delta(\Omega)
	\geq 
	-\frac 43 b_N-1+\frac{N}{\pi}\cdot
	\left\{
	\begin{array}{ll}
	0.9918, & N=3,
	\\
	0.7630, & N=4.
	\end{array}
	\right.
	$$
	We are now going to estimate $\lambda(\Omega)$. Since $R_0\geq -\frac 43 b_N$ and $\alpha\geq \frac{\pi}{2N}$, then $\theta\geq \widehat{\theta}_N$, where $\widehat{\theta}_N$ is defined by 
	$$
	\frac{\sin \widehat{\theta}_N}{\sin \frac{\pi}{2N}}=-\frac 43 b_N.
	$$
	 Therefore 
	 $$
	 \lambda(\Omega)=\frac{2N}{\pi}\mathcal{A}_1\leq \frac{2N}{\pi}g\left(\frac{\pi}{N}-\hat{\theta}_N\right).
	 $$
	One can easily verify that $\mathcal{F}(\Omega)>0.406$.

	We are now going to study the case $R_0\leq -\frac 43 b_N$.
	Let $b<1$ and assume that $R_0<b$. Then $\sin(\alpha-\theta)\geq \sin \theta \cos \theta (\frac 1b-1)$. Therefore 
	\begin{equation}\label{b}
	8\frac{\delta(\Omega)}{\lambda(\Omega)}\geq \frac{\sin(\frac{\pi}{N})\sin(\alpha-\theta)}{\sin \theta \sin(\frac{\pi}{N}-\theta)}\geq \frac 1b-1\qquad\forall\,N.
	\end{equation}
	For $b=-\frac 43 b_N$, one has $\mathcal{F}(\Omega)>0.406$, by estimate (\ref{a}); that is, for $N\geq 3$, if $R_0\leq -\frac 43 b_N$, then $\frac{\delta(\Omega)}{\lambda^2(\Omega)}>0.406$.
\end{proof}
% % %

\begin{lemma}\label{419}
	 Let $\Omega$ be a non-connected planar set which minimizes the functional $\mathcal{F}$.
	 Assume that $\Omega$ has a unique optimal ball $B$. 
	 Let $w$ be the unique connected component which intersects $B$ and let $N=2$ be the order of its rotational symmetry. 
	\begin{enumerate}
		\item[(1)]
		If $\alpha>\alpha(2)$, then $\Phi(\alpha)<0$, where $\Phi(\cdot)$ is defined in Proposition \ref{optimalconditions_NONconnected}.
		\item[(2)]
		If $\frac{\pi}{4}\leq \theta\leq \frac{\pi}{2}$ and $\theta\leq \alpha\leq \frac{\pi}{2}$, then $\mathcal{A}_0-\mathcal{A}_1+\frac{\pi}{2}R_0^2>0$.
		\item[(3)]
		If $\alpha\leq \frac{\pi}{6}$, then $\mathcal{F}(\Omega)>0.406$.
		\item[(4)]
		If $0\leq \theta\leq \frac{\pi}{4}$,
		$\frac{\pi}{6}\leq \alpha\leq \alpha(2)$ and $R_0\leq 0.45$, then $\mathcal{A}_0-\mathcal{A}_1+\frac{\pi}{2}R_0^2<0$.
		\item[(5)]
		If $0\leq \theta\leq \frac{\pi}{4}$, $\frac{\pi}{6}\leq \alpha\leq \alpha(2)$ and 
		$R_0\geq 0.45$, then $\mathcal{F}(\Omega)>0.406$.
	\end{enumerate}
\end{lemma}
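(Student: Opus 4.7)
The plan is to rule out the possibility of a unique optimal ball when $N=2$ in the non-connected case, by showing that no point $(\theta,\alpha)$ in the admissible region $\{0\leq\theta\leq\pi/2,\ \theta\leq\alpha\leq\pi\}$ can satisfy all five necessary conditions collected in Proposition \ref{optimalconditions_NONconnected}. The five statements of the present lemma partition this region into subregions, and in each subregion I will violate at least one of these conditions.

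Statement (1) is a direct corollary of Lemma \ref{propalpha} specialized to $N=2$: there $\alpha(2)\approx 1.22$ was identified as the unique zero of $\Phi$ past which $\Phi$ stays negative, which is precisely the claim. Statements (2) and (4) both concern the sign of
\[
z(\theta,\alpha):=\mathcal{A}_0-\mathcal{A}_1+\frac{\pi}{2}R_0^2,
\]
whose vanishing is required by condition (i). After substituting the closed forms $R_0=\sin\theta/\sin\alpha$, $R_1=\cos\theta/\cos\alpha$ (valid for $N=2$) and using that $g$ is odd, I would apply a monotonicity argument in one of the variables, analogous to the one in Lemma \ref{416}, reducing the verification to a one-variable inequality on the boundary of the rectangle. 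For (2), the lower bound $R_0\geq\sin\theta\geq\sqrt{2}/2$ makes the term $(\pi/2)R_0^2$ dominate; for (4), the complementary bound $R_0\leq 0.45$ shrinks it enough that $\mathcal{A}_1$ dominates.

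For statements (3) and (5), which give $\mathcal{F}(\Omega)>0.406$, I will follow the scheme of Lemma \ref{418}: combine the optimality equation (ii), $1/R_0+1/R_1=8\delta/\lambda$, with a universal upper estimate $\lambda\leq 2R_0^2 a_2$ where $a_2=1+(2/\pi)g(\pi/2)$, to produce a lower bound on $\mathcal{F}$ of the form $(b_2+R_0)/(4R_0^4 a_2^2)$. In (3), the constraint $\alpha\leq\pi/6$ forces $1/R_0+1/R_1$ to be large and hence $\delta$ to be large; in (5), the hypothesis $R_0\geq 0.45$ together with $\alpha\leq\alpha(2)$ allows an endpoint check of the above rational function on the compact interval $R_0\in[0.45,\sqrt{2}/2]$.

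The hard part will be the tandem of (4) and (5): together they cover the full two-dimensional subrectangle $\{\theta\leq\pi/4,\ \pi/6\leq\alpha\leq\alpha(2)\}$, split by the numerical threshold $R_0=0.45$. Making the two cases fit together cleanly requires that the threshold be chosen sharply so that for $R_0$ just above it the lower bound on $\mathcal{F}$ is still above $0.406$, while for $R_0$ just below it the sign of $z$ is still the expected one. Verifying this precise tuning, in the spirit of the final paragraph of the proof of Lemma \ref{418}, is the most delicate point, but reduces to routine monotonicity and interval calculations once the right auxiliary functions have been identified.
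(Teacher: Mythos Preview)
Your partition of the $(\theta,\alpha)$-region and your treatment of (1), (2), and (4) are essentially those of the paper. The genuine gap is in (3) and especially (5), where you propose to recycle the bound
\[
\mathcal{F}(\Omega)\ \geq\ \frac{b_2+R_0}{4R_0^4 a_2^{2}},\qquad a_2=1+\tfrac{2}{\pi}g\!\left(\tfrac{\pi}{2}\right)=2,\quad b_2=\tfrac{2}{\pi}-1,
\]
from Lemma~\ref{418}. This function equals $(R_0-0.3634)/(16R_0^4)$; its maximum on $(0,1)$ is attained near $R_0\approx 0.485$ and is approximately $0.14$. It therefore \emph{never} reaches $0.406$, so an ``endpoint check on $[0.45,\sqrt{2}/2]$'' cannot succeed, and the same crude bound is equally insufficient for (3). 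The threshold $0.45$ is not tuned for that estimate at all.

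What the paper does instead is to exploit the extra angular information in each subregion to sharpen \emph{both} the lower bound on $\delta$ and the upper bound on $\lambda$. For (3), one first splits at $R_0=0.367$: for $R_0<0.367$ one uses the estimate $8\delta/\lambda\geq 1/R_0-1$ together with $\lambda\leq 2R_0^2 a_2$; for $R_0\geq 0.367$ one uses that $d(\alpha,\theta)$ (defined in (\ref{definition_d})) is decreasing in $\alpha$ on this range, so $d\geq d(\pi/6,\theta)\geq (\pi/3)/\sin(\pi/3)\approx 1.21$, which is substantially better than the generic $d\geq 1$ behind $b_2$, and one bounds $\lambda$ using $g(\alpha)\leq g(\pi/6)$. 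For (5), $\delta$ is bounded below by using the \emph{concavity} of $d$ in $\theta$ (so its minimum on the $\theta$-interval determined by $0.45\leq R_0\leq \sqrt{2}/2$ is attained at an endpoint, giving $d\gtrsim 1.417$), while $\lambda$ is bounded \emph{not} through $\mathcal{A}_0+R_0^2$ but through $\mathcal{A}_1$, via an auxiliary function $L(\alpha,\theta)$ monotone in $\theta$, yielding $\lambda\lesssim 0.708$. These two sharper inputs are what push $\mathcal{F}$ above $0.406$; the generic Lemma~\ref{418} machinery you invoke is simply too weak for $N=2$.
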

\begin{proof}
	$(1).$ This  follows from Proposition \ref{propalpha}.
	
$(2).$ 	The proof is analogous to that of the case $N\ge 3$, see Lemma \ref{416}.
	
$(3).$
		Let $\theta\leq \alpha\leq \frac{\pi}{2}$ and
		let $R_0<0.3670$. One can use estimates (\ref{a}) and (\ref{b})  with $b=0.3670$ to prove that
		$\frac{\delta}{\lambda^2}>0.406$. 
		We now assume that $R_0\geq 0.3670$.
		Let us estimate $\delta(\Omega)$: using the function $d$ defined in (\ref{definition_d}) we get
		$$
		\delta(\Omega)=\frac{2}{\pi}d(\alpha,\theta)+R_0-1
		\geq \frac{2}{\pi}d\left(\frac{\pi}{6},\theta\right)+R_0-1\geq \frac{2}{\pi}\frac{\frac{\pi}{3}}{\sin\frac{\pi}{3}}+R_0-1,
		$$
		because the derivative of $d$ with respect to $\alpha$ is negative.
		On the other hand, 
		$$
		\lambda(\Omega)=\frac{4}{\pi}\mathcal{A}_0+2R_0^2\leq \frac{4}{\pi}R_0^2 g\left(\frac{\pi}{6}\right) +2 R_0^2,
		$$ 
		therefore
		$$
		\frac{\delta(\Omega)}{\lambda^2(\Omega)}\geq \frac{\frac{2}{\pi}\frac{\frac{\pi}{3}}{\sin\frac{\pi}{3}}+R_0-1}{\frac{4}{\pi}R_0^2 g(\frac{\pi}{6}) +2 R_0^2}.
		$$
		The function on the right hand side being decreasing with respect to $R_0$, if $R_0\in (0.3670,\frac{\sqrt{2}}{2})$, it holds
		$$
		\mathcal{F}(\Omega)=\frac{\delta}{\lambda^2}\geq \frac{\frac{2}{\pi}\frac{\frac{\pi}{3}}{\sin\frac{\pi}{3}}+\frac{\sqrt{2}}{2}-1}{\frac{2}{\pi} g(\frac{\pi}{6}) +1}>0.406.
		$$
		
	$(4).$
		We are going to prove that $\mathcal{A}_0-\mathcal{A}_1+\frac{\pi}{2}R_0^2<0$, that is, 
		$$
		G(\alpha,\theta):=\frac{\sin^2 \theta}{\sin^2 \alpha}\left[g(\alpha)+\frac{\pi}{2}\right]+\sin(2\theta)+\cos^2(\theta) h\left(\frac{\pi}{2}-\alpha\right)<\frac{\pi}{2}.
		$$
		Since $\frac{\partial G}{\partial \theta}>0$, we have
		$$
		G(\theta,\alpha)\leq G(\arcsin(0.45 \sin \alpha),\alpha)\leq
		\max_{\alpha\in [\frac{\pi}{6},\alpha(2)]}G(\arcsin(0.45 \sin \alpha),\alpha) <\frac{\pi}{2}.
		$$

$(5).$ We can write $\delta$ in the following way:
		$$
		\delta(\Omega)=\frac{2}{\pi}d(\alpha,\theta)+R_0-1,
		$$
		where 
		$d$ has been defined in (\ref{definition_d}). Observe that $d$ is concave with respect to $\theta$. Therefore
		\begin{eqnarray*}
		\min d(\theta,\alpha)&=&
		\min_\alpha\{
		d(\alpha,\arcsin(0.45\sin \alpha)), d(\alpha,\arcsin(\sqrt{2}/2\sin \alpha))
		\}\\
		&\approx& 1.4169.
		\end{eqnarray*}
		On the other hand,
		$$
		\lambda(\Omega)=\frac{4}{\pi}\mathcal{A}_1=\frac{4}{\pi}L(\alpha,\theta),
		$$
		where 
		$$
		L(\alpha,\theta)=g\left(\frac{\pi}{2}-\theta\right)-\frac{\cos^2 \theta}{\cos^2 \alpha}g\left(\frac{\pi}{2}-\theta\right).
		$$
		Now, it is easy to see that $L$ is decreasing with respect to $\theta$. Therefore $L(\alpha,\theta)\leq L(\alpha,\arcsin(0.45 \sin\alpha))$. This implies that
		$$
		\lambda(\Omega)\leq \frac{4}{\pi}\max_{\alpha}L(\alpha,\arcsin(0.45 \sin\alpha))\approx 0.7081.
		$$
		The estimates above on $\delta(\Omega)$ and $\lambda(\Omega)$ entail $\mathcal{F}(\Omega)=\frac{\delta(\Omega)}{\lambda^2(\Omega)}>0.406.$
\end{proof}

%%%%%%%%%%%%%%%%%%%%%%%%%%%%%%%%%%
%%%%%%%%%%%%%%%%%%%%%%%%%%%%%%%%%%%
%%%%%%%%%%%%%%%%%%%%%%%%%%%%%%%%%%%%%%%%%

\section{Conjecture on the optimal set}\label{section-conjecture}
In this section, we describe  a set that we conjecture to be  optimal for $\mathcal{F}$. 
This conjecture would follow from these  two  properties:
\begin{conj}\label{conj1}
(i) The optimal set $\Omega_0$ is connected and has two perpendicular axes of symmetry.

\noindent (2) The optimal set has exactly two optimal balls $B_1$ and $B_2$ realizing the Fraenkel asymmetry.
\end{conj}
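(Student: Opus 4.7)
The plan is to attack the three claims in sequence: connectedness, the pair of perpendicular axes of symmetry, and finally the exact count of optimal balls. For connectedness, I would enhance Proposition \ref{perimeter} using property (2) of Proposition \ref{thmqualit}: the boundary arcs of $\Omega_0$ outside every optimal ball share the same radius $R_0$, so any connected component of $\Omega_0$ meeting no optimal ball would be a closed $C^{1,1}$ curve of constant curvature $1/R_0$, hence a disk. By Lemma \ref{comp_conn_loin} such extra disks may be translated arbitrarily far without increasing $\mathcal{F}$, and the explicit minimization over their number and radii carried out in the proof of Proposition \ref{perimeter} then applies. A careful numerical comparison, combined with the non-connected case analysis of Subsection \ref{non-connected-section}, should show that every multi-component candidate has $\mathcal{F}$ strictly above the mask value $\approx 0.3931$ and is therefore not optimal.

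For the two perpendicular axes of symmetry, let $B_1$ and $B_2$ be distinct optimal balls (provided by Section \ref{subsection_number_optimal_balls}), and let $\Pi$ be the perpendicular bisector of the segment joining their centers. The reflected set $\sigma_\Pi(\Omega_0)$ has the same perimeter and area (hence the same $\delta$) and admits $B_1, B_2$ as optimal balls with their roles exchanged, giving the same $\lambda$; therefore $\sigma_\Pi(\Omega_0)$ is again a minimizer of $\mathcal{F}$. If one can establish uniqueness of the minimizer up to rigid motion, this yields $\sigma_\Pi(\Omega_0) = \Omega_0$ after translation, producing the first axis; a Steiner-type argument in the direction along $\overline{c_1c_2}$ should then yield the perpendicular axis. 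Given these two axes, the exact-count claim follows by combining Corollary \ref{corcentre} with the fact that $\Omega_0$ has at least two optimal balls. Indeed, the uniqueness clause of Corollary \ref{corcentre} would force a single optimal ball centered at $\Pi_1\cap\Pi_2$ unless $\partial \Omega_0$ contains a segment parallel to a symmetry direction; ruling out three or more optimal balls then reduces to a finite case analysis, using that each optimal ball must share the same pair of radii $(R_0,R_1)$ and match the $C^{1,1}$ boundary at its intersection points, in the spirit of Section \ref{subsection_number_optimal_balls}.

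The main obstacle is unquestionably the uniqueness step in the symmetry argument. Proving uniqueness of the minimizer up to rigid motion is morally equivalent to the conjecture itself and appears out of reach of the current techniques. A plausible workaround, in line with the paper's rearrangement philosophy, would be to construct a symmetrization that \emph{strictly} decreases $\mathcal{F}$ for any non-symmetric set, in contrast to the rearrangement of Definition \ref{rearrangement}, which is shown by Proposition \ref{prop_simmetrizzazione} to decrease $\mathcal{F}$ only asymptotically near the ball. Extending such a rearrangement to the full non-infinitesimal regime, while preserving strict control of the Fraenkel asymmetry under the operation, seems to be the central open problem standing between the results of this paper and the conjecture.
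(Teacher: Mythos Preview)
The statement you are attempting to prove is labeled \emph{Conjecture} in the paper, not a theorem; the paper offers no proof of it. Immediately after stating it, the authors write ``Once these properties (which seem to be difficult) are proved, the problem becomes finite dimensional'', making explicit that (i) and (2) are open. So there is no paper-side argument to compare your proposal against: your write-up is a sketch of how one might \emph{try} to settle an open problem, and you yourself flag the decisive gap.

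That said, a few of your intermediate steps deserve scrutiny beyond the uniqueness issue you already highlight. For connectedness, your appeal to Subsection~\ref{non-connected-section} is misplaced: that entire analysis is carried out under the standing hypothesis of a \emph{single} optimal ball, and its role is to derive a contradiction with that hypothesis. It says nothing about multi-component competitors once two or more optimal balls are present, which is exactly the regime you must handle. Your ``careful numerical comparison'' would have to be built from scratch in that regime. For the exact count of optimal balls, invoking Corollary~\ref{corcentre} is problematic: that result requires convexity in both directions perpendicular to the symmetry axes, and the conjectured mask $\textsf{M}_0$ is not convex (Proposition~\ref{thmqualit}(4)); in fact the inward-bending arcs $\gamma_1$ make it non-convex in the vertical direction. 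So the uniqueness clause of Corollary~\ref{corcentre} does not apply to the very object you want to reach, and the argument collapses before the ``finite case analysis'' you allude to.

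Your diagnosis of the core difficulty is accurate: the symmetry step hinges on uniqueness of the minimizer up to rigid motion, and establishing that (or a strictly-decreasing symmetrization valid far from the ball) is precisely what the paper leaves open.
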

Once these properties (which seem to be difficult) are proved, the problem becomes  finite dimensional. 
More precisely, the optimal set belongs to a class of sets  named \emph{masks} in  \cite{CiLe2}
(see Figure \ref{optimal-domain}). 
Three parameters are sufficient to describe the family of masks $\textsf{M}$ in competition. For that purpose, we will use the $C^1$ regularity of the optimal domain which allows us to consider only $C^1$ competitors. 
Moreover the volume constraint allows us to get rid of one of these parameters, leading to a simple unconstrained
optimization problem in two variables. We point out that the solution of this  minimization problem is a non convex domain $\textsf{M}_0$ such that $\mathcal{F}(\textsf{M}_0)<
\inf_{\Omega\in\mathcal{C}} \mathcal{F}(\Omega)=0.405585\,,
$
where $\mathcal{C}$ be the class of planar convex sets (see Theorem \ref{thmAFN}).

\medskip
We recall that by Proposition \ref{thmqualit}, statement (2), the boundary of the optimal domain is composed
of arcs of circle, the radius of each arc is the same in any connected component of the complementary
of the union of boundaries of the two optimal balls. 
This holds true since  small variations of
the boundary far from the optimal balls do not change these balls (and then the Fraenkel asymmetry).
Therefore, the problem is locally equivalent to minimizing the perimeter with a volume constraint.
Thanks to the symmetry assumption, a mask is thus composed of 8 arcs of circle with three different radii.
\begin{figure}[h!]
\centering
\includegraphics[width=12cm]{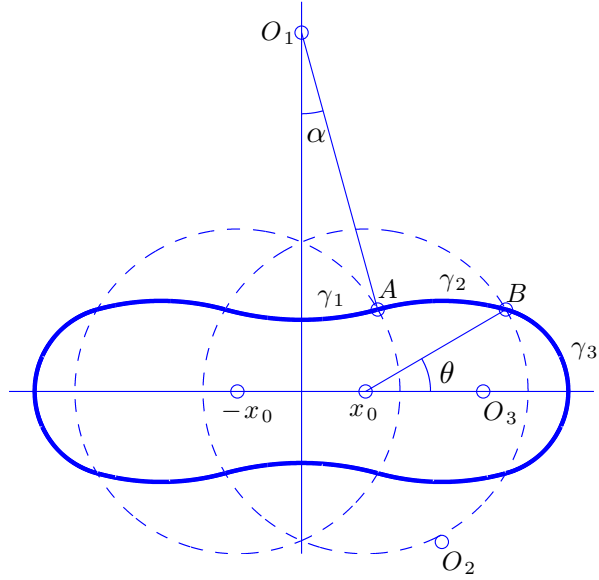}
\caption{Parametrization of a mask $\textsf{M}$ with $\alpha,\theta,x_0$}\label{optimal-domain}
\end{figure}

\medskip
Let us fix the notations (see Figure \ref{optimal-domain}).
We will explain later what are the parameters that we use to completely describe
the sets. We choose to work with sets of area $\pi$ in an orthonormal frame centered at $O$. 
The two optimal (unit) balls $B_1$
and $B_2$ are respectively centered at $P_1=(x_0,0)$ and $P_2=(-x_0,0)$. By symmetry, it suffices to describe the boundary
of $\textsf{M}$ in  $\{x\geq 0, y\geq 0\}$. In this quadrant, the boundary is composed of three
arcs of circle:
\begin{itemize}
\item an arc $\gamma_1$ of center $O_1=(0,y_1)$ and radius $R_1$ in the intersection of the two optimal balls,
\item an arc $\gamma_2$ of center $O_2=(x_2,y_2)$ and radius $R_2$ inside the ball $B_1$ and outside the ball $B_2$,
\item an arc $\gamma_3$ of center $O_3=(x_3,0)$ and radius $R_3$ outside the two balls.
\end{itemize}
We also introduce the intersection points of the boundary of $\textsf{M}$ with the boundary of the optimal balls
in the upper half-plane, $A=(x_A,y_A)$ and $B=(x_B,y_B)$ being in the first quadrant and $A'=(-x_A,y_A),B'=(-x_B,y_B)$
their symmetric with respect to the $y$ axis:
$$\partial\textsf{M}\cap\partial B_1= \{B,A'\} \quad\mbox{and}\quad \partial\textsf{M}\cap\partial B_2= \{A,B'\}.$$
Moreover, by Corollary \ref{cdt_optimalite} these points have same height: $y_A=y_B$.
We finally introduce the angles $\alpha=(\overrightarrow{O_1O},\overrightarrow{O_1A})$ and 
$\theta=(\overrightarrow{P_1O_3},\overrightarrow{P_1B})$.

\medskip
The $C^1$ regularity of $\partial\textsf{M}$  implies that $O_1, A, O_2$ and $O_2,O_3,B$ are on the same line. By elementary trigonometric calculus the following relations hold:
$$
R_1=\frac{\cos\theta -x_0}{\sin\alpha},\quad R_2=\frac{x_0}{\sin\alpha},\quad R_3=\frac{\sin\theta}{\cos\alpha}\,,
$$
$$
O_1=\left(0,\frac{\cos(\theta-\alpha)-x_0\cos(\alpha)}{\sin\alpha}\right),\; O_2=\left(\cos\theta,\sin\theta -
\frac{x_0\cos\alpha}{\sin\alpha}\right),\; O_3=\left(x_0+\frac{\cos(\alpha+\theta)}{\cos(\alpha}\right)\,,
$$
that is, all these quantities can be expressed in term of the three
 parameters $\alpha,\theta, x_0$.
 
With all these formulae in hand, it becomes easy to compute the perimeter $P(\textsf{M})$ of $\textsf{M}$, its area 
$A(\textsf{M})$ and its Fraenkel asymmetry $\lambda(\textsf{M})$. 
More precisely we have:
\begin{equation}\label{perop}
P(\textsf{M})=4\left(\frac{\alpha (x_0+\cos\theta)}{\sin\alpha} - \frac{\alpha\sin\theta}{\cos\alpha} +
\frac{\pi\sin\theta}{2\cos\alpha}\right),
\end{equation}
and, by definition, $\delta(\textsf{M})=P(\textsf{M})/2\pi -1$,
\begin{equation}\label{aireop}
A(\textsf{M})=4\left(\frac{x_0^2-\cos^2\theta}{2}\,h(\alpha)+x_0(\sin\theta+h(\alpha) \cos\theta)+\cos\theta\sin\theta 
+\frac{\sin^2\theta}{2}\,h\left(\frac{\pi}{2}\,-\alpha\right)\right),
\end{equation}
where  $h$ is defined in (\ref{defn_fonctions_gh}), and finally
\begin{equation}\label{lambdop}
\lambda(\textsf{M})=2-\frac{4}{\pi}\left(2x_0 h(\alpha) \cos\theta +\theta +\cos\theta\sin\theta -h(\alpha) \cos^2\theta \right).
\end{equation}
Notice that  the boundary of an optimal domain is composed by arcs of cercles whose radius changes depending on the mutual position of $\partial\textsf{M}$ and the boundary of an optimal ball. That is $\partial\textsf{M}$ changes curvature at the intersection points
of the boundary with the optimal circle, and hence we do not need to check that $B_1$ and $B_2$ are indeed  optimal balls.
More precisely, we are now looking for the best domain, namely the best parameters $\alpha,\theta,x_0$ for the
ratio $\delta/\lambda^2$ with the constraint $A(\textsf{M})=\pi$.
It turns out that the area is a quadratic polynomial in $x_0$, therefore, the constraint $A(\textsf{M})=\pi$
allows us to eliminate the variable $x_0$, by expressing it as a function of $\theta,\alpha$.
By construction, the three parameters must satisfy
$$0\leq \alpha\leq \frac{\pi}{2},\quad -x_0+\cos\theta\geq 0 \quad\alpha+\theta \leq \frac{\pi}{2}$$
the second inequality expresses the fact that the point $A$ must be in the first quadrant and the third
one that the radius $R_3\leq 1$, otherwise the arc $\gamma_3$ would not be outside the ball $B_1$. Thus
the second inequality just means that we will look for the root of the quadratic which is between $0$
and $\cos\theta$.

Finally, the problem reduces to minimize the function of two variables $J(\alpha,\theta):=(P(\textsf{M})-2\pi)/\lambda^2(\textsf{M})$
where $P(\textsf{M})$ and $\lambda(\textsf{M})$ are defined respectively in (\ref{perop}) and (\ref{lambdop}) and
$x_0$ is expressed by $A(\textsf{M})=\pi$ with $A(\textsf{M})$ defined in (\ref{aireop}). 
We just have to assume
$0\leq\alpha, 0\leq\theta, \alpha+\theta\leq \frac{\pi}{2}$. 
{We observe that in the configuration where  the sign of the  curvatures inside the optimal balls is opposite, the parameters satisfy $0\leq \alpha\leq \pi/2$ and $0\leq \theta\leq \pi$.}

A numerical computation provides the explicit values
of the optimal parameters; Figure \ref{optimal-domain} has been drawn by using such values.
For the corresponding  set the value of the functional $\mathcal{F}$ is approximately $0.3931$. This entails that
the optimal set for $\mathcal{F}$ cannot be convex by Theorem \ref{thmAFN}.

\begin{conj}\label{conj2}
The value of the optimal constant is $c^*=2.5436249$ and the set which saturates  is the ``mask'' $\textsf{M}_0$
described above with the following values of the parameters:
$$\alpha=0.2686247,\ \theta=0.5285017,\ x_0=0.3940769.$$
The value of $\mathcal{F}$ for the set $\textsf{M}_0$ is $1/c^*=0.3931397$.
\end{conj}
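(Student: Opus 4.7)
The plan is to split the argument into a structural reduction and a finite-dimensional optimization, with all the real difficulty concentrated in the structural step, which is precisely Conjecture \ref{conj1}.

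First, assuming Conjecture \ref{conj1}, I would set up coordinates so that the two perpendicular axes of symmetry of $\Omega_0$ coincide with the coordinate axes and the two optimal balls $B_1, B_2$ are $B_{(x_0,0)}$ and $B_{(-x_0,0)}$ with $x_0 \in (0,1)$. By Proposition \ref{thmqualit}(1)--(2) the boundary $\partial\Omega_0$ is of class $C^{1,1}$ and consists of arcs of circles of a single radius on each connected component of $\R^2 \setminus (\partial B_1 \cup \partial B_2)$. In the first quadrant this yields exactly three arcs $\gamma_1,\gamma_2,\gamma_3$ of radii $R_1,R_2,R_3$ as in the figure before the conjecture. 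The $C^1$ matching at the points $A \in \partial B_2$ and $B \in \partial B_1$ forces the triples $(O_1,A,O_2)$ and $(O_2,O_3,B)$ to be collinear, and the identity $y_A = y_B$ provided by Proposition \ref{cdt_optimalite} is automatic by symmetry. Elementary trigonometry then shows that $\Omega_0$ is determined by the three parameters $(\alpha,\theta,x_0)$ together with the closed form expressions for $R_i$ and $O_i$ already written before the conjecture.

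Next I would invoke the explicit formulas \eqref{perop}, \eqref{aireop}, \eqref{lambdop} for $P(\textsf{M})$, $A(\textsf{M})$ and $\lambda(\textsf{M})$. Since $A(\textsf{M})$ is quadratic in $x_0$, the constraint $A(\textsf{M}) = \pi$ can be solved in closed form for the unique root $x_0 = x_0(\alpha,\theta) \in (0,\cos\theta)$, reducing the problem to minimizing
\[
J(\alpha,\theta) \,=\, \frac{P(\textsf{M}) - 2\pi}{\lambda^2(\textsf{M})}
\]
over the compact admissible region $\{\alpha \ge 0,\ \theta \ge 0,\ \alpha + \theta \le \pi/2\}$. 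The boundary cases are ruled out directly: $\alpha = 0$ or $\theta = 0$ produces a convex (stadium-type) competitor, for which Theorem \ref{thmAFN} gives $J \ge 0.4056 > 0.3932$; the case $\alpha + \theta = \pi/2$ forces $R_3 = \infty$ and is easily excluded. Hence any minimizer is an interior critical point. I would then solve $\nabla J = 0$; numerically one finds exactly one critical point $(\alpha^*,\theta^*) = (0.2686247,\,0.5285017)$, at which $x_0(\alpha^*,\theta^*) = 0.3940769$ and $J(\alpha^*,\theta^*) = 0.3931397$. To make this rigorous I would use certified interval arithmetic to verify uniqueness of the critical point on a fine grid and positive definiteness of the reduced Hessian at this point.

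The main obstacle, by far, is Conjecture \ref{conj1}. The fact that $\Omega_0$ is connected and enjoys two perpendicular axes of symmetry is plausible but genuinely hard: none of the known rearrangements (Steiner, Bonnesen, or the four-arc symmetrization of Definition \ref{rearrangement}) simultaneously preserves $\lambda$ and decreases $\delta$ without \emph{a priori} information on an optimal ball, so one cannot simply symmetrize a minimizer. The claim that $\Omega_0$ has \emph{exactly} two optimal balls is a refinement of Proposition \ref{thmqualit}(\ref{4}), and ruling out three or more optimal balls seems to require a substantial extension of the case analysis of Section \ref{subsection_number_optimal_balls}, combined with the first and second order shape-derivative conditions derived there. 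Once these structural facts are established, what remains is the essentially computational finite-dimensional argument above.
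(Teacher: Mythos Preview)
Your plan is essentially the same as the paper's own discussion in Section~\ref{section-conjecture}: reduce to the three-parameter mask family via Conjecture~\ref{conj1} and Proposition~\ref{thmqualit}(1)--(2), use \eqref{perop}--\eqref{lambdop}, eliminate $x_0$ through the quadratic area constraint, and minimize $J(\alpha,\theta)$ numerically. The paper does not prove Conjecture~\ref{conj2} either; it presents exactly this conditional reduction and reports the numerical values, so your proposal matches both the strategy and the honest identification of Conjecture~\ref{conj1} as the genuine obstruction.

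One small correction: at the boundary $\alpha+\theta=\pi/2$ you do not get $R_3=\infty$ but $R_3=\sin\theta/\cos\alpha=1$, which is precisely the threshold at which the arc $\gamma_3$ ceases to lie outside $B_1$ (cf.\ the paragraph following \eqref{lambdop}). This does not affect the argument, since that boundary configuration is still non-competitive, but the stated reason is wrong. Your suggestion to certify the two-variable minimization by interval arithmetic is a sensible addition that the paper does not spell out.
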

%%%%%%%%%%%%%%%%%%%%%%%%%%%%%%%%%
%%%%%%%%%%%%%%%%%%%%%%%%%%%%%%%%
%%%%%%%%%%%%%%%%%%%%%%%%%%%%%%%%%%%
%%%%%%%%%%%%%%%%%%%%%%%%%%%%%%%%%%%%%%

\section*{Acknowledgements}
We thank B. Kawohl for very useful discussions on the topic of this paper, in particular we thank him for having suggested the idea of Proposition \ref{centre_optimizing_ball}.

This work started while CB was at the Institut Elie Cartan Nancy supported by the ANR CNRS project GAOS (Geometric Analysis of Optimal Shapes), and the research group INRIA CORIDA (Contr\^ole robuste infini-dimensionnel et applications). 
The research of CB is supported by the Fir Project  2013 ``Geometrical and Qualitative Aspects of PDEs''.
The work of GC was partially done during her ``d\'el\'egation CNRS'' at University of Lorraine.
AH is supported by the project ANR-12-BS01-0007-01-OPTIFORM {\it Optimisation de formes} financed by the French Agence Nationale de la Recherche (ANR). 
The three authors have been supported by the Fir Project 2013 ``Geometrical and Qualitative Aspects of PDEs'' in their visitings.

All these institutions are gratefully acknowledged.

%%%%%%%%%%%%%%%%%%%%%%%%%%%%%%%%%%%
%%%%%%%%%%%%%%%%%%%%%%%%%%%%%%%%%%%%
%%%%%%%%%%%%%%%%%%%%%%%%%%%%%%%%%%%%%
%%%%%%%%%%%%%%%%%%%%%%%%%%%%%%%%%%%%%%


\begin{thebibliography}{10}


\bibitem{AFN} {A. Alvino, V. Ferone, C. Nitsch},
A sharp isoperimetric inequality in the plane. J. Eur. Math. Soc. (JEMS) 13 (2011), 185-206.


\bibitem{AFN2}
{A. Alvino, V. Ferone, C. Nitsch},
A sharp isoperimetric inequality in the plane involving Hausdorff distance. Atti Accad. Naz. Lincei Cl. Sci. Fis. Mat. Natur. Rend. Lincei  Mat. Appl. 20 (2009),  397-412.

\bibitem{Ca}{S. Campi},
Isoperimetric deficit and convex plane sets of maximum translative discrepancy. Geom. Dedicata 43 (1992), 71-81.


\bibitem{Ca2} S. Campi,
Three-dimensional Bonnesen type inequalities. Matematiche (Catania) 60 (2005),  425-431.

\bibitem{Ci-Le} {M. Cicalese, G. P. Leonardi}, A selection principle for the sharp quantitative isoperimetric inequality.
Arch. Ration. Mech. Anal. 206 (2012),  617-643.

\bibitem{CiLe2} M. Cicalese, G. P. Leonardi,
Best constants for the isoperimetric inequality in quantitative form.
J. Eur. Math. Soc. (JEMS) 15 (2013), 1101-1129.

\bibitem{FiMP}  A. Figalli, F. Maggi, A. Pratelli, A mass transportation approach to quantitative isoperimetric inequalities. Invent. Math. 182 (2010),  167-211.

\bibitem{FMP} {N. Fusco, F. Maggi,  A. Pratelli},
The sharp quantitative isoperimetric inequality. Ann. of Math. (2) 168 (2008),  941-980.


\bibitem{Fuscopreprint}
N. Fusco, The quantitative isoperimetric inequality and related topics, preprint 2015.

\bibitem{HHW} R.R. Hall, W.K. Hayman, A.W. Weitsman, On asymmetry and capacity. J. Anal. Math. 56 (1991), 87-123.

\bibitem{HH} R.R. Hall, W.K. Hayman, A problem in the theory of subordination. J. Anal. Math. 60 (1993), 99-111.

\bibitem{HP} {A. Henrot, M. Pierre}, Variation et Optimisation de
forme, une analyse g\'eom\'etrique, Math\'ematiques et Applications,
 {48}, Springer (2005).

% \bibitem{Le} G. P. Leonardi,
%A new approach for studying quantitative isoperimetric inequalities.  Bruno Pini Mathematical Analysis Seminar 2011, 15 pp.,
%Bruno Pini Math. Anal. Semin., 2011, Univ. Bologna, Alma Mater Stud., Bologna, 2011.

\bibitem{LZDJ} G. Li,  X. Zhao, Z. Ding, Zongqi,  R. Jiang,
An analytic proof of the planar quantitative isoperimetric inequality.
C. R. Math. Acad. Sci. Paris 353 (2015), 589-593.

\bibitem{Maggibulletin}
F. Maggi,
Some methods for studying stability in isoperimetric type problems.
Bulletin of the American Mathematical Society 45  (2008), 367-408.


%\bibitem{Ni} C. Nitsch,
%The quantitative isoperimetric inequality for planar convex domains.
%Boll. Unione Mat. Ital. (9) 1 (2008), no. 3, 573-589. 


\end{thebibliography}
\end{document}